\documentclass[11pt,leqno]{article}

\usepackage{amsthm,amsfonts,amssymb,amsmath,oldgerm}
\usepackage{fullpage}
\usepackage{graphicx}
\usepackage{mathrsfs}
\usepackage{xcolor}
\usepackage[colorinlistoftodos]{todonotes} 

\parskip 1ex

\numberwithin{equation}{section}

\renewcommand\d{\partial}
\renewcommand\a{\alpha}
\renewcommand\b{\beta}


\newcommand{\R}{\mathbb R}
\newcommand{\C}{\mathbb C}




\newcommand{\RM}{{\mathbb{R}}}
\newcommand{\CM}{{\mathbb{C}}}
\newcommand{\NM}{{\mathbb{N}}}
\newcommand{\ZM}{{\mathbb{Z}}}
\newcommand{\vt}{\widetilde{v}}
\newcommand{\ri}{\mathrm{i}}
\newcommand{\re}{\mathrm{e}}
\newcommand{\de}{\mathrm{d}}
\newtheorem{theorem}{Theorem}[section]
\newtheorem{proposition}[theorem]{Proposition}
\newtheorem{corollary}[theorem]{Corollary}
\newtheorem{lemma}[theorem]{Lemma}
\newtheorem{remark}[theorem]{Remark}
\theoremstyle{definition}
\newtheorem{definition}[theorem]{Definition}

\allowdisplaybreaks[3]

\title{Nonlinear Modulational Dynamics of Spectrally Stable Lugiato-Lefever Periodic Waves}

\author{Mariana Haragus\thanks{FEMTO-ST Institute, Universit\'e Bourgogne-Franche Comt\'e, 15b avenue des Montboucons, 25030 Besan\c con cedex, France; \texttt{mharagus@univ-fcomte.fr}}
\quad Mathew A. Johnson\thanks{Department of Mathematics, University of Kansas, 1460 Jayhawk Boulevard, Lawrence, KS 66045, USA; \texttt{matjohn@ku.edu}}
\quad Wesley R. Perkins\thanks{Department of Mathematics, Lehigh University, 17 Memorial Drive East, Bethlehem, PA 18015, USA; \texttt{wrp211@lehigh.edu}}
\quad Bj\"orn de Rijk\thanks{Karlsruhe Institute of Technology, Englerstra\ss e 2, 76131 Karlsruhe, Germany;  \texttt{bjoern.de-rijk@kit.edu}}}

\date{\today}

\begin{document}

\maketitle

\begin{abstract}
We consider the nonlinear stability of spectrally stable periodic waves in the Lugiato-Lefever equation (LLE), a damped nonlinear Schr\"odinger equation with forcing that arises in nonlinear optics. So far, nonlinear stability of such solutions has only been established against co-periodic perturbations by exploiting the existence of a spectral gap. In this paper, we consider perturbations which are localized, i.e., integrable on the line.  Such localized perturbations naturally yield the absence of a spectral gap, so we must rely on a substantially different method with origins in the stability analysis of periodic waves in reaction-diffusion systems. The relevant linear estimates have been obtained in recent work by the first three authors through a delicate decomposition of the associated linearized solution operator. Since its most critical part just decays diffusively, the nonlinear iteration can only be closed if one allows for a spatio-temporal phase modulation. 
However, the modulated perturbation satisfies a quasilinear equation yielding an apparent loss of regularity. 
To overcome this obstacle, we incorporate tame estimates on the unmodulated perturbation, which satisfies a semilinear equation in which no derivatives are lost, yet where decay is too slow to close an independent iteration scheme.
We obtain nonlinear stability of periodic steady waves in the LLE against localized perturbations with precisely the same decay rates as predicted by the linear theory.
\end{abstract}


\section{Introduction}

We consider the nonlinear stability and asymptotic behavior of periodic steady waves in the Lugiato-Lefever equation (LLE)
\begin{equation}\label{e:LLE}
\partial_t \psi=-\ri\beta\psi_{xx}-(1+\ri\alpha)\psi+\ri|\psi|^2\psi+F,
\end{equation}
with parameters $\alpha,\beta \in \RM$ and $F > 0$. The unknown $\psi=\psi(x,t)$ in~\eqref{e:LLE} is a complex-valued function depending on the temporal variable $t\in\R$ and the spatial variable $x\in\R$. The LLE was derived in 1987 from Maxwell's equations in~\cite{LL87} as a model to study pattern formation within the optical field in a dissipative and nonlinear cavity filled with a Kerr medium and subjected to a continuous laser pump.  In that context, $\psi(x,t)$ represents the field envelope, $\alpha>0$ is a detuning parameter, $|\beta|=1$ is a dispersion parameter, and $F>0$ represents a normalized pump strength. Note that the case $\beta=1$, corresponding to a defocusing nonlinearity, is referred to as the ``normal" dispersion case while $\beta=-1$, corresponding to a focusing nonlinearity, is referred to as the ``anomalous" dispersion case. More recently, the LLE has become a model for high-frequency combs generated by microresonators in periodic optical waveguides, and as such has become the subject of intense study in the physics literature; see, for example,~\cite{CGTM17} and references therein.

Until recently, however, there have been  relatively few mathematically rigorous studies of the Lugiato-Lefever equation~\eqref{e:LLE}.  The main mathematical questions raised by the physical problem concern the existence, dynamics and stability of both periodic and localized stationary solutions. Obtaining periodic stationary solutions $\psi(x,t)=\phi(x)$ of the LLE~\eqref{e:LLE} boils down to finding periodic solutions of the associated profile equation
\begin{equation}\label{e:profile}
\ri\beta\phi'' = -(1+\ri\alpha)\phi + \ri|\phi|^2\phi+F.
\end{equation}
This has been carried out using a variety of methods, including local bifurcation theory~\cite{DH18_2,DH18_1,Go17,MOT1}, global bifurcation theory~\cite{MR17}, and perturbative arguments~\cite{HSS19}. Clearly, such solutions are smooth as~\eqref{e:profile} corresponds to a spatial dynamical system in $\phi$ with smooth nonlinearity.

In this work, we are interested in the \emph{nonlinear stability} of these periodic steady waves against small perturbations which are \emph{localized}, i.e., are integrable on the line, complementing the linear stability analysis carried out in~\cite{HJP20}. To this end, let $\phi$ be a $T$-periodic stationary solution of the LLE~\eqref{e:LLE} and decompose $\phi=\phi_r+ \ri \phi_i$ into its real and imaginary parts. We capture the local dynamics about $\phi$ by considering the perturbed solution $\psi(x,t)=\phi(x)+\vt(x,t)$ of~\eqref{e:LLE}. Writing the perturbation as $\vt=\vt_r+ \ri \vt_i$, we find that the real functions $\vt_r$ and $\vt_i$ satisfy the system
\begin{equation}
\partial_t\left(\begin{array}{c}\vt_r\\\vt_i\end{array}\right)=\mathcal{A}[\phi]\left(\begin{array}{c}\vt_r\\\vt_i\end{array}\right)+\mathcal{N}(\vt), \label{e:pert00}
\end{equation}
where here $\mathcal{N}(\vt)$ is at least quadratic in $\vt$ and $\mathcal A[\phi]$ is the matrix differential operator
\begin{equation}\label{e:Aphi}
\mathcal A[\phi]=- I+\mathcal{J}\mathcal{L}[\phi],
\end{equation}
with
\[
\mathcal{J}=\left(\begin{array}{cc}0&-1\\1&0\end{array}\right),\qquad
\mathcal{L}[\phi] = \left(\begin{array}{cc} -\b \d_x^2 - \a  + 3\phi_{r}^2 + \phi_{i}^2 & 2\phi_{r}\phi_{i} \\
  2\phi_{r}\phi_{i} & -\b \d_x^2 - \a  + \phi_{r}^2 + 3\phi_{i}^2\end{array}\right).
\]

\subsection{Spectral Stability Assumptions}

Naturally, the local dynamics of~\eqref{e:LLE} about the periodic steady wave $\phi$ are heavily influenced by the spectrum of the linearization $\mathcal A[\phi]$. As we are considering localized perturbations, we consider $\mathcal{A}[\phi]$ as a linear differential operator on the Hilbert space $L^2(\R)$ with dense domain $H^2(\R)$.\footnote{Throughout, we will suppress the co-domain and simply write $L^2(\RM)$ or $H^2(\R)$ instead of $L^2(\R,\C^2)$ or $H^2(\R,\C^2)$, and similarly for all other Lebesgue or Sobolev spaces.} Since $\mathcal{A}[\phi]$ has periodic coefficients, standard Floquet-Bloch theory implies that its spectrum as an operator acting on $L^2(\R)$ is entirely essential and comprised of a countable union of continuous curves which, thanks to the spatial translation invariance of~\eqref{e:LLE}, necessarily touches the imaginary axis at the origin. This spectral feature makes the stability analysis for localized perturbations significantly different and more challenging than for two other types of perturbations which are naturally considered for periodic waves, namely co-periodic perturbations, which are periodic with period equal to the one of the steady wave, and subharmonic perturbations, whose period is an integer multiple of the period of the background wave. The spectrum associated with co-periodic and subharmonic perturbations is discrete and the translational eigenvalue at the origin can be separated from the rest of the spectrum. Yet, in our setting of localized perturbations, the best one can hope for is that the spectrum is confined to the open left half-plane except for a single critical curve touching the origin in a quadratic tangency, which leads to the following definition.

\begin{definition}\label{Def:spec_stab} Let $T > 0$. A smooth $T$-periodic stationary solution $\phi$ of~\eqref{e:LLE} is said to be \emph{diffusively spectrally stable} provided the following conditions hold:
\begin{enumerate}
\item the spectrum of the linear operator $\mathcal{A}[\phi]$ given by~\eqref{e:Aphi} and acting on $L^2(\R)$ satisfies \[\sigma(\mathcal{A}[\phi])\subset\{\lambda\in\CM:\Re(\lambda)<0\}\cup\{0\};\]
\item there exists $\theta>0$ such that for any $\xi\in[-\pi/T,\pi/T)$ the spectrum of the Bloch operator $\mathcal{A}_\xi[\phi]:= \mathcal{M}_\xi^{-1} \mathcal A[\phi] \mathcal{M}_\xi$, acting on $L^2_{\mathrm{per}}(0,T)$, satisfies
  \[\Re\,\sigma(\mathcal{A}_\xi[\phi])\leq-\theta \xi^2,\]
  where here $\mathcal{M}_\xi$ denotes the multiplication operator $\left(\mathcal{M}_\xi f\right)(x) = \re^{\ri \xi x} f(x)$.
\item $\lambda=0$ is a simple eigenvalue of the Bloch operator $\mathcal{A}_0[\phi]$, and the derivative $\phi' \in L^2_{\mathrm{per}}(0,T)$ of the periodic wave is an associated eigenfunction.
\end{enumerate}
\end{definition}

Since the pioneering work of Schneider~\cite{S96,S98_1,S98_2}, the above spectral stability assumption has been standard in the analysis of periodic traveling or steady waves in dissipative systems. It has been shown~\cite{DSSS,JNRZ_13_1,JNRZ_13_2,SSSU} to imply important properties regarding the nonlinear dynamics against localized, or general bounded, perturbations, including long-time dynamics of the associated modulation functions. Moreover, extensions of this to systems with more symmetries (hence more spectral curves passing through the origin) are regularly used; see, for example,~\cite{BJNRZ_13,JNRZ_Invent}.

The existence of diffusively spectrally stable periodic steady waves in the LLE~\eqref{e:LLE} was established in~\cite{DH18_1} using local bifurcation theory. Such waves were found in parameter regimes of anomalous dispersion, which were investigated in the original work of Lugiato and Lefever~\cite{LL87}; see Remark~\ref{rem:specstab} directly below for further details.

\begin{remark}{\label{rem:specstab} \upshape
Let $\beta = -1$ and fix $\alpha < 41/30$ in~\eqref{e:LLE}. Upon setting $F_1^2=(1-\alpha)^2+1$, it was shown in~\cite{DH18_1} that there exists $\mu_0>0$ such that for any $\mu\in(0,\mu_0)$ the LLE~\eqref{e:LLE} has at parameter value $F^2=F_1^2+\mu$ an even periodic and smooth steady solution with Taylor expansion
\[
\phi_\mu(x)=\phi^*+\frac{3(\alpha+\ri(2-\alpha))}{F_1\sqrt{41-30\alpha}}\,\cos\left(\sqrt{2-\alpha}\,x\right)\sqrt{\mu}+\mathcal{O}(\mu),
\]
where $\phi^* \in \R$ satisfies the algebraic equation
\[
(1+\ri\alpha)\phi-\ri\phi|\phi|^2=F_1.
\]
These solutions are $T$-periodic with period $T=2\pi/\sqrt{2-\alpha}$, and are diffusively spectrally stable in the sense of Definition~\ref{Def:spec_stab}. We note that $\alpha_c=41/30$ was already identified as an instability threshold in the original work of Lugiato and Lefever~\cite{LL87}.
}
\end{remark}

\subsection{Main Result}

We state our main result, which establishes nonlinear stability of diffusively spectrally stable periodic steady waves in the LLE~\eqref{e:LLE} against localized perturbations.

\begin{theorem} \label{t:Loc_NonLinStab}
Let $T > 0$ and suppose $\phi$ is a smooth $T$-periodic steady solution of~\eqref{e:LLE} that is diffusively spectrally stable.\footnote{These hypotheses on $\phi$ are made throughout the whole paper.} Then, there exist constants $\varepsilon, M > 0$ such that, whenever $v_0\in L^1(\RM) \cap H^4(\RM)$ satisfies
\[
E_0:=\left\|v_0\right\|_{L^1\cap H^4}<\varepsilon,
\]
there exist functions
\[\vt, \gamma \in C\big([0,\infty),H^4(\R)\big) \cap C^1\big([0,\infty),H^2(\R)\big),\]
with $\vt(0) = v_0$ and $\gamma(0) = 0$ such that $\psi(t)=\phi+\vt(t)$ is the unique global solution of~\eqref{e:LLE} with initial condition $\psi(0)=\phi + v_0$, and the inequalities
\[
\max\left\{\left\|\psi(t) - \phi\right\|_{L^2},\left\|\gamma(t)\right\|_{L^2}\right\}\leq ME_0 (1+t)^{-\frac{1}{4}},
\]
and
\[
\max\left\{\left\|\psi\left(\cdot-\gamma(\cdot,t),t\right)-\phi\right\|_{L^2},\left\|\partial_x \gamma(t)\right\|_{H^3}, \left\|\partial_t \gamma(t)\right\|_{H^2}\right\} \leq ME_0 (1+t)^{-\frac{3}{4}},
\]
hold for all $t\geq 0$.
\end{theorem}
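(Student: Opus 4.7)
The plan is to adapt the by-now-standard approach to diffusive stability of periodic waves pioneered by Schneider and developed in, e.g.,~\cite{JNRZ_13_1,JNRZ_13_2,SSSU}, using as fundamental input the sharp linear estimates proved in~\cite{HJP20}. The starting point is the decomposition, obtained there, of the linearized semigroup $\re^{\mathcal{A}[\phi]t}$ into a \emph{critical} part that at leading order acts as $\phi'(x)\,[\mathfrak p(t)v_0](x)$, for a scalar ``diffusive'' operator $\mathfrak p(t)$ satisfying heat-kernel-type bounds
\[
\|\mathfrak p(t) v_0\|_{L^2}\lesssim (1+t)^{-1/4}\|v_0\|_{L^1\cap L^2},\qquad \|\partial_x\mathfrak p(t) v_0\|_{L^2}\lesssim (1+t)^{-3/4}\|v_0\|_{L^1\cap L^2},
\]
plus an exponentially decaying remainder $\tilde S(t)$. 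The factor of $\phi'$ reflects spatial translation invariance: the most slowly decaying piece of $\re^{\mathcal{A}[\phi]t}v_0$ is, at leading order, a pure spatial translation of $\phi$ and therefore does not by itself yield $L^2$ decay for $\vt$.

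To exploit this structure I would introduce a spatio-temporal phase modulation $\gamma(x,t)$ and study the \emph{modulated} perturbation $\mathring v(x,t):=\psi(x-\gamma(x,t),t)-\phi(x)$, defining $\gamma$ implicitly through the Duhamel-type fixed-point equation
\[
\gamma(t)=-\mathfrak p(t) v_0-\int_0^t \mathfrak p(t-s)\,\mathcal N_\gamma\big(\vt(s),\gamma(s)\big)\,\de s,
\]
tailored so that, in the representation formula
\[
\vt(t)=\re^{\mathcal A[\phi]t}v_0+\int_0^t \re^{\mathcal A[\phi](t-s)}\mathcal N(\vt(s))\,\de s,
\]
the $\phi'\cdot\mathfrak p(t)v_0$ contribution is absorbed into the translate by $\gamma$. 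The resulting integral equation for $\mathring v$ then sees only the fast-decaying remainder $\tilde S(t)$ and spatial derivatives of the critical part, and the nonlinear terms $\mathcal N$, $\mathcal N_\gamma$ are at least quadratic in $(\vt,\partial_x\gamma,\partial_t\gamma)$ modulo harmless commutators produced by the composition $x\mapsto x-\gamma(x,t)$.

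To close the iteration I would introduce the template
\[
\eta(t):=\sup_{0\le s\le t}\Big[(1+s)^{1/4}\big(\|\vt(s)\|_{L^2}+\|\gamma(s)\|_{L^2}\big)+(1+s)^{3/4}\big(\|\mathring v(s)\|_{L^2}+\|\partial_x\gamma(s)\|_{H^3}+\|\partial_t\gamma(s)\|_{H^2}\big)\Big],
\]
and establish the a priori bound $\eta(t)\lesssim E_0+\eta(t)^2$ by combining the linear estimates above with standard bilinear and Moser-type nonlinear bounds; a continuous-induction argument then delivers global existence and the claimed decay rates as soon as $E_0$ is sufficiently small. Local well-posedness in $H^4$, which is needed to start the bootstrap, follows from the semilinear structure of~\eqref{e:LLE}.

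The main obstacle, as flagged in the abstract, is that the equation satisfied by $\mathring v$ is \emph{quasilinear}: composition with $x\mapsto x-\gamma(x,t)$ produces a factor $(1-\partial_x\gamma)^{-2}$ in front of $\partial_x^2\mathring v$ and similar quasilinear corrections, so an independent energy estimate for $\mathring v$ would lose a derivative. My resolution is to run a coupled bootstrap that also uses the \emph{semilinear} structure of~\eqref{e:pert00} for the unmodulated perturbation $\vt$: tame $H^4$ estimates on $\vt$, obtained from its own Duhamel formula at the cost of the slower $(1+t)^{-1/4}$ decay, supply the high-regularity control needed to bound the quasilinear commutator terms in the $\mathring v$-equation, while the faster $(1+t)^{-3/4}$ decay of $(\mathring v,\partial_x\gamma,\partial_t\gamma)$ controls the critical quadratic contributions in the $\vt$-estimate. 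The delicate technical point is precisely the simultaneous balancing of these two hierarchies—high regularity with slow decay for $\vt$, lower regularity with faster decay for $\mathring v$ and $\gamma$—within a single closed fixed-point scheme, yielding at the end the sharp decay rates stated in the theorem and matching the linear prediction of~\cite{HJP20}.
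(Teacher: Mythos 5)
Your high-level plan coincides with the paper's: use the HJP20 decomposition of the semigroup into a critical part $\phi'\,s_p(t)$ plus a faster-decaying remainder, introduce the spatio-temporal modulation $\gamma$ via a Duhamel fixed-point definition tailored to cancel the $\phi'\,s_p(t)v_0$ contribution, recognize the quasilinear loss of derivatives, and close a coupled bootstrap that also tracks the semilinear unmodulated perturbation $\vt$. However, there are two concrete gaps that would prevent the sketch from closing as written.

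First, your template $\eta(t)$ omits exactly the quantities that make the coupling work. The paper's template must additionally carry $\|\vt_x\|_{L^2}$, $\|\vt_{xx}\|_{L^2}$, $\|\vt_{xxx}\|_{L^2}$, $\|\vt_{xxxx}\|_{L^2}$ and $\|v_x\|_{L^2}$, $\|v_{xx}\|_{L^2}$ with carefully chosen (and, crucially, \emph{algebraically growing}) temporal weights: the Duhamel estimate for $\vt_{xxxx}$ only gives $\|\vt_{xxxx}(t)\|_{L^2}\lesssim (E_0+\eta(t)^2)(1+t)^{1/4}$, i.e.\ growth, not ``slower decay'' as you write, and interpolation then fills in the intermediate derivatives with exponents $\pm 1/8$, $0$. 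These tame bounds are what the mean value inequalities convert into control on $v_x,v_{xx}$, and without them in $\eta$ the fixed point cannot even be stated. Second, you do not mention the integration-by-parts identities that move derivatives off $v$ onto the smoothing low-frequency parts $s_p$ and $\widetilde S_c$ of the semigroup. These are essential, not cosmetic: the nonlinearity $\partial_x\mathcal R$ is linear in $v_{xx}$, and if one only bounds it via $\|v_{xx}\|_{L^2}=\mathcal O(1)$ the resulting convolution $\int_0^t(1+t-s)^{-3/4}(1+s)^{-3/4}\,\de s$ yields only $(1+t)^{-1/2}$, short of the claimed sharp rate $(1+t)^{-3/4}$; the integration by parts replaces the $\|v_{xx}\|_{L^2}$ factor by $\|v\|_{L^2}=\mathcal O((1+s)^{-3/4})$, restoring the $(1+s)^{-3/2}$ source needed. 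Finally, a smaller but real point: the $s_p(t)$ used to define $\gamma$ must carry a temporal cut-off $\chi(t)$ vanishing near $t=0$ so that $\gamma(0)=0$ and $\partial_t\gamma$ inherits the regularity required for the local theory in $H^4\times H^2$; this is not addressed in the proposal.
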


Theorem~\ref{t:Loc_NonLinStab} is the first nonlinear stability result for $T$-periodic steady waves in the LLE~\eqref{e:LLE} against localized perturbations. So far, nonlinear (in)stability of such solutions has only been established against co-periodic perturbations with the aid of standard orbital stability techniques, which exploit the presence of a spectral gap and lead to exponential decay of the perturbed solution to a time-dependent phase modulation of the periodic wave; see~\cite{DH18_2,MOT1,MOT2,SS19}.\footnote{The extension of these works to $NT$-periodic, i.e., subharmonic, perturbations with arbitrary but fixed $N>1$, however, is straightforward.} In contrast, Theorem~\ref{t:Loc_NonLinStab} establishes \emph{algebraic} decay of the perturbed solution to a \emph{spatio}-temporal phase modulation of the underlying wave. That is, if $\psi$ is a solution of~\eqref{e:LLE} which is initially close in $L^1(\RM)\cap H^4(\RM)$ to the periodic steady wave $\phi$, then there exists a phase function $\gamma(x,t)$ such that for large time $\psi$ should behave approximately like
\begin{equation}\label{e:loc_mod_approx}
\psi(x,t)\approx \phi(x) + \gamma(x,t)\phi'(x)\approx \phi(x+\gamma(x,t)),\qquad t\gg 1.
\end{equation}

We note that \emph{linear} stability of periodic steady waves in the LLE against localized perturbations has been established in preliminary work by the first three authors; see~\cite{HJP20} and~\S\ref{sec:linstab}. Comparing the linear and nonlinear results one finds that the algebraic decay rates in Theorem~\ref{t:Loc_NonLinStab} are optimal in the sense that they coincide with the sharp rates obtained in the linear result in~\cite{HJP20}. On the other hand, Theorem~\ref{t:Loc_NonLinStab} has stronger regularity assumptions than the linear result in~\cite{HJP20}. The choice of regularity is an artifact of our method and is motivated in Remarks~\ref{rem:regularity}. While we expect that it is possible to allow for less regular initial data, we emphasize the focus of this paper is not to obtain optimal regularity with respect to localized perturbations, but rather to introduce a working scheme.

The absence of a spectral gap in our case of localized perturbations renders our approach to proving Theorem~\ref{t:Loc_NonLinStab} substantially different from those for co-periodic or subharmonic perturbations. We rely on the methodologies developed by Johnson et al.~for the nonlinear stability analysis of periodic traveling waves in reaction-diffusion systems and systems of viscous conservation laws; see~\cite{JNRZ_13_1,JNRZ_Invent,JZ10,JZ_11_1,JZ_11_2}. The relevant linear estimates have already been obtained in~\cite{HJP20} by decomposing the linear solution operator $\re^{\mathcal{A}[\phi]t}$ in a high-frequency part, exhibiting exponential decay, and a low-frequency part, which decays algebraically and accounts for the critical translational mode. However, the \emph{nonlinear} analyses in the aforementioned works of Johnson et al.~seem to not fully extend to the current setting of the LLE. In particular, standard techniques exploiting total parabolicity of the equation are seemingly not available to compensate for an apparent loss of derivatives experienced in the associated nonlinear iteration scheme.

Let us explain how this loss of regularity arises. Since the low-frequency part of the linear solution operator just decays diffusively, the nonlinear iteration can only be closed if one accommodates its translational behavior by allowing for a spatio-temporal phase modulation $\gamma(x,t)$ leading to the ``modulated perturbation"
\begin{equation*}
v(x,t) = \psi(x-\gamma(x,t),t)-\phi(x).
\end{equation*}
As shown in Section \ref{sec:mod_pert}, this yields a quasilinear perturbation equation of the form
\begin{equation}\label{e:pert0}
\left(\partial_t-\mathcal{A}[\phi]\right)\left(v+\gamma\phi'\right) = N\left(v,\gamma_x v_x,\gamma_{xx} v_x, \gamma_t v_x, \gamma_x v_{xx},\gamma_t,\gamma_x,\gamma_{xx},\gamma_{xxx},\gamma_{xt}\right),
\end{equation}
where $N$ is some nonlinear function. Attempting to control the norm of the right-hand-side of~\eqref{e:pert0} in, for example, $H^1$ naturally requires control over the perturbation $v$ in $H^3$.

 In several previous works this loss of derivatives was compensated by using so-called \emph{nonlinear damping estimates}, which are $L^2$-energy estimates of the form
\begin{align}
\partial_t E(t) \leq -\eta E(t) + C\|v(t)\|_{L^2}^2, \label{e:damping} 
\end{align}
where $\eta,C$ are positive constants and $E(t)$ is an energy controlling the norm $\|\partial_x^k v(t)\|_{L^2}^2$ for some $k \in \mathbb{N}$. Integrating this differential inequality yields
\begin{align*} 
E(t) \leq \re^{-\eta t} E(0) + C\int_0^t \re^{-\eta(t-s)} \|v(s)\|_{L^2}^2 \de s,
\end{align*}
which effectively controls the $H^k$-norm of the modulated perturbation $v$ in terms of its $L^2$-norm and the $H^k$-norm of the initial perturbation $v(0)$, thereby allowing one to regain the lost regularity and potentially close the iteration scheme. Although nonlinear damping estimates can be obtained without much effort if the equation is totally parabolic, see for instance~\cite[Proposition~2.5]{JNRZ_13_1} or~\cite[Proposition~4.5]{JZ_11_1}, their existence in other contexts is not guaranteed and, in general,  their derivation could be tedious and lengthy: see, for instance, the delicate analyses~\cite[Appendix~A]{JZN},~\cite[Section~5]{MS04} and~\cite{RZ16} in the case of hyperbolic-parabolic systems.

Despite the linear damping term $-\psi$ present in~\eqref{e:LLE}, we were unable to establish a nonlinear damping estimate of the form~\eqref{e:damping} for the modulated perturbation in the current setting of the LLE. The main reason is that we do not manage to control the derivatives of $v$ arising in the nonlinearity of~\eqref{e:pert0}: see Remark \ref{rem:damping2} for details.
One can, however, show that the linear damping term is sufficient to establish such a nonlinear damping estimate for the ``unmodulated perturbation"
\[
\widetilde{v}(x,t)=\psi(x,t)-\phi(x)
\]
since, in this case the perturbation $\tilde{v}$ satisfies a semilinear equation.  We refer to Section \ref{sec:unmod_pert} and Appendix \ref{sec:alternative} for more details.

Consequently, in this work we adopt a different approach to address the loss of derivatives in the nonlinear iteration scheme, which circumvents the use of nonlinear damping estimates. 
Specifically, we combine the strategies in the works by Johnson et al.~with a recent method developed by Sandstede \& de Rijk in~\cite{RS18} to establish nonlinear stability of periodic traveling waves in planar reaction-diffusion systems against perturbations which are bounded along a line in $\R^2$ and decay in the distance from this line. Since such non-integrable perturbations prohibit the use of $L^2$-estimates (and thus, in particular, nonlinear damping estimates), the nonlinear analysis in~\cite{RS18} is based on pointwise estimates and their approach to controlling regularity is to incorporate the unmodulated perturbation $\vt(x,t) = 
\psi(x,t) - \phi(x)$ into the nonlinear iteration scheme, which, in our case, satisfies the semilinear equation~\eqref{e:pert00} obtained by setting $\gamma \equiv 0$ in~\eqref{e:pert0}. While the Duhamel's principle based iteration scheme associated to $\vt$ does not experience a loss of derivatives, the associated decay rates of $\vt$ are too slow to close an independent iteration scheme; see Remark~\ref{rem:fail}.  

Nevertheless, our work shows that the proof of Theorem~\ref{t:Loc_NonLinStab} follows by \emph{coupling} the iteration schemes for the modulated and unmodulated perturbations, exploiting a subtle trade-off between smoothing and decay. That is, the algebraically decaying low-frequency part of the evolution semigroup $\re^{\mathcal{A}[\phi]t}$ is infinitely smoothing and thus compensates for a loss of derivatives. In our analysis this is manifested by integration by parts formulas, which move derivatives off factors that may lead to a loss of regularity onto the low-frequency part of the semigroup. On the other hand, since the linearization $\mathcal A[\phi]$ of the weakly dissipative LLE is obviously not a sectorial operator, the high-frequency part of $\re^{\mathcal{A}[\phi]t}$ cannot be infinitely smoothing, yet it exhibits exponential decay, so that tame bounds on the derivatives of the unmodulated perturbation can be used to compensate for the loss of derivatives and close the nonlinear iteration.

\begin{remark} \label{rem:NLdamping}
{\upshape
In the proof of Theorem~\ref{t:Loc_NonLinStab} we obtain tame bounds on the unmodulated perturbation through iterative estimates on its Duhamel 
formulation, coupled with bounds on the modulated perturbation.
However, since equation~\eqref{e:pert00} is semilinear and inherits the linear damping term $-\vt$ from the LLE, it is not suprising that a nonlinear damping estimate of the form~\eqref{e:damping} can be obtained for the unmodulated perturbation. As outlined in Appendix~\ref{sec:alternative}, this nonlinear damping estimate (again, coupled with bounds on the modulated perturbation) yields an alternative to establish tame bounds on the unmodulated perturbation. We emphasize once again that we were unable to extend this nonlinear damping estimate to the \emph{modulated} perturbation equation, cf.~Remark~\ref{rem:damping2}.}
\end{remark}

\subsection{Outline of Paper}

In Section~\ref{sec:prelim} we review several preliminary results, including Floquet-Bloch theory and the characterization of the spectrum of $\mathcal{A}[\phi]$ in terms of the one-parameter family of Bloch operators $\mathcal{A}_\xi[\phi]$. In Section~\ref{sec:linear_est} we collect and extend the relevant linear results obtained in~\cite{HJP20}. That is, we decompose the semigroup $\re^{\mathcal{A}[\phi]t}$ in low- and high-frequency parts, state associated $L^2$- and $H^m$-estimates and establish integration by parts formulas. In Section~\ref{sec:nonl_it}, we detail the construction of our coupled iteration scheme, as well as explain our general strategy for compensating for the resulting loss of derivatives. Finally, Section~\ref{sec:nonlinearstab} is devoted to the proof of Theorem~\ref{t:Loc_NonLinStab}, and we include in Appendix~\ref{app:local} some details of the local existence and regularity theory utilized in our nonlinear analysis.

\subsection{Discussion and Outlook} \label{sec:outlook}

This work establishes the first nonlinear stability result of steady $T$-periodic waves in the LLE~\eqref{e:LLE} against localized perturbations, underlining their robustness, i.e., they are stable against larger classes of perturbations than only the co-periodic or subharmonic ones. Our work indicates that the methodology developed by Johnson et al.~for reaction-diffusion systems and systems of viscous conservation laws can be extended to wide classes of semilinear evolution equations, even to those that do not admit nonlinear damping estimates to control higher-order derivatives. In fact, we expect that our approach works in the semilinear setting as long as the linearization about the periodic wave generates a semigroup, which can be decomposed in low- and high-frequency part, where the diffusive low-frequency part is infinitely smoothing and the high-frequency part decays exponentially. Here, one could extend the class of initial data by looking at modulated initial conditions of the form
\[\psi(x,0) = \phi(x - \gamma_0(x)) + v_0(x),\]
where the phase off-set $\gamma_0$ might be nonlocalized as in~\cite{JNRZ_13_1,JNRZ_13_2}. \

We also expect that our method could substantially improve nonlinear stability results for periodic waves of the LLE (and for semilinear equations more generally) to subharmonic perturbations, i.e., $NT$-periodic perturbations with $N \in \mathbb{N}$. For example, we point out in the case of the LLE that current techniques~\cite{SS19} exploit the presence of a spectral gap yielding stability results, which are not uniform in $N$, since the exponential decay rate and the allowable size of initial perturbations are both controlled by the size of the spectral gap which tends to zero as $N\to\infty$; see~\cite{HJP20}
and also \cite{JP21,JP22}. We aim, through an extension of the stability theory for localized perturbations presented in this paper, to establish a nonlinear stability result against subharmonic perturbations, which is uniform in $N$; see the forthcoming work~\cite{HJPR2}.

\medskip

\noindent
{\bf Acknowledgments:}
MH was partially supported by the EUR EIPHI program (Contract No. ANR-17-EURE-0002) and the ISITE-BFC project (Contract No. ANR-15-IDEX-0003).
MJ was partially supported by the National Science Foundation under grant number DMS-2108749, as well as the Simons Foundation Collaboration Grant number 714021.

\section{Preliminaries} \label{sec:prelim}

We begin by reviewing some elements of Floquet-Bloch theory, and then record spectral and semigroup properties of the linearization $\mathcal A[\phi]$ about the smooth $T$-periodic steady wave solution $\phi$ of the LLE~\eqref{e:LLE} under the diffusive spectral stability assumption. These properties are $H^m$-analogues of the ones obtained for $L^2$-spaces in~\cite{HJP20}.

\subsection{Floquet-Bloch Theory}

Floquet-Bloch theory is a standard tool for the analysis of linear differential operators with periodic coefficients. It relies upon a Bloch decomposition of functions $g\in L^2(\R)$,
\begin{equation}\label{e:Bloch}
g(x)=\frac{1}{2\pi}\int_{-\pi/T}^{\pi/T}\re^{\ri\xi x}\check{g}(\xi,x)\de\xi,\qquad{\rm where } \ \check{g}(\xi,x):=\sum_{\ell\in\ZM}\re^{2\pi \ri\ell x/T}\hat{g}(\xi+2\pi \ell/T),
\end{equation}
and $\hat{g}(\cdot)$ denotes the Fourier transform of $g$,
\[
\hat{g}(\xi)= \int_{-\infty}^\infty \re^{-\ri\xi x}g(x)\de x.
\]
The equality~\eqref{e:Bloch} is a consequence of the inverse Fourier transform formula,
\[
g(x)=\frac1{2\pi}\int_{-\infty}^\infty \re^{\ri\xi x}\hat{g}(\xi)\de\xi
=\frac1{2\pi}\sum_{\ell\in\ZM}\int_{-\pi/T}^{\pi/T} \re^{\ri(\xi+2\pi \ell/T)x}
\hat{g}(\xi+2\pi \ell/T)\de\xi
=\frac1{2\pi}\int_{-\pi/T}^{\pi/T}\re^{\ri\xi x}\check{g}(\xi,x)\de\xi.
\]
Thus, Parseval's equality implies
\begin{eqnarray}\label{e:parseval_0}
\|g\|^2_{L^2(\RM)}
&=& \frac1{2\pi}\int_{-\infty}^\infty|\hat g(\xi)|^2\de\xi =
\frac1{2\pi}\sum_{\ell\in\ZM}\int_{-\pi/T}^{\pi/T} |\hat{g}(\xi+2\pi \ell/T)|^2\de\xi
\\ \nonumber
&=&\frac{1}{2\pi T} \int_{-\pi/T}^{\pi/T}\int_0^T\left|\check{g}(\xi,x)\right|^2\de x\,\de\xi
=\frac{1}{2\pi T} \| \check{g} \|^2_{L^2\left([-\pi/T,\pi/T);L^2_{\rm per}(0,T)\right)}.
\end{eqnarray}
In particular, the Bloch transform
\[
\mathcal{B}\colon L^2(\RM)\to L^2\left([-\pi/T,\pi/T);L^2_{\rm per}(0,T)\right),
  \qquad \mathcal Bg=\check g,
  \]
is a bounded linear operator.
For fixed $m \in \mathbb N$, we have an analogue of Parseval's equality for the $H^m$-norm,\footnote{Throughout the paper, the notation $A\lesssim B$ means that there exists a constant $C>0$, independent of $A$ and $B$, such that $A\leq CB$, and we write $A\simeq B$ if $A\lesssim B$ and $B\lesssim A$.}
\begin{eqnarray}\label{e:parseval_m}
\|g\|^2_{H^m(\RM)}
&\simeq & \int_{-\infty}^\infty(1+\xi^2)^m|\hat g(\xi)|^2\de\xi \simeq
\sum_{\ell\in\ZM}\int_{-\pi/T}^{\pi/T}(1+(2\pi\ell/T)^2)^m |\hat{g}(\xi+2\pi \ell/T)|^2\de\xi
\\ \nonumber
&\simeq& \int_{-\pi/T}^{\pi/T}\|\check{g}(\xi,\cdot)\|^2_{H^m_{\rm per}(0,T)}\de\xi
 = \| \check{g} \|^2_{L^2\left([-\pi/T,\pi/T);H^m_{\rm per}(0,T)\right)},
\end{eqnarray}
with $g \in H^m(\R)$, yielding that the Bloch transform can also be regarded as a bounded linear operator
\[
\mathcal{B}\colon H^m(\RM)\to L^2\left([-\pi/T,\pi/T);H^m_{\rm per}(0,T)\right).
  \]
In particular, for each $g \in H^1(\R)$ we have that $\mathcal{B}(g)(\xi,\cdot)$ is differentiable with
\begin{equation}\label{e:partialx}
\partial_x\mathcal{B}(g)(\xi,x)=\mathcal{B}\left(\partial_x g\right)(\xi,x)-\ri\xi\check{g}(\xi,x).
\end{equation}

Taking a differential operator $\mathcal A$ with smooth $T$-periodic coefficients acting on $L^2(\R)$, the associated Bloch operators are defined by
\[
\mathcal A_\xi=\mathcal{M}_\xi^{-1} \mathcal{A}\mathcal{M}_\xi,\qquad \xi\in[-\pi/T,\pi/T),
\]
where here $\mathcal{M}_\xi$ denotes the multiplication operator $\left(\mathcal{M}_\xi f\right)(x) = \re^{\ri \xi x} f(x)$. The operators $\mathcal A_\xi$ act in $L^2_{\rm per}(0,T)$, and their dependency on $\xi$ is analytic. For $v\in D(\mathcal A)$ we have the representation formula
\[
\mathcal{A}v(x)=\frac{1}{2\pi}\int_{-\pi/T}^{\pi/T} \re^{\ri\xi x}\mathcal{A}_\xi\check{v}(\xi,x)\de\xi,
\]
and a similar formula holds for the associated semigroups, provided they exist,
\begin{equation}\label{e:blochsoln}
\re^{\mathcal{A}t}v(x)=\frac{1}{2\pi}\int_{-\pi/T}^{\pi/T} \re^{\ri\xi x}\re^{\mathcal{A}_\xi t}\check{v}(\xi,x)\de\xi.
\end{equation}
An important property of the Bloch operators $\mathcal A_\xi$ is that their domains are compactly embedded in $L^2_{\rm per}(0,T)$, and therefore their spectra consist entirely of isolated eigenvalues of finite algebraic multiplicities. The spectral decomposition formula
\[
  \sigma\left(\mathcal{A}\right)=\bigcup_{\xi\in[-\pi/T,\pi/T)}\sigma\left(\mathcal{A}_\xi\right),
\]
characterizes the $L^2(\RM)$-spectrum of $\mathcal{A}$ as the union of countably many continuous curves $\lambda(\xi)$ corresponding to the eigenvalues of the associated Bloch operators $\mathcal{A}_\xi$.
We refer to~\cite[Section 2]{HJP20} for more details and further properties.

\subsection{Spectral Properties}

The Bloch operators associated with the periodic differential operator $\mathcal A[\phi]$ given by~\eqref{e:Aphi} are defined for $\xi\in[-\pi/T,\pi/T)$ by the formula
\begin{equation*}
\mathcal A_\xi[\phi]=- I+\mathcal{J}\mathcal{L}_\xi[\phi],
\end{equation*}
where
\[
\mathcal{J}=\left(\begin{array}{cc}0&-1\\1&0\end{array}\right),\qquad
\mathcal{L}_\xi[\phi] = \left(\begin{matrix} -\b (\partial_x+\ri\xi)^2 - \a  + 3\phi_{r}^2 + \phi_{i}^2 & 2\phi_{r}\phi_{i} \\
  2\phi_{r}\phi_{i} & -\b (\partial_x+\ri\xi)^2 - \a  + \phi_{r}^2 + 3\phi_{i}^2\end{matrix}\right).
  \]
As the $T$-periodic solution $\phi$ of the LLE~\eqref{e:LLE} is smooth, the operators $\mathcal A_\xi[\phi]$ are closed in $L^2_{\rm per}(0,T)$ and $H^m_{\rm per}(0,T)$, for any $m\in\NM$, with compactly embedded domains $H^2_{\rm per}(0,T)$ and $H^{m+2}_{\rm per}(0,T)$, respectively. A standard bootstrapping argument, just as for stationary solutions of~\eqref{e:LLE}, shows that eigenfunctions and generalized eigenfunctions of $\mathcal A_\xi[\phi]$ are smooth. As a consequence, the operators $\mathcal A_\xi[\phi]$ have the same spectral properties when acting on $L^2_{\rm per}(0,T)$ or on $H^m_{\rm per}(0,T)$, and the following lemma which is a direct consequence of the diffusive spectral stability of $\phi$ and which was proved in~\cite{HJP20} for $L^2_{\rm per}(0,T)$, remains valid for $H^m_{\rm per}(0,T)$.

\begin{lemma}[Spectral Preparation]\label{L:spec_prep}
The Bloch operators $\mathcal A_\xi[\phi]$ acting on $H^m_{\rm per}(0,T)$, for some
$m\in\NM_0$, have the following properties.\footnote{We use the notation $H^0_{\rm per}(0,T)=L^2_{\rm per}(0,T)$.}
  \begin{enumerate}
\item For any fixed $\xi_0\in(0,\pi/T)$, there exists a positive constant $\delta_0$  such that
\[
\Re\,\sigma(\mathcal{A}_\xi[\phi])<-\delta_0,
\]
for all $\xi\in[-\pi/T,\pi/T)$ with $|\xi|>\xi_0$.
  \item There exist constants $\xi_1 \in (0,{\pi}/{T})$ and $\delta_1 > 0$ such that for any $|\xi|<\xi_1$ the spectrum of $\mathcal{A}_\xi[\phi]$ decomposes into two disjoint subsets
\[
\sigma(\mathcal{A}_\xi[\phi])=\sigma_-(\mathcal{A}_\xi[\phi])\cup\sigma_0(\mathcal{A}_\xi[\phi]),
\]
with the following properties:
\begin{enumerate}
\item $\Re\,\sigma_-(\mathcal{A}_\xi[\phi])<-\delta_1$ and $\Re\,\sigma_0(\mathcal{A}_\xi[\phi])>-\delta_1$;
\item the set $\sigma_0(\mathcal{A}_\xi[\phi])$ consists of a single  eigenvalue $\lambda_c(\xi)$ which is simple, analytic in $\xi$, and expands as
\begin{equation}\label{e:lambda_c}
\lambda_c(\xi)=\ri a\xi- d \xi^2+\mathcal{O}(|\xi|^3),
\end{equation}
for some $a\in\RM$ and $d>0$;
\item the eigenfunction $\Phi_\xi$ associated with $\lambda_c(\xi)$ is a smooth function, depends analytically on $\xi$, and there exists a constant $C > 0$ such that
\[
\left\|\Phi_\xi-\phi'\right\|_{H^m_{\rm per}(0,T)} \leq C |\xi|,
\]
where $\phi'$ is the derivative of the $T$-periodic solution $\phi$.
\end{enumerate}
\end{enumerate}
\end{lemma}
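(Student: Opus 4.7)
My plan is to argue that the proof of this lemma, carried out in $L^2_{\rm per}(0,T)$ in~\cite{HJP20}, extends essentially verbatim to $H^m_{\rm per}(0,T)$, thanks to the smoothness of $\phi$ and the ellipticity of $\mathcal{A}_\xi[\phi]$. By the standard bootstrapping argument already mentioned just above the lemma, every (generalized) eigenfunction of $\mathcal{A}_\xi[\phi]$ in $L^2_{\rm per}(0,T)$ automatically lies in $C^\infty_{\rm per}(0,T) \subset H^m_{\rm per}(0,T)$ for each $m \in \NM_0$, so the pure point spectrum and its associated spectral projectors are independent of the Sobolev exponent.

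Assertion~(i) is then immediate from assumption~(ii) of Definition~\ref{Def:spec_stab}: for $|\xi| > \xi_0$ one has $\Re\sigma(\mathcal{A}_\xi[\phi]) \leq -\theta\xi^2 \leq -\theta\xi_0^2$, so any $0 < \delta_0 < \theta\xi_0^2$ suffices. For assertion~(ii), I will apply Kato's analytic perturbation theory to the family $\xi \mapsto \mathcal{A}_\xi[\phi]$. This is an analytic family of type~(A) on $H^m_{\rm per}(0,T)$ with $\xi$-independent domain $H^{m+2}_{\rm per}(0,T)$, since $\xi$ enters only polynomially through the term $(\partial_x + \ri\xi)^2$. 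By assumption~(iii), $\lambda = 0$ is a simple isolated eigenvalue of $\mathcal{A}_0[\phi]$ with eigenfunction $\phi'$; combined with assumption~(i) and the discreteness of $\sigma(\mathcal{A}_0[\phi])$ (from compact resolvent), this ensures that $\sigma(\mathcal{A}_0[\phi]) \setminus \{0\}$ lies in some half-plane $\{\Re\lambda < -2\delta_1\}$. Standard perturbation theory then produces, for all $|\xi| < \xi_1$ with $\xi_1 > 0$ sufficiently small, the spectral decomposition $\sigma(\mathcal{A}_\xi[\phi]) = \sigma_-(\mathcal{A}_\xi[\phi]) \cup \{\lambda_c(\xi)\}$ with $\Re\sigma_-(\mathcal{A}_\xi[\phi]) < -\delta_1$, and both $\lambda_c(\xi)$ and a suitably normalized eigenfunction $\Phi_\xi$ depending analytically on $\xi$ in the $H^{m+2}_{\rm per}$-topology, with $\lambda_c(0) = 0$ and $\Phi_0 = \phi'$.

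To obtain~\eqref{e:lambda_c}, I will Taylor-expand $\lambda_c(\xi) = c_1 \xi + c_2 \xi^2 + \mathcal{O}(|\xi|^3)$ about $\xi = 0$. Since the matrix operator $\mathcal{A}[\phi]$ has real coefficients, complex conjugation yields $\mathcal{A}_{-\xi}[\phi] = \overline{\mathcal{A}_\xi[\phi]}$; by uniqueness of the analytic continuation of the simple eigenvalue $\lambda_c(\cdot)$ one deduces $\lambda_c(-\xi) = \overline{\lambda_c(\xi)}$. Matching Taylor coefficients then forces $c_1 = \ri a$ and $c_2 = -d$ with $a, d \in \RM$, and the diffusive bound $\Re\lambda_c(\xi) \leq -\theta\xi^2$ from assumption~(ii) yields $d \geq \theta > 0$. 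Finally, the estimate $\|\Phi_\xi - \phi'\|_{H^m_{\rm per}} \leq C|\xi|$ is immediate from Taylor's theorem applied to the $H^m_{\rm per}$-analytic curve $\xi \mapsto \Phi_\xi$ with $\Phi_0 = \phi'$. I foresee no substantive obstacle; the only step requiring some care is verifying that Kato's perturbation framework applies identically on each Sobolev space $H^m_{\rm per}(0,T)$, which is guaranteed precisely by the smoothness of $\phi$ and the ellipticity of $\mathcal{A}_\xi[\phi]$.
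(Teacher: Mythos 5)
Your proposal is correct and follows the same route as the paper: the paper justifies the lemma by citing its $L^2_{\rm per}$ counterpart from~\cite{HJP20} and observing that elliptic bootstrapping makes all (generalized) eigenfunctions smooth, so that the spectrum and spectral projections of $\mathcal A_\xi[\phi]$ are independent of the Sobolev exponent $m$; the expansion~\eqref{e:lambda_c} is obtained, exactly as you argue, from the conjugation symmetry $\overline{\mathcal A_\xi[\phi]} = \mathcal A_{-\xi}[\phi]$ together with the diffusive bound $\Re\lambda_c(\xi)\leq-\theta\xi^2$. You additionally reconstruct the underlying Kato-type analytic-perturbation argument that~\cite{HJP20} relies on, which the paper leaves implicit, but the structure of the reasoning is the same.
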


We point out that the expansion~\eqref{e:lambda_c} of the simple eigenvalue $\lambda_c(\xi)$ is a consequence of the property
\[
\overline{\mathcal A_\xi[\phi]} = \mathcal A_{-\xi}[\phi],
\]
which holds in general for Bloch operators $\mathcal A_\xi$ associated with real periodic differential operators $\mathcal A$. So, the eigenvalue $\lambda_c(\xi)$, being the only eigenvalue of $\mathcal A_\xi[\phi]$ with real part larger than $-\delta_1$ for all $|\xi|<\xi_1$, satisfies $\overline{\lambda_c(\xi)}=\lambda_c(-\xi)$, which gives the expansion~\eqref{e:lambda_c}. In addition, if the periodic solution $\phi$ of the LLE is an even function, which is the case for the diffusively spectrally stable periodic solutions constructed in~\cite{DH18_1}, then the operator $\mathcal A[\phi]$ is invariant under the reflection $x\mapsto-x$, and the Bloch operators satisfy
\[
R\mathcal A_\xi[\phi] = \mathcal A_{-\xi}[\phi]R, \qquad (Rv)(x)=v(-x),
\]
which implies $\lambda_c(\xi)=\lambda_c(-\xi)$ and gives $a=0$ in the expansion~\eqref{e:lambda_c} in this case.

Finally, notice that the adjoint operator $\mathcal{A}^*_\xi[\phi]$ has similar spectral properties, its spectrum being equal to the complex conjugated spectrum of $\mathcal A_\xi[\phi]$. In particular, $\overline{\lambda_c(\xi)}$ is a simple eigenvalue of $\mathcal{A}^*_\xi[\phi]$ with smooth associated eigenfunction $\widetilde{\Phi}_\xi$ depending analytically on $\xi$.

\subsection{Semigroup Properties}

As for the spectral properties above, the semigroup properties of the operators  $\mathcal A_\xi[\phi]$ are the same when acting on $L^2_{\rm per}(0,T)$ or on $H^m_{\rm per}(0,T)$, for any $m\in\NM$. The following result proved in~\cite{HJP20} for $L^2_{\rm per}(0,T)$ remains valid in $H^m_{\rm per}(0,T)$.

\begin{lemma}[Bloch semigroups]\label{L:Bloch_sg}
The Bloch operators $\mathcal A_\xi[\phi]$ acting on $H^m_{\rm per}(0,T)$, for some $m\in\NM_0$, generate $C^0$-semigroups with the following properties.
\begin{enumerate}
\item For any fixed $\xi_0\in(0,\pi/T)$, there exist positive constants $C_0$ and $\mu_0$  such that
\[
\left\|\re^{\mathcal{A}_\xi[\phi] t}\right\|_{\mathcal L(H^m_{\rm per}(0,T))}\leq C_0\re^{-\mu_0t},
\]
for all $t\geq0$ and all $\xi\in[-\pi/T,\pi/T)$ with $|\xi|>\xi_0$.
  \item With $\xi_1$ chosen as in  Lemma~\ref{L:spec_prep}~(ii), there exist positive constants  $C_1$ and $\mu_1$ such that for any $|\xi|<\xi_1$, if $\Pi(\xi)$ is the spectral projection onto the (one-dimensional) eigenspace associated with the eigenvalue  $\lambda_c(\xi)$  given by Lemma~\ref{L:spec_prep}~(ii), then
\[
\left\|\re^{\mathcal{A}_\xi[\phi]t}\left(I-\Pi(\xi)\right)\right\|_{\mathcal L(H^m_{\rm per}(0,T))}\leq C_1 \re^{-\mu_1t},
\]
for all $t\geq0$.
  \end{enumerate}
\end{lemma}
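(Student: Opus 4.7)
The plan is to bootstrap the $L^2_{\rm per}(0,T)$-estimates already established in~\cite{HJP20} to $H^m_{\rm per}(0,T)$-estimates through an inductive Duhamel argument, exploiting that the commutator $[\partial_x, \mathcal{A}_\xi[\phi]]$ is a bounded multiplication operator involving no derivatives of the solution. Before turning to the estimates, note that $\mathcal{A}_\xi[\phi] = A_0(\xi) + \mathcal{J}M(x)$, where $A_0(\xi) := -I - \beta\mathcal{J}(\partial_x + \ri\xi)^2$ has $x$-independent coefficients and hence generates a $C^0$-group on $H^m_{\rm per}(0,T)$ for any $m \in \NM_0$ via Fourier series, and where $\mathcal{J}M(x)$ is a smooth, $\xi$-independent bounded multiplication operator. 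The bounded perturbation theorem then yields that $\mathcal{A}_\xi[\phi]$ generates a $C^0$-semigroup on $H^m_{\rm per}(0,T)$ for every $m \geq 0$.

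For part~(i), fix $|\xi| > \xi_0$, let $u_0 \in H^m_{\rm per}(0,T)$, and set $u(t) := \re^{\mathcal{A}_\xi[\phi]t} u_0$. We argue by induction on $m$, with base case $m = 0$ given by~\cite{HJP20}. Since $[\partial_x, \mathcal{A}_\xi[\phi]] = \mathcal{J}M'(x)$, differentiating the evolution equation $m$ times yields
\begin{equation*}
\partial_t(\partial_x^m u) = \mathcal{A}_\xi[\phi](\partial_x^m u) + R_m(u),
\end{equation*}
where $R_m(u)$ is a differential expression of order $\leq m-1$ in $u$ with smooth periodic, $\xi$-independent coefficients. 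Applying the Duhamel formula
\begin{equation*}
\partial_x^m u(t) = \re^{\mathcal{A}_\xi[\phi]t}\partial_x^m u_0 + \int_0^t \re^{\mathcal{A}_\xi[\phi](t-s)} R_m(u(s))\,\de s,
\end{equation*}
together with the $L^2$-estimate in~(i) and the inductive bound on $\|u\|_{H^{m-1}}$, the $L^2$-bound on $\partial_x^m u(t)$ follows at any rate $\mu_0' < \mu_0$. Uniformity in $\xi$ transfers directly from the $L^2$-case since all additional constants are $\xi$-independent.

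For part~(ii), the plan is analogous but applied to $w(t) := \re^{\mathcal{A}_\xi[\phi]t}(I - \Pi(\xi))u_0$. The projection $\Pi(\xi)$ is bounded on $H^m_{\rm per}$ because its range is the eigenspace spanned by the smooth eigenfunction $\Phi_\xi$, and it commutes with $\mathcal{A}_\xi[\phi]$, so $w(t) \in \Range(I - \Pi(\xi))$ for all $t \geq 0$. The new complication is that $\partial_x^m w(t)$ need not lie in $\Range(I - \Pi(\xi))$, which we overcome by splitting
\begin{equation*}
\partial_x^m w(t) = (I - \Pi(\xi))\partial_x^m w(t) + \Pi(\xi)\partial_x^m w(t).
\end{equation*}
The projected piece equals $(-1)^m \langle w(t), \partial_x^m \widetilde{\Phi}_\xi\rangle_{L^2_{\rm per}}\, \Phi_\xi$ by integration by parts onto the smooth adjoint eigenfunction, and is thus controlled by $CC_1\re^{-\mu_1 t}\|u_0\|_{L^2}$ via~\cite{HJP20}. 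The complementary piece $v_m(t) := (I - \Pi(\xi))\partial_x^m w(t)$ lies in $\Range(I - \Pi(\xi))$, so applying Duhamel on this invariant subspace, together with the restricted $L^2$-estimate from~\cite{HJP20} and an inductive hypothesis on $\|w\|_{H^{m-1}}$, closes the induction. The main obstacle throughout is the apparent loss of regularity in the Duhamel source term $R_m(u)$, which is absorbed by the inductive structure since $R_m$ involves only derivatives of $u$ of order $\leq m-1$; a minor point is that the exponential decay rates may degrade slightly at each induction step through convolutions of the form $\int_0^t \re^{-\mu(t-s)}\re^{-\mu' s}\,\de s$, but since the statement only asserts the existence of positive constants $\mu_0$ and $\mu_1$, this degradation is harmless.
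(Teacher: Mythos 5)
Your proof is correct but takes a genuinely different route from the paper. The paper's own ``proof'' consists of a single assertion: because elliptic regularity makes the eigenfunctions of $\mathcal A_\xi[\phi]$ smooth, the Bloch operators have identical spectral data on $L^2_{\rm per}(0,T)$ and $H^m_{\rm per}(0,T)$, and therefore the $L^2$-semigroup estimates established in~\cite{HJP20} carry over verbatim to $H^m$. This implicitly relies on whatever resolvent or spectral-decomposition machinery underlies the $L^2$ proof being insensitive to the ambient Hilbert space. You instead build a self-contained bootstrap: the decomposition $\mathcal A_\xi[\phi] = A_0(\xi) + \mathcal J M(x)$ with $A_0(\xi)$ a constant-coefficient generator plus the bounded perturbation theorem gives generation on every $H^m_{\rm per}(0,T)$; then the observation that $[\partial_x,\mathcal A_\xi[\phi]] = \mathcal J M'(x)$ is a bounded, $\xi$-independent multiplication operator drives an inductive Duhamel argument promoting $L^2$-decay to $H^m$-decay, with a clean projection splitting for part~(ii) to handle the fact that $\partial_x^m$ does not preserve $\Range(I-\Pi(\xi))$. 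What your approach buys is that it never needs to reopen the $L^2$ argument from~\cite{HJP20} in $H^m$: it treats that result as a black box and transports it one derivative at a time. What it costs is a (harmless) degradation of the exponential rate at each induction step from the $\re^{-\mu(t-s)}\re^{-\mu' s}$ convolutions, and a mild care point (correctly handled) that the commuting argument is first made rigorous for smooth data and extended by density. Both arguments are valid; the paper's is shorter and conceptually cleaner, yours is more elementary and more portable to settings where the $L^2$-spectral argument is less transparent.
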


\section{Linear Estimates} \label{sec:linear_est}

In this section, we review the decomposition of the evolution semigroup $\re^{\mathcal{A}[\phi]t}$ in a low- and high-frequency part that was recently obtained in the work~\cite{HJP20}. We extend the $L^2$-estimates from~\cite{HJP20} on the high-frequency part to $H^m$-estimates, which will be needed in our subsequent nonlinear stability analysis. Moreover, we exploit the smoothing properties of the low-frequency part to extend the $L^2 \cap L^1 \to L^2$-estimates from~\cite{HJP20} to $L^2 \cap L^1 \to H^m$-estimates and establish associated integration by part identities.

\subsection{Decomposition of the Evolution Semigroup}

Following~\cite{HJP20}, we decompose the $C^0$-semigroup $\re^{\mathcal{A}[\phi]t}$ in an exponentially decaying part and a critical part exhibiting algebraic decay. Take $\xi_1\in(0,{\pi}/{T})$  as in Lemma~\ref{L:Bloch_sg} and a smooth nonnegative cut-off function $\rho$ satisfying $\rho(\xi)=1$ for $|\xi|<{\xi_1}/{2}$ and $\rho(\xi)=0$ for $|\xi|>\xi_1$. For each $|\xi|<\xi_1$, consider the spectral projection $\Pi(\xi)$ onto the one-dimensional eigenspace of $\mathcal{A}_\xi[\phi]$ associated with the eigenvalue $\lambda_c(\xi)$, given explicitly by
\[
\Pi(\xi)g = \left\langle\widetilde\Phi_\xi,g\right\rangle_{L^2(0,T)}\Phi_\xi,
\]
for any $g\in L^2_{\mathrm{per}}(0,T)$, where $\widetilde\Phi_\xi$ is the smooth eigenfunction of the adjoint operator $\mathcal{A}^*_\xi[\phi]$ associated with the eigenvalue $\overline{\lambda_c(\xi)}$ that satisfies \smash{$\langle\widetilde\Phi_\xi,\Phi_\xi\rangle_{L^2(0,T)}=1$.}

Starting from the representation formula~\eqref{e:blochsoln}, we write
\begin{align}
\re^{\mathcal{A}[\phi]t}v(x)=\frac{1}{2\pi}\int_{-\pi/T}^{\pi/T}\re^{\ri\xi x}\re^{\mathcal{A}_\xi[\phi] t}\check{v}(\xi,x)\de \xi=S_c(t)v(x) + S_e(t)v(x), \label{e:semidecomp1}
\end{align}
for $t \geq 0$ and $x \in \R$, with
\[
S_c(t)v(x):= \frac{1}{2\pi}\int_{-\pi/T}^{\pi/T}\rho(\xi)\re^{\ri\xi x}\re^{\mathcal{A}_\xi[\phi] t}\Pi(\xi)\check{v}(\xi,x)\de \xi,
\]
and
\[
S_e(t)v(x):=\frac{1}{2\pi}\int_{-\pi/T}^{\pi/T} \left(1-\rho(\xi)\right)\re^{\ri\xi x}\re^{\mathcal{A}_\xi[\phi]t}\check{v}(\xi,x)\de \xi
			+\frac{1}{2\pi}\int_{-\pi/T}^{\pi/T}\rho(\xi)\re^{\ri\xi x}\re^{\mathcal{A}_\xi[\phi]t}\left(1-\Pi(\xi)\right)\check{v}(\xi,x)\de \xi.
\]
The component $S_e(t)$ is the exponentially decaying part of the evolution semigroup $\re^{\mathcal{A}[\phi]t}$.

\begin{lemma}[Exponential decay]\label{L:unmod_bd} For any integer $m\geq0$, there exist constants $\mu, C>0$ such that the inequality
\[
\left\|S_e(t)v\right\|_{\mathcal L(H^m)} \leq C \re^{-\mu t},
\]
holds for any $t\geq 0$.
\end{lemma}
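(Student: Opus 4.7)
The plan is to reduce the $H^m$-bound to the uniform Bloch-space estimates furnished by Lemma~\ref{L:Bloch_sg}, using the Parseval-type identity~\eqref{e:parseval_m}. First I would observe that, by construction of $S_e(t)$, the Bloch transform of $S_e(t)v$ is given pointwise in $\xi$ by
\[
\mathcal{B}\left(S_e(t)v\right)(\xi,\cdot) = (1-\rho(\xi))\re^{\mathcal{A}_\xi[\phi]t}\check{v}(\xi,\cdot) + \rho(\xi)\re^{\mathcal{A}_\xi[\phi]t}\left(I-\Pi(\xi)\right)\check{v}(\xi,\cdot),
\]
so that \eqref{e:parseval_m} gives
\[
\|S_e(t)v\|_{H^m(\R)}^2 \simeq \int_{-\pi/T}^{\pi/T} \left\|\mathcal{B}\left(S_e(t)v\right)(\xi,\cdot)\right\|_{H^m_{\rm per}(0,T)}^2 \de\xi.
\]

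Next I would estimate the two contributions separately, uniformly in $\xi$. The first term is supported in $|\xi| \geq \xi_1/2$; applying Lemma~\ref{L:Bloch_sg}(i) with the choice $\xi_0 = \xi_1/2$ yields constants $C_0,\mu_0 > 0$ such that
\[
\left\|(1-\rho(\xi))\re^{\mathcal{A}_\xi[\phi]t}\check{v}(\xi,\cdot)\right\|_{H^m_{\rm per}(0,T)} \leq C_0 \re^{-\mu_0 t} \|\check{v}(\xi,\cdot)\|_{H^m_{\rm per}(0,T)}.
\]
For the second term, supported in $|\xi| \leq \xi_1$, Lemma~\ref{L:Bloch_sg}(ii) produces constants $C_1,\mu_1 > 0$ controlling $\re^{\mathcal{A}_\xi[\phi]t}(I-\Pi(\xi))$ uniformly in $\xi$. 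Here I need to verify that $\|\Pi(\xi)\|_{\mathcal{L}(H^m_{\rm per}(0,T))}$ is uniformly bounded on $|\xi| \leq \xi_1$, which follows because $\Pi(\xi) g = \langle\widetilde{\Phi}_\xi, g\rangle_{L^2(0,T)} \Phi_\xi$ and both $\Phi_\xi$ and $\widetilde{\Phi}_\xi$ are smooth and depend analytically on $\xi$ by Lemma~\ref{L:spec_prep}(ii)(c); this uniform bound is the only step requiring genuine verification beyond direct quotation.

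Combining the two bounds with $\mu := \min\{\mu_0,\mu_1\} > 0$ and reinserting into the Parseval identity gives
\[
\|S_e(t)v\|_{H^m(\R)}^2 \lesssim \re^{-2\mu t}\int_{-\pi/T}^{\pi/T} \|\check{v}(\xi,\cdot)\|_{H^m_{\rm per}(0,T)}^2 \de\xi \simeq \re^{-2\mu t}\|v\|_{H^m(\R)}^2,
\]
where the final step again uses \eqref{e:parseval_m}. Taking square roots yields the claimed bound $\|S_e(t)\|_{\mathcal{L}(H^m)} \leq C\re^{-\mu t}$. The argument is a short Plancherel-type synthesis rather than anything delicate; the only mild obstacle is establishing the uniform $H^m$-bound on the rank-one spectral projection $\Pi(\xi)$, which dissolves once one recalls the analytic/smooth $\xi$-dependence of its generating eigenfunctions.
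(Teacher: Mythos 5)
Your proof is correct and follows exactly the route the paper indicates: combine the Parseval identity~\eqref{e:parseval_m} with the uniform-in-$\xi$ Bloch-semigroup bounds of Lemma~\ref{L:Bloch_sg}. One small remark: the step you flag as ``the only step requiring genuine verification'' — uniform boundedness of $\Pi(\xi)$ in $\mathcal{L}(H^m_{\rm per}(0,T))$ — is actually not needed, because Lemma~\ref{L:Bloch_sg}(ii) already states the bound on the \emph{composite} operator $\re^{\mathcal{A}_\xi[\phi]t}(I-\Pi(\xi))$ in $\mathcal{L}(H^m_{\rm per}(0,T))$, uniformly in $|\xi|<\xi_1$; you can quote it directly without separately controlling $\Pi(\xi)$.
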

\begin{proof}
This estimate has been established in the case $m=0$ in~\cite{HJP20}. The proof relies upon Parseval's equality for $L^2$-functions~\eqref{e:parseval_0} and the $L^2$-estimates on the Bloch semigroups from Lemma~\ref{L:Bloch_sg}. It is easily transferred to $m\in\NM$ using Parseval's equality for $H^m$-functions~\eqref{e:parseval_m} and the $H^m$-estimates for the Bloch semigroups in Lemma~\ref{L:Bloch_sg}.
\end{proof}

We continue by further decomposing the critical component $S_c(t)$ of the semigroup in order to identify its slowest decaying component. We introduce a smooth cut-off function $\chi\colon[0,\infty) \to \R$, which vanishes on $[0,1]$ and equals $1$ on $[2,\infty)$.\footnote{ The reason for introducing the cut-off function $\chi(t)$ becomes apparent only in the forthcoming nonlinear stability analysis; we refer to Remark~\ref{rem:cut-off} for further details.} Using the explicit formula for the spectral projection $\Pi(\xi)$ and Lemma~\ref{L:spec_prep} we write
\begin{align}
\begin{split}
S_c(t)v(x) &= \chi(t) S_c(t) v(x) + (1-\chi(t))S_c(t) v(x)\\
&=\frac{\chi(t)}{2\pi}\int_{-\pi/T}^{\pi/T}\rho(\xi)\re^{\ri\xi x+\lambda_c(\xi)t}\left<\widetilde{\Phi}_\xi,\check{v}(\xi,\cdot)\right>_{L^2(0,T)}\Phi_\xi(x) \de \xi + (1-\chi(t))S_c(t) v(x)\\
&=\phi'(x)\left(\frac{\chi(t)}{2\pi}\int_{-\pi/T}^{\pi/T}\rho(\xi)\re^{\ri\xi x+\lambda_c(\xi)t}\left<\widetilde{\Phi}_\xi,\check{v}(\xi,\cdot)\right>_{L^2(0,T)} \de \xi\right) + (1-\chi(t))S_c(t) v(x)\\
&\quad+\frac{\chi(t)}{2\pi}\int_{-\pi/T}^{\pi/T}\rho(\xi)\re^{\ri\xi x+\lambda_c(\xi)t}\ri\xi\left(\frac{\Phi_\xi(x)-\phi'(x)}{\ri\xi}\right)\left<\widetilde{\Phi}_\xi,\check{v}(\xi,\cdot)\right>_{L^2(0,T)} \de \xi \\
&=: \phi'(x)s_p(t)v(x) + \widetilde{S}_c(t)v(x),
\end{split} \label{e:semidecomp2}
\end{align}
with
\begin{align*}
s_p(t)v(x) = \frac{\chi(t)}{2\pi}\int_{-\pi/T}^{\pi/T}\rho(\xi)\re^{\ri\xi x+\lambda_c(\xi)t}\left<\widetilde{\Phi}_\xi,\check{v}(\xi,\cdot)\right>_{L^2(0,T)} \de \xi,
\end{align*}
for $t \geq 0$ and $x \in \R$. The motivation behind the decomposition~\eqref{e:semidecomp2} is that the $s_p(t)$-contribution precisely captures the slowest (diffusive) decay at rate $(1+t)^{-1/4}$ exhibited by $S_c(t)$, whereas the remaining part $\widetilde{S}_c(t)$ decays faster at rate $(1+t)^{-3/4}$. The following lemma establishes these algebraic decay properties, which are needed in the upcoming nonlinear analysis.

\begin{lemma}[Critical Component]\label{L:mod_bd}
For all integers $\ell,j,m\geq 0$ there exist constants $C_{\ell,j},C_m>0$ such that
\begin{align*}
 &\left\|\partial_x^\ell\partial_t^j s_p(t)v\right\|_{L^2} \leq C_{\ell,j}(1+t)^{-\frac{\ell+j}{2}}\|v\|_{L^2}, \qquad v\in L^2(\R),\\
 &\left\|\partial_x^\ell\partial_t^j s_p(t)v\right\|_{L^2} \leq C_{\ell,j}(1+t)^{-\frac{1}{4}-\frac{\ell+j}{2}}\|v\|_{L^1} ,\qquad v\in L^1(\R)\cap L^2(\R),
\end{align*}
and
\begin{align*}
  &\left\|\partial_x^m\widetilde{S}_c(t)v\right\|_{L^2}\leq C_m(1+t)^{-\frac{3}{4}}\|v\|_{L^1}, \qquad v\in L^1(\R)\cap L^2(\R),
\end{align*}
for all $t\geq 0$.
\end{lemma}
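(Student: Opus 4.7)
The strategy is a standard Bloch--Fourier analysis, combining Plancherel's identity with Gaussian $\xi$-integrals generated by the decay factor $\re^{\lambda_c(\xi)t}$. From the expansion~\eqref{e:lambda_c} in Lemma~\ref{L:spec_prep}, one has $\Re\lambda_c(\xi)\leq -d_0\xi^2$ and $|\lambda_c(\xi)|\lesssim|\xi|$ on $\mathrm{supp}(\rho)$ for some $d_0>0$, which is the source of diffusive decay throughout.

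Observe first that in the integral defining $s_p(t)v$ the $x$-dependence enters only through $\re^{\ri\xi x}$, so $s_p(t)v$ is a pure Fourier multiplier with symbol supported in $[-\pi/T,\pi/T]$. Plancherel thus yields
\[
\|\partial_x^\ell\partial_t^j s_p(t)v\|_{L^2}^2 \simeq \int_{-\pi/T}^{\pi/T} |\xi|^{2\ell}\bigl|\partial_t^j\bigl[\chi(t)\re^{\lambda_c(\xi)t}\bigr]\bigr|^2 \rho(\xi)^2 \bigl|\langle\widetilde{\Phi}_\xi,\check{v}(\xi,\cdot)\rangle_{L^2(0,T)}\bigr|^2 d\xi.
\]
Applying Leibniz with $\chi^{(k)}$ compactly supported in $[1,2]$ for $k\geq 1$, the estimate reduces to controlling $\int \rho(\xi)^2 |\xi|^{2(\ell+j)}\re^{-2d_0\xi^2 t}|\langle\widetilde{\Phi}_\xi,\check{v}\rangle|^2\,d\xi$, modulo harmless terms supported in $t\in[1,2]$. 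For the $L^2\to L^2$ bound I would estimate $|\langle\widetilde{\Phi}_\xi,\check{v}\rangle|\lesssim\|\check{v}(\xi,\cdot)\|_{L^2_{\rm per}}$ uniformly in $\xi$, apply Parseval~\eqref{e:parseval_0}, and then use $\sup_\xi|\xi|^{2(\ell+j)}\re^{-2d_0\xi^2 t}\lesssim t^{-(\ell+j)}$ for $t\geq 1$ combined with $\chi(t)\equiv 0$ on $[0,1]$ to produce the rate $(1+t)^{-(\ell+j)/2}$. For the $L^1\to L^2$ bound, a Poisson summation argument applied to the definition of $\check{v}$, together with the $T$-periodicity of $\widetilde{\Phi}_\xi$, yields the identity
\[
\langle\widetilde{\Phi}_\xi,\check{v}(\xi,\cdot)\rangle_{L^2(0,T)} = T\int_\R \overline{\widetilde{\Phi}_\xi(y)}\,\re^{-\ri\xi y}\,v(y)\,dy,
\]
so that $|\langle\cdot\rangle|\lesssim\|v\|_{L^1}$ uniformly in $\xi$; the Gaussian integral $\int|\xi|^{2(\ell+j)}\re^{-2d_0\xi^2 t}\,d\xi\lesssim (1+t)^{-(\ell+j)-\frac{1}{2}}$ then supplies the additional $(1+t)^{-1/4}$ factor.

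For $\widetilde{S}_c(t)v$, I would apply the $H^m$-Parseval identity~\eqref{e:parseval_m} to its Bloch representation. The crucial input is the Lipschitz-type bound $\|\Phi_\xi-\phi'\|_{H^m_{\rm per}(0,T)}\lesssim|\xi|$ from Lemma~\ref{L:spec_prep}(ii)(c), which contributes an additional factor $|\xi|^2$ in the $\xi$-integral and yields
\[
\|\chi(t)\widetilde{S}_c(t)v\|_{H^m}^2 \lesssim \|v\|_{L^1}^2 \int_{-\pi/T}^{\pi/T}\rho(\xi)^2\,\xi^2\,\re^{-2d_0\xi^2 t}\,d\xi \lesssim (1+t)^{-\frac{3}{2}}\|v\|_{L^1}^2.
\]
The remaining piece $(1-\chi(t))S_c(t)v$ is supported for $t\in[0,2]$ and is handled by applying $H^m$-Parseval to its Bloch representation, using the uniform $H^m_{\rm per}$-bound on $\Phi_\xi$ together with the $L^1$-estimate on the pairing. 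The principal technical obstacle I anticipate is establishing the Poisson summation identity for the Bloch pairing $\langle\widetilde{\Phi}_\xi,\check{v}\rangle$; once that is in hand, the rest is routine bookkeeping of Gaussian $\xi$-integrals balanced against the cut-off $\chi(t)$.
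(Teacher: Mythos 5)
Your proof is correct and follows essentially the same route as the paper: (Bloch-)Parseval, the $L^2$- and $L^1$-pairing bounds for $\langle\widetilde{\Phi}_\xi,\check{v}(\xi,\cdot)\rangle_{L^2(0,T)}$, and Gaussian $\xi$-integrals balanced against the cut-off $\chi(t)$, with the short-time regime handled by the support properties of $\chi$. The only genuine addition in your write-up is the explicit Poisson-summation identity for the pairing, which the paper simply quotes from~\cite{HJP20} as estimate~\eqref{e:sp}; your derivation is correct, and the ``technical obstacle'' you anticipated is precisely the content of that citation.
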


\begin{remark} \upshape
Lemma~\ref{L:mod_bd} shows that the critical part $S_c(t)$ of the evolution semigroup $\re^{\mathcal A[\phi] t}$ is infinitely smoothing, i.e., it defines a bounded linear map from $L^1(\R)\cap L^2(\R)$ into $H^m(\R)$ for each $m \in \NM_0$. Since the linearization $\mathcal A[\phi]$ of the weakly dissipative LLE is obviously not a sectorial operator, the same cannot be expected for the high-frequency part $S_e(t)$ of $\re^{\mathcal{A}[\phi]t}$.
\end{remark}

\begin{proof} First, from~\cite[Section 3]{HJP20} we have the estimates
  \begin{equation}\label{e:sp}
  \left|\left<\widetilde{\Phi}_\xi,\check{v}(\xi,\cdot)\right>_{L^2(0,T)}\right|
  \lesssim \left\|\check{v}(\xi,\cdot) \right\|_{L^2(0,T)},\qquad
  \left|\left<\widetilde{\Phi}_\xi,\check{w}(\xi,\cdot)\right>_{L^2(0,T)}\right|
  \lesssim \|w\|_{L^1},
  \end{equation}
for $v \in L^2(\R)$ and $w \in L^1(\R) \cap L^2(\R)$. Next, take $t\geq2$, so that $\chi(t)=1$. Then
\[
\partial_x^\ell\partial_t^j s_p(t)v(x) = \frac{1}{2\pi}\int_{-\pi/T}^{\pi/T}\re^{\ri\xi x} \rho(\xi)(\ri\xi)^\ell\left(\lambda_c(\xi)\right)^j\re^{\lambda_c(\xi)t}\left<\widetilde{\Phi}_\xi,\check{v}(\xi,\cdot)\right>_{L^2(0,T)}\de \xi,
\]
and Parseval's equality~\eqref{e:parseval_0} implies that
\[
\left\|\partial_x^\ell\partial_t^j s_p(t)v\right\|_{L^2}^2 =
\frac1{2\pi T}\int_{-\pi/T}^{\pi/T}\int_0^T \left|\rho(\xi)(\ri\xi)^\ell\left(\lambda_c(\xi)\right)^j\re^{\lambda_c(\xi)t}\left<\widetilde{\Phi}_\xi,\check{v}(\xi,\cdot)\right>_{L^2(0,T)}\right|^2\de x\,\de \xi,
\]
for $v \in L^2(\R)$. Using ~\eqref{e:lambda_c} and~\eqref{e:sp} we find
\begin{align*}
  &\left\|\partial_x^\ell\partial_t^j s_p(t)v\right\|_{L^2}
  \lesssim \left\|\xi^{\ell+j}\re^{-\de \xi^2 t}\right\|_{L^\infty_\xi[-\frac{\pi}{T},\frac{\pi}{T})}\|v\|_{L^2}
  \lesssim (1+t)^{-\frac{\ell+j}{2}}\|v\|_{L^2}, \qquad v \in L^2(\R),\\
  &\left\|\partial_x^\ell\partial_t^j s_p(t)w\right\|_{L^2}
  \lesssim \left\|\xi^{\ell+j}\re^{-\de \xi^2 t}\right\|_{L^2_\xi[-\frac{\pi}{T},\frac{\pi}{T})}\|v\|_{L^1}
  \lesssim (1+t)^{-\frac{1}{4}-\frac{\ell+j}{2}}\|w\|_{L^1}, \qquad w \in L^1(\R) \cap L^2(\R),
\end{align*}
which prove the first two inequalities for $t\geq2$. Similarly, for $\widetilde{S}_c(t)$, using in addition that the quantity
\[
\sup_{\xi\in[-\pi/T,\pi/T)} \left\|\partial_x^k\left(\frac{\Phi_\xi-\phi'}{\ri\xi}\right)\right\|_{L^\infty}, \qquad k = 0,\ldots,m,
\]
is finite by Lemma~\ref{L:spec_prep}, we find
\[
\left\|\partial_x^m\widetilde{S}_c(t)v\right\|_{L^2}
\lesssim \left(\left\|\xi^{m+1} \re^{-\de \xi^2 t}\right\|_{L^2_\xi[-\frac{\pi}{T},\frac{\pi}{T})}+\dots+\left\|\xi \re^{-\de \xi^2 t}\right\|_{L^2_\xi[-\frac{\pi}{T},\frac{\pi}{T})}\right)\|v\|_{L^1}
\lesssim (1+t)^{-3/4}\|v\|_{L^1},
\]
which proves the third inequality for $t\geq2$.

The corresponding short-time bounds for $t \in [0,2]$ on  $s_p(t)$ and $\widetilde{S}_c(t)$ follow similarly using that $\chi(t)$, and thus $s_p(t)$, vanishes on $[0,1]$,
and that $\chi(t)$ and its derivatives are bounded on $[1,2]$.
\end{proof}

\subsection{Integration by Parts Identities}\label{sec:parts}

In addition to the above linear estimates, our forthcoming nonlinear iteration scheme requires the following integration-by-parts type identities to move derivatives off factors that may lead to a loss of regularity onto the smoothing low-frequency part of the semigroup.

\begin{proposition}[Integration by Parts] \label{P:parts}
Given $f,g\in H^1(\RM)$, we have the following identities for all $t\geq 0$ and $x \in \R$:
\begin{align*}
s_p(t)\left(f\cdot \partial_x g\right)(x) &= -s_p(t)(\partial_xf\cdot g)+\partial_xs_p(t)(fg)(x)\\
&\quad\qquad-	\frac{\chi(t)}{2\pi}\int_{-\pi/T}^{\pi/T}\rho(\xi)\re^{\ri\xi x+\lambda_c(\xi)t}\left<\partial_x\widetilde\Phi_\xi,\mathcal{B}(fg)(\xi,\cdot)\right>_{L^2(0,T)}\de \xi,
\end{align*}
and
\begin{align*}
\widetilde{S}_c(t)&\left(f\cdot\partial_x g\right)(x)=-\widetilde{S}_c(t)\left(\partial_xf\cdot g\right)\\
&\qquad+\frac{(1-\chi(t))}{2\pi}\int_{-\pi/T}^{\pi/T}\rho(\xi)\re^{\ri\xi x+\lambda_c(\xi)t}\ri\xi\Phi_\xi(x)\left<\widetilde{\Phi}_\xi,\mathcal{B}(fg)(\xi,\cdot)\right>_{L^2(0,T)}\de \xi\\
&\qquad\qquad
-\frac{(1-\chi(t))}{2\pi}\int_{-\pi/T}^{\pi/T}\rho(\xi)\re^{\ri\xi x+\lambda_c(\xi)t}\Phi_\xi(x)\left<\partial_x\widetilde{\Phi}_\xi,\mathcal{B}(fg)(\xi,\cdot)\right>_{L^2(0,T)}\de \xi\\
&\qquad+\frac{\chi(t)}{2\pi}\int_{-\pi/T}^{\pi/T}\rho(\xi)\re^{\ri\xi x+\lambda_c(\xi)t}(\ri\xi)^2\left(\frac{\Phi_\xi(x)-\phi'(x)}{\ri\xi}\right)\left<\widetilde{\Phi}_\xi,\mathcal{B}(fg)(\xi,\cdot)\right>_{L^2(0,T)}\de \xi\\
&\qquad\qquad
-\frac{\chi(t)}{2\pi}\int_{-\pi/T}^{\pi/T}\rho(\xi)\re^{\ri\xi x+\lambda_c(\xi)t}\ri\xi\left(\frac{\Phi_\xi(x)-\phi'(x)}{\ri\xi}\right)\left<\partial_x\widetilde{\Phi}_\xi,\mathcal{B}(fg)(\xi,\cdot)\right>_{L^2(0,T)}\de \xi.
\end{align*}
\end{proposition}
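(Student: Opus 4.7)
The plan is to reduce both identities to purely algebraic manipulations involving three ingredients: (i) the product rule $f \partial_x g = \partial_x(fg) - (\partial_x f) g$; (ii) the Bloch-transform differentiation identity $\mathcal{B}(\partial_x h)(\xi, x) = \partial_x \check h(\xi, x) + \ri\xi \check h(\xi, x)$ derived from~\eqref{e:partialx}; and (iii) integration by parts in $x$ over the period cell $(0, T)$. Since $H^1(\R)$ is a Banach algebra in one space dimension (via the embedding $H^1 \hookrightarrow L^\infty$), the product $h := fg$ lies in $H^1(\R)$, and~\eqref{e:parseval_m} then guarantees that $\check h(\xi, \cdot) \in H^1_{\rm per}(0, T)$ for almost every $\xi$. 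Combined with the smoothness and $T$-periodicity of the adjoint Bloch eigenfunction $\widetilde{\Phi}_\xi$ supplied by Lemma~\ref{L:spec_prep}, this makes every manipulation below rigorous.

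For the first identity, by the product rule it suffices to compute $s_p(t)(\partial_x h)$. Inserting the Bloch differentiation identity into the definition of $s_p(t)$ splits the integrand into two pieces. In the first piece, integration by parts in $x$ over $(0, T)$ moves the derivative onto $\widetilde{\Phi}_\xi$ with vanishing boundary contribution (by $T$-periodicity of both $\widetilde{\Phi}_\xi$ and $\check h(\xi, \cdot)$), yielding $\langle \widetilde{\Phi}_\xi, \partial_x \check h\rangle_{L^2(0,T)} = -\langle \partial_x \widetilde{\Phi}_\xi, \check h\rangle_{L^2(0,T)}$. In the second piece, the factor $\ri\xi$ combines with $\re^{\ri\xi x}$ to reconstitute the total $x$-derivative $\partial_x s_p(t) h(x)$. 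Subtracting $s_p(t)(\partial_x f \cdot g)$ then produces exactly the first identity.

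The second identity follows the same template but requires more bookkeeping. From~\eqref{e:semidecomp2}, $\widetilde{S}_c(t)$ is the sum of a $(1-\chi(t))$-weighted piece with the eigenfunction $\Phi_\xi(x)$ inside the integrand, and a $\chi(t)$-weighted piece with the $x$-dependent weight $\ri\xi\,(\Phi_\xi(x) - \phi'(x))/(\ri\xi)$. Because these weights depend on $x$, the $\ri\xi$ contribution produced by the Bloch differentiation identity cannot be recombined with $\re^{\ri\xi x}$ into a clean $\partial_x$ of $\widetilde{S}_c(t)h$; instead it must be left inside the Bloch integral, generating the two terms in the statement carrying the extra factors $\ri\xi\,\Phi_\xi(x)$ and $(\ri\xi)^2\,(\Phi_\xi - \phi')/(\ri\xi)$. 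The same $(0, T)$-integration by parts used above then produces the two matching terms with $\langle \partial_x \widetilde{\Phi}_\xi, \mathcal{B}(fg)\rangle_{L^2(0,T)}$ and a minus sign. Assembling these four contributions and combining with $-\widetilde{S}_c(t)(\partial_x f \cdot g)$ yields the claimed identity.

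I do not anticipate any genuine obstacle: the argument is essentially structural, depending only on the smoothness of the Bloch data in both $\xi$ and $x$ and the algebra property of $H^1(\R)$. The one point to flag is the justification of differentiating under the $\xi$-integral and of the vanishing of boundary terms in the $(0, T)$-integration by parts, but both are routine given the smoothness of $\widetilde{\Phi}_\xi$ (Lemma~\ref{L:spec_prep}) and the periodicity of $\check h(\xi, \cdot)$ in $H^1_{\rm per}(0, T)$.
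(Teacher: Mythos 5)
Your proof is correct and follows essentially the same route as the paper's: both rest on the Bloch differentiation identity~\eqref{e:partialx}, the product rule, and integration by parts over the period cell; the paper simply presents the algebra starting from $\langle\partial_x\widetilde\Phi_\xi,\mathcal{B}(fg)\rangle_{L^2(0,T)}$ rather than from $s_p(t)(\partial_x(fg))$, and notes the $\widetilde{S}_c$ case is analogous without writing it out. Your observation that the $x$-dependent weights $\Phi_\xi(x)$ and $(\Phi_\xi(x)-\phi'(x))/(\ri\xi)$ prevent the $\ri\xi$ term from being absorbed into a clean $\partial_x\widetilde{S}_c(t)(fg)$ is exactly why the second identity carries the extra explicit Bloch-integral terms.
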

\begin{proof}
 The proofs of the above identities are essentially the same, so we will just prove the one for $s_p(t)$. Applying~\eqref{e:partialx} and integrating by parts gives
\begin{align*}
\left<\partial_x\widetilde\Phi_\xi,\mathcal{B}\left(f\cdot g\right)(\xi,\cdot)\right>_{L^2(0,T)}
&=-\left<\widetilde\Phi_\xi,\mathcal{B}\left(\partial_xf\cdot g+f\cdot\partial_x g\right)(\xi,\cdot)\right>_{L^2(0,T)}\\
&\qquad	+\ri\xi\left<\widetilde\Phi_\xi,\mathcal{B}\left(f\cdot g\right)(\xi,\cdot)\right>_{L^2(0,T)}.
\end{align*}
Multiplying this equality by $\rho(\xi)\re^{\ri\xi x+\lambda_c(\xi)t}$, integrating and rearranging terms yields the desired result.
\end{proof}

By combining the above identities with our previous linear estimates we obtain the following result.

\begin{lemma}\label{L:mod_bd2}
There exists a constant $C > 0$, and for all integers $\ell,j\geq 0$ there exists a constant $C_{\ell,j}>0$ such that the following inequalities hold:
\begin{align*}
  \left\|\partial_x^\ell\partial_t^j s_p(t)\left(f\cdot\partial_xg\right)\right\|_{L^2}
  & \leq C_{\ell,j}(1+t)^{-\frac{1}{4}-\frac{\ell+j}{2}}\left(\|fg\|_{L^1} + \|\partial_xf\cdot g\|_{L^1}\right),
\\
\left\|\widetilde{S}_c(t)\left(f\cdot\partial_x g\right)\right\|_{L^2}
& \leq C(1+t)^{-\frac{3}{4}}\left(\|fg\|_{L^1} + \|\partial_xf\cdot g\|_{L^1}\right),
\end{align*}
for all $f,g\in H^1(\RM)$ and $t \geq 0$, and
\begin{align*}
  \left\|\partial_x^\ell\partial_t^j s_p(t)\left(f\cdot\partial_x^2g\right)\right\|_{L^2}
  & \leq C_{\ell,j}(1+t)^{-\frac{1}{4}-\frac{\ell+j}{2}}\left(\|fg\|_{L^1} + \|\partial_xf\cdot g\|_{L^1}+\|\partial_x^2f\cdot g\|_{L^1}\right),
\\
\left\|\widetilde{S}_c(t)\left(f\cdot\partial_x^2 g\right)\right\|_{L^2}
& \leq C(1+t)^{-\frac{3}{4}}\left(\|fg\|_{L^1} + \|\partial_xf\cdot g\|_{L^1}+\|\partial_x^2f\cdot g\|_{L^1}\right),
\end{align*}
for all $f,g\in H^2(\RM)$ and $t \geq 0$.
\end{lemma}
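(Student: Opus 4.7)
My approach is to use the integration-by-parts identities of Proposition~\ref{P:parts} to shift derivatives off $g$ onto the smoothing factors $\widetilde{\Phi}_\xi$, thereby reducing both pairs of inequalities to combinations of direct applications of Lemma~\ref{L:mod_bd} together with short Parseval-type estimates patterned on the proof of that lemma.

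For the first pair of inequalities, involving $f\cdot\partial_x g$, I would apply Proposition~\ref{P:parts} once. The three resulting contributions to $s_p(t)(f\cdot\partial_x g)$ are $-s_p(t)(\partial_x f\cdot g)$, $\partial_x s_p(t)(fg)$, and the remainder
\[
R(t)(x) = -\frac{\chi(t)}{2\pi}\int_{-\pi/T}^{\pi/T}\rho(\xi)\re^{\ri\xi x+\lambda_c(\xi)t}\bigl\langle\partial_x\widetilde{\Phi}_\xi,\mathcal{B}(fg)(\xi,\cdot)\bigr\rangle_{L^2(0,T)}\de\xi.
\]
The first two are bounded by Lemma~\ref{L:mod_bd} at rate $(1+t)^{-1/4-(\ell+j)/2}$, noting that the extra $x$-derivative in the second term only improves decay. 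For the remainder, differentiating under the integral brings down a factor $(\ri\xi)^\ell\lambda_c(\xi)^j$ of size $O(|\xi|^{\ell+j})$ by~\eqref{e:lambda_c}; since $\partial_x\widetilde{\Phi}_\xi$ is uniformly bounded in $L^\infty(0,T)$ by the smoothness and analyticity asserted in Lemma~\ref{L:spec_prep}, one has the pointwise bound $|\langle\partial_x\widetilde{\Phi}_\xi,\mathcal{B}(fg)(\xi,\cdot)\rangle_{L^2(0,T)}|\lesssim\|fg\|_{L^1}$ as in~\eqref{e:sp}. Parseval's equality~\eqref{e:parseval_0} combined with the Gaussian bound $\|\xi^{\ell+j}\re^{-d\xi^2 t}\|_{L^2_\xi[-\pi/T,\pi/T)}\lesssim(1+t)^{-1/4-(\ell+j)/2}$ already used in the proof of Lemma~\ref{L:mod_bd} then closes the estimate. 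The bound for $\widetilde{S}_c(t)(f\cdot\partial_x g)$ follows identically from the second identity of Proposition~\ref{P:parts}; the extra factor of $\ri\xi$ (or equivalently $\Phi_\xi-\phi'$, which is of order $\xi$) carried by every term there produces the improved $(1+t)^{-3/4}$ rate.

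For the inequalities involving $f\cdot\partial_x^2 g$ I would iterate the procedure. A first application of Proposition~\ref{P:parts} with $\partial_x^2g=\partial_x(\partial_x g)$ reduces matters to estimating $s_p(t)(\partial_x f\cdot\partial_x g)$, $\partial_x s_p(t)(f\cdot\partial_x g)$, and a remainder term still containing $f\cdot\partial_x g$ inside the Bloch transform. A second application of Proposition~\ref{P:parts} disposes of the first two contributions, and for the surviving remainder I would use the identity $\mathcal{B}(f\cdot\partial_x g)=(\partial_x+\ri\xi)\mathcal{B}(fg)-\mathcal{B}(\partial_x f\cdot g)$ coming from~\eqref{e:partialx}, together with the anti-Hermiticity of $\partial_x+\ri\xi$ on $L^2_{\rm per}(0,T)$, to transfer the remaining derivative off $g$ and onto $\partial_x\widetilde{\Phi}_\xi$. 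The net effect is that $\langle\widetilde{\Phi}_\xi,\mathcal{B}(f\cdot\partial_x^2 g)(\xi,\cdot)\rangle_{L^2(0,T)}$ is expressed as a finite linear combination of terms $\xi^\alpha\langle\partial_x^\beta\widetilde{\Phi}_\xi,\mathcal{B}(\partial_x^\gamma f\cdot g)(\xi,\cdot)\rangle_{L^2(0,T)}$ with $\alpha+\beta+\gamma=2$, each bounded uniformly in $\xi$ by $\|\partial_x^\gamma f\cdot g\|_{L^1}$ using the uniform $L^\infty$-bounds on $\widetilde{\Phi}_\xi,\partial_x\widetilde{\Phi}_\xi,\partial_x^2\widetilde{\Phi}_\xi$ guaranteed by Lemma~\ref{L:spec_prep}. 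Substituting back into the defining integral for $s_p$ and applying the Parseval argument of the previous paragraph yields the desired rate; the $\widetilde{S}_c$ case is treated analogously.

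I do not anticipate a serious analytic obstacle beyond careful bookkeeping: the real task is to ensure that after all integrations by parts no term with an uncompensated derivative on $g$ survives, and to verify the uniform $L^\infty$-bounds on $\partial_x^k\widetilde{\Phi}_\xi$ for $k=0,1,2$ hold, which they do by the smooth analytic $\xi$-dependence and compactness of $[-\pi/T,\pi/T)$. Short-time bounds on $[0,2]$ reduce, as in the proof of Lemma~\ref{L:mod_bd}, to uniform bounds on $\chi$ and its derivatives.
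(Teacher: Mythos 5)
Your argument is correct and follows essentially the same route as the paper: both proofs reduce the first pair of inequalities to a single application of Proposition~\ref{P:parts} plus Lemma~\ref{L:mod_bd}, and both handle $f\cdot\partial_x^2 g$ by a further round of integration by parts at the Bloch level. The only cosmetic difference is that the paper first derives a closed second-order analogue of Proposition~\ref{P:parts} via the $\partial_x^2\mathcal{B}$ identity and then cleans up the residual $\partial_x g$ terms with another application of Proposition~\ref{P:parts}, whereas you iterate the first-order identity twice and treat the leftover remainder with the anti-Hermiticity of $\partial_x+\ri\xi$; unfolded, these are the same computations.
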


\begin{proof}
The first two inequalities follow directly from the identities in Proposition~\ref{P:parts} and the estimates in Lemma~\ref{L:mod_bd}. For the latter ones, we
use
\begin{align*}
\partial_x^2\mathcal{B}(f)(\xi,x)&=\mathcal{B}\left(\partial_x^2f\right)(\xi,x)-2\ri\xi\partial_x\mathcal{B}(f)(\xi,x)+\xi^2\check{f}(\xi,x)\\
&=\mathcal{B}\left(\partial_x^2f\right)(\xi,x)-2\ri\xi\mathcal{B}(\partial_x f)(\xi,x)-\xi^2\check{f}(\xi,x)
\end{align*}
to derive second-order analogues of the identities in Proposition~\ref{P:parts}.  For example, we obtain
\begin{align*}
\frac{\chi(t)}{2\pi}\int_{-\pi/T}^{\pi/T}\rho(\xi)\re^{\ri\xi x+\lambda_c(\xi)t}&\left<\partial_x^2\widetilde\Phi_\xi,\mathcal{B}(fg)(\xi,\cdot)\right>_{L^2(0,T)}\de\xi=
	s_p(t)\left(\partial_x^2f\cdot g+2\partial_xf\cdot\partial_xg+f\cdot\partial_x^2g\right)\\
&\qquad-2\partial_xs_p(t)\left(\partial_x f\cdot g+f\cdot\partial_x g\right) +\partial_x^2s_p(t)\left(fg\right).
\end{align*}
Then using the estimates in Lemma~\ref{L:mod_bd}, as well as Proposition~\ref{P:parts} to  eliminate first order derivatives on $g$, yields the estimate on $s_p(t)\left(f\cdot\partial_x^2g\right)$ and its derivatives. The estimate on $\widetilde S_c$ is obtained in the same way.
\end{proof}

\subsection{Linear stability result} \label{sec:linstab}

For the sake of completeness we state the linear stability result against localized perturbations established in~\cite{HJP20}, which can be readily obtained by combining the estimates in Lemmas~\ref{L:unmod_bd} and~\ref{L:mod_bd} with the semigroup decomposition in~\eqref{e:semidecomp1} and~\eqref{e:semidecomp2}.

\begin{theorem}[Localized Linear Stability, \cite{HJP20}]\label{t:Loc_LinStab} Let $T > 0$ and suppose $\phi$ is a smooth $T$-periodic steady solution of~\eqref{e:LLE} that is diffusively spectrally stable in the sense of Definition~\ref{Def:spec_stab}. Then, there exists a constant $C>0$ such that for any $f\in L^1(\RM)\cap L^2(\RM)$ we have
\[
\left\|\re^{A[\phi]t}f\right\|_{L^2}\leq C(1+t)^{-\frac{1}{4}}\|f\|_{L^1\cap L^2},
\]
for all $t\geq0$. Furthermore, for each $f\in L^1(\RM)\cap L^2(\RM)$ the function $\gamma(x,t) = \left(s_p(t) f\right)(x)$ is smooth in both of its variables and enjoys the estimates
\[
\|\gamma(,t)\|_{L^2}\leq C(1+t)^{-\frac{1}{4}}\|f\|_{L^1\cap L^2}, \qquad \left\|\re^{A[\phi]t}f-\phi'\gamma(t)\right\|_{L^2}\leq C(1+t)^{-\frac{3}{4}}\|f\|_{L^1\cap L^2},
\]
for all $t\geq0$.
\end{theorem}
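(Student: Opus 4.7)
The proof is essentially an assembly of the decomposition of the semigroup with the algebraic and exponential decay estimates already established. The plan is to start from the identity
\begin{equation*}
\re^{\mathcal{A}[\phi]t} f = S_c(t) f + S_e(t) f = \phi' \, s_p(t) f + \widetilde{S}_c(t) f + S_e(t) f,
\end{equation*}
obtained by combining \eqref{e:semidecomp1} and \eqref{e:semidecomp2}. Setting $\gamma(x,t) = (s_p(t)f)(x)$ then rewrites this as
\begin{equation*}
\re^{\mathcal{A}[\phi]t} f - \phi'\,\gamma(t) = \widetilde{S}_c(t) f + S_e(t) f,
\end{equation*}
which is the natural expression for the second $L^2$-bound in the theorem.

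First I would establish the bound on $\gamma$. Lemma~\ref{L:mod_bd} applied with $\ell = j = 0$ yields $\|\gamma(\cdot,t)\|_{L^2} = \|s_p(t) f\|_{L^2} \leq C(1+t)^{-1/4} \|f\|_{L^1}$ for $f \in L^1(\R) \cap L^2(\R)$, which gives the first displayed estimate on $\gamma$ after enlarging the constant $C$ to cover $\|f\|_{L^1 \cap L^2}$. Smoothness of $\gamma$ in $(x,t)$ follows directly from its defining integral representation: the cut-off function $\chi$ is smooth in $t$, the spectral data $\rho(\xi)$, $\lambda_c(\xi)$, $\widetilde\Phi_\xi$ are smooth (and bounded) in $\xi$ on the compact interval $[-\pi/T, \pi/T)$, and differentiation under the integral sign is justified since on the support of $\rho$ one has $|\xi^k \re^{\lambda_c(\xi) t}| \lesssim 1$ uniformly on compact time intervals.

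Next I would combine the exponential estimate on $S_e(t)$ from Lemma~\ref{L:unmod_bd} with $m = 0$,
\begin{equation*}
\|S_e(t) f\|_{L^2} \leq C \re^{-\mu t} \|f\|_{L^2},
\end{equation*}
and the algebraic estimate on $\widetilde{S}_c(t)$ from Lemma~\ref{L:mod_bd},
\begin{equation*}
\|\widetilde{S}_c(t) f\|_{L^2} \leq C(1+t)^{-3/4} \|f\|_{L^1}.
\end{equation*}
Since $\re^{-\mu t} \lesssim (1+t)^{-3/4}$ uniformly in $t \geq 0$, the triangle inequality yields
\begin{equation*}
\|\re^{\mathcal{A}[\phi]t} f - \phi'\,\gamma(t)\|_{L^2} \leq C(1+t)^{-3/4}\|f\|_{L^1 \cap L^2},
\end{equation*}
which is the third bound. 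Finally, the first $L^2$-estimate follows by another triangle inequality,
\begin{equation*}
\|\re^{\mathcal{A}[\phi]t}f\|_{L^2} \leq \|\phi'\|_{L^\infty}\|\gamma(t)\|_{L^2} + \|\widetilde{S}_c(t)f\|_{L^2} + \|S_e(t) f\|_{L^2},
\end{equation*}
where the slowest term decays at the claimed rate $(1+t)^{-1/4}$ and dominates for large $t$, while for small $t$ the semigroup $\re^{\mathcal{A}[\phi]t}$ is uniformly bounded on $L^2$, absorbing the prefactor.

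There is no serious obstacle here; the only small point to keep in mind is that $\chi(t) \equiv 0$ on $[0,1]$, so $\gamma(\cdot,t) \equiv 0$ and the identity $\re^{\mathcal{A}[\phi]t} f - \phi'\gamma(t) = \re^{\mathcal{A}[\phi]t} f$ on $[0,1]$ must be bounded uniformly by using the standard strong-continuity of the semigroup on $L^2$, which is compatible with the weight $(1+t)^{-3/4}$ bounded away from zero on this compact interval. All the real work has already been done in isolating $\phi'\,s_p(t)f$ as the slowest-decaying component of $S_c(t)$ in \eqref{e:semidecomp2}; the linear stability theorem is the clean statement of what that decomposition gives.
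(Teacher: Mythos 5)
Your proposal is correct and follows exactly the route the paper indicates: combine the decomposition $\re^{\mathcal{A}[\phi]t} = \phi' s_p(t) + \widetilde{S}_c(t) + S_e(t)$ from \eqref{e:semidecomp1}--\eqref{e:semidecomp2} with the algebraic decay of $s_p(t)$, $\widetilde{S}_c(t)$ (Lemma~\ref{L:mod_bd}) and the exponential decay of $S_e(t)$ (Lemma~\ref{L:unmod_bd}), then apply the triangle inequality. The only cosmetic remark is that your separate treatment of small $t$ via strong continuity is unnecessary, since the bounds in Lemmas~\ref{L:unmod_bd} and~\ref{L:mod_bd} already hold uniformly for all $t \geq 0$.
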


Theorem~\ref{t:Loc_LinStab} confirms on the linear level that if $\psi$ is a solution of~\eqref{e:LLE} which is initially close in $L^1(\RM)\cap L^2(\RM)$ to $\phi$, then for large time $\psi$ should behave approximately like~\eqref{e:loc_mod_approx}, i.e., $\psi$ should asymptotically behave like a spatio-temporal phase modulation of the underlying periodic wave $\phi$.

\section{Nonlinear Iteration Scheme} \label{sec:nonl_it}

The goal of this section is to introduce the nonlinear iteration scheme that will be employed in~\S\ref{sec:nonlinearstab} to prove our nonlinear stability result, Theorem~\ref{t:Loc_NonLinStab}.
Thus, let $\phi$ be a smooth $T$-periodic steady wave solution of the LLE~\eqref{e:LLE}, which is diffusively spectrally stable, and consider the perturbed solution $\psi(t)$ of~\eqref{e:LLE} with initial condition $\psi(0) = \phi + v_0$, where $v_0 \in L^1(\R) \cap H^4(\R)$ is sufficiently small.

In~Section~\ref{sec:unmod_pert} we study the nonlinear dynamics of the perturbation $\vt(t) = \psi(t) - \phi$, and conclude that the associated linear and nonlinear estimates are too weak to close a nonlinear iteration scheme. Hence, to account for the most critical behavior (which originates from translational invariance of the steady wave $\phi$), we introduce in~Section~\ref{sec:mod_pert} a spatio-temporal phase modulation that tracks the shift of the perturbed solution in space relative to $\phi$. We establish a nonlinear iteration scheme for the modulated perturbation and the phase modulation itself. However, this scheme does not provide control over spatial derivatives of the modulated perturbation, i.e., it exhibits a loss of derivatives. We address this loss of derivatives in~Section~\ref{sec:comp_der} using integration by parts and by appending equations for the unmodulated perturbation to the scheme.

\subsection{The Unmodulated Perturbation} \label{sec:unmod_pert}

Setting
\begin{align*} \vt(t) := \psi(t) - \phi, \end{align*}\
the unmodulated perturbation $\vt$ satisfies
\begin{align}
\left(\partial_t-\mathcal{A}[\phi]\right)\vt = \widetilde{\mathcal{N}}(\vt), \label{e:pert_un}
\end{align}
where $\mathcal{A}[\phi]$ is the linear operator defined by~\eqref{e:Aphi} and the nonlinearity $\widetilde{\mathcal{N}}$ is given by
\begin{align*}
\widetilde{\mathcal{N}}(\vt) :=\mathcal{J}\left[\left(\begin{array}{cc} 3\widetilde v_r^2+\widetilde v_i^2 & 2\widetilde v_r \widetilde v_i\\ 2\widetilde v_r \widetilde v_i & \widetilde v_r^2+3\widetilde v_i^2\end{array}\right)\phi+|\widetilde v|^2\widetilde v\right].
\end{align*}
Using the embedding $H^1(\R) \hookrightarrow L^\infty(\R)$ it is straightforward to check the following estimates on the nonlinearity $\widetilde{\mathcal{N}}$.

\begin{lemma} \label{lem:un_nonl}
For any constant $C > 0$, the inequalities
\begin{align}
\left\|\widetilde{\mathcal N}(\vt)\right\|_{L^1} &\lesssim \left\|\vt\right\|_{L^2}^2, \label{e:un_nonlL1}\\
\left\|\widetilde{\mathcal N}(\vt)\right\|_{H^4} &\lesssim \left\|\vt\right\|_{H^3} \left\|\vt\right\|_{H^2} + \left\|\vt\right\|_{H^4} \left\|\vt\right\|_{H^1}, \nonumber
\end{align}
hold for all $\vt \in H^4(\R)$ with $\|\vt\|_{H^2} \leq C$.
\end{lemma}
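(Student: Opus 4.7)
The plan is to split $\widetilde{\mathcal N}(\vt)$ into its quadratic piece $\phi\cdot Q(\vt,\vt)$ (with $Q$ a fixed bilinear form) and its cubic piece $|\vt|^2\vt$, and estimate each separately. The only ingredients I will need are: $\phi\in W^{k,\infty}(\R)$ for every $k$ (because $\phi$ is smooth and $T$-periodic); the one-dimensional Sobolev embedding $H^1(\R)\hookrightarrow L^\infty(\R)$; Hölder's inequality; the tame Moser-type product estimate $\|fg\|_{H^m}\lesssim \|f\|_{L^\infty}\|g\|_{H^m}+\|g\|_{L^\infty}\|f\|_{H^m}$; and the one-dimensional inequality $\|f\|_{L^4}^2\le \|f\|_{L^\infty}\|f\|_{L^2}$.

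For the $L^1$ bound~\eqref{e:un_nonlL1}, Cauchy--Schwarz gives $\|\phi\, Q(\vt,\vt)\|_{L^1}\lesssim \|\phi\|_{L^\infty}\|\vt\|_{L^2}^2$ for the quadratic piece, and for the cubic piece I would estimate $\||\vt|^2\vt\|_{L^1}\le\|\vt\|_{L^\infty}\|\vt\|_{L^2}^2\lesssim \|\vt\|_{H^1}\|\vt\|_{L^2}^2$. The a priori bound $\|\vt\|_{H^2}\le C$ then absorbs the offending $\|\vt\|_{H^1}$ factor into the implicit constant, yielding the desired quadratic $L^1$ estimate.

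For the $H^4$ bound I would expand all derivatives via the Leibniz rule. For the quadratic piece one encounters monomials $\partial_x^j\phi\cdot \partial_x^l\vt\cdot \partial_x^{4-j-l}\vt$ with $0\le j\le 4$ and $0\le l\le 4-j$. Every $\partial_x^j\phi$ factor is harmlessly bounded in $L^\infty$. At an ``endpoint'' distribution $l\in\{0,4-j\}$ I place the undifferentiated (or fully-differentiated-to-order-$(4-j)$) factor in $L^\infty$ via $H^1\hookrightarrow L^\infty$, producing a term of shape $\|\vt\|_{H^1}\|\vt\|_{H^{4-j}}\le \|\vt\|_{H^1}\|\vt\|_{H^4}$. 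The only genuinely ``middle'' term is $(\partial_x^2\vt)^2$ at $j=0,\,l=2$, which I would control by $\|\partial_x^2\vt\|_{L^4}^2\le \|\partial_x^2\vt\|_{L^\infty}\|\partial_x^2\vt\|_{L^2}\lesssim \|\vt\|_{H^3}\|\vt\|_{H^2}$, exactly producing the first term on the right-hand side of the claim. All remaining intermediate products are lower-order and, after invoking $\|\vt\|_{H^2}\le C$ to eat one factor, collapse into one of the two admissible shapes $\|\vt\|_{H^3}\|\vt\|_{H^2}$ or $\|\vt\|_{H^4}\|\vt\|_{H^1}$. For the cubic piece I would iterate the tame estimate twice to obtain $\||\vt|^2\vt\|_{H^4}\lesssim \|\vt\|_{L^\infty}^2\|\vt\|_{H^4}\lesssim \|\vt\|_{H^1}^2\|\vt\|_{H^4}$, then absorb one $\|\vt\|_{H^1}\le \|\vt\|_{H^2}\le C$ factor into the constant, landing in $\|\vt\|_{H^4}\|\vt\|_{H^1}$.

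There is no real obstacle here; the lemma is a straightforward exercise in the Leibniz rule and tame product estimates. The only subtlety worth flagging is the quadratic middle term $(\partial_x^2\vt)^2$, which does not yield to the generic tame estimate $\|\vt^2\|_{H^4}\lesssim\|\vt\|_{L^\infty}\|\vt\|_{H^4}$ alone and is precisely what forces the first summand $\|\vt\|_{H^3}\|\vt\|_{H^2}$ in the claimed bound; it is handled cleanly by the dimension-one interpolation $\|f\|_{L^4}^2\le\|f\|_{L^\infty}\|f\|_{L^2}$ followed by Sobolev embedding.
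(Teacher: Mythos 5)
Your proof is correct. The paper does not supply a proof of this lemma; it is dismissed in one line as a straightforward consequence of $H^1(\R)\hookrightarrow L^\infty(\R)$, so your Leibniz-rule argument is filling in details the authors omitted, and it does the job.

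Two small remarks. First, the detour through $L^4$ is unnecessary: the middle term is handled directly by $\|(\partial_x^2\vt)^2\|_{L^2}\le\|\partial_x^2\vt\|_{L^\infty}\|\partial_x^2\vt\|_{L^2}\lesssim\|\vt\|_{H^3}\|\vt\|_{H^2}$; the interpolation inequality $\|f\|_{L^4}^2\le\|f\|_{L^\infty}\|f\|_{L^2}$ you invoke is just this same step restated. Second, your closing claim that the $(\partial_x^2\vt)^2$ term \emph{forces} the summand $\|\vt\|_{H^3}\|\vt\|_{H^2}$ is a slight overstatement: applying the tame Moser estimate you listed as an ingredient directly to the quadratic piece gives $\|\vt^2\|_{H^4}\lesssim\|\vt\|_{L^\infty}\|\vt\|_{H^4}\lesssim\|\vt\|_{H^1}\|\vt\|_{H^4}$, which already subsumes everything into the second summand. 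The first summand is thus a convenient, slightly redundant bookkeeping device rather than an unavoidable obstruction. Since both summands feed into the same $(1+s)^{1/8}$ growth rate under the template weights in the proof of the main theorem, the paper loses nothing by keeping it, and neither does your proof.
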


The local existence and uniqueness of the perturbation $\vt(t)$ as a solution to~\eqref{e:pert_un} is an immediate consequence of the existence of the semigroup $\re^{\mathcal{A}[\phi]t}$ acting on $H^2(\R)$, the estimates above on the nonlinearity $\widetilde{\mathcal{N}}$, and  classical local existence theory for semilinear evolution problems; see, for instance,~\cite[Proposition~4.3.9]{CA98} and~\cite[Theorem~6.1.3]{Pazy}.

\begin{proposition}[Local Theory for the Unmodulated Perturbation] \label{p:local_unmod} For any $v_0 \in  H^4(\R)$, there exists a maximal time $T_{\max} \in (0,\infty]$ such that~\eqref{e:pert_un} admits a unique solution
\begin{align}\vt \in C\big([0,T_{\max}),H^4(\R)\big) \cap C^1\big([0,T_{\max}),H^2(\R)\big), \label{e:regv}\end{align}
with initial condition $\vt(0) =v_0$. In addition, if $T_{\max} < \infty$, then
\begin{align} \lim_{t \uparrow T_{\max}} \left\|\vt(t)\right\|_{H^2} = \infty. \label{e:vtblowup}\end{align}
\end{proposition}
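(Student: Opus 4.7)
The plan is to invoke the classical local existence theory for semilinear evolution equations cited by the authors, applied to the Duhamel formulation
\[
\vt(t) = \re^{\mathcal{A}[\phi]t} v_0 + \int_0^t \re^{\mathcal{A}[\phi](t-s)} \widetilde{\mathcal{N}}(\vt(s))\,\de s.
\]
First, I would note that by Lemma~\ref{L:Bloch_sg} combined with Parseval's identity~\eqref{e:parseval_m} and the decomposition~\eqref{e:semidecomp1}, the operator $\mathcal{A}[\phi]$ generates a $C^0$-semigroup on $H^m(\R)$ for every $m\in\NM_0$. Second, since $H^2(\R)$ is a Banach algebra continuously embedded in $L^\infty(\R)$, the argument behind Lemma~\ref{lem:un_nonl} shows that $\widetilde{\mathcal{N}}\colon H^2(\R)\to H^2(\R)$ is locally Lipschitz, with
\[
\|\widetilde{\mathcal N}(\vt_1)-\widetilde{\mathcal N}(\vt_2)\|_{H^2} \lesssim \bigl(1 + \|\vt_1\|_{H^2}+\|\vt_2\|_{H^2}\bigr)^2\|\vt_1-\vt_2\|_{H^2}.
\]
A standard contraction-mapping argument on the closed ball of radius $R\simeq\|v_0\|_{H^2}$ in $C([0,T^*],H^2(\R))$, with $T^*=T^*(\|v_0\|_{H^2})$ chosen small enough, then produces a unique local mild solution $\vt\in C([0,T^*],H^2(\R))$.

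Next, I would bootstrap to the stronger regularity $\vt\in C([0,T_{\max}),H^4(\R))$. Since $v_0\in H^4(\R)$ and the tame bound in Lemma~\ref{lem:un_nonl} gives $\|\widetilde{\mathcal N}(\vt)\|_{H^4}\lesssim\|\vt\|_{H^3}\|\vt\|_{H^2}+\|\vt\|_{H^4}\|\vt\|_{H^1}$, applying the $H^4$-bounded semigroup to the Duhamel formula and invoking Gronwall's inequality yields that $\|\vt(t)\|_{H^4}$ remains finite on every subinterval of $[0,T_{\max})$ on which $\|\vt(t)\|_{H^2}$ is bounded. Having $\vt\in C([0,T_{\max}),H^4(\R))$, the $C^1$-regularity in $H^2$ is then read off directly from equation~\eqref{e:pert_un}: the map $\mathcal{A}[\phi]\colon H^4(\R)\to H^2(\R)$ is bounded and $\widetilde{\mathcal{N}}\colon H^4(\R)\to H^4(\R)\hookrightarrow H^2(\R)$ is continuous, so $\partial_t\vt=\mathcal A[\phi]\vt+\widetilde{\mathcal N}(\vt)\in C([0,T_{\max}),H^2(\R))$.

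Finally, I would establish the blow-up alternative by the usual maximality argument. Since the local existence time $T^*(R)$ produced by the contraction scheme depends only on the $H^2$-norm of the initial datum, if there existed a constant $M>0$ and a sequence $t_n\uparrow T_{\max}$ with $\|\vt(t_n)\|_{H^2}\leq M$, then restarting the iteration at $t=t_n$ would extend $\vt$ to the interval $[t_n,t_n+T^*(M)]$. Choosing $n$ large enough that $t_n+T^*(M)>T_{\max}$ contradicts the maximality of $T_{\max}$, forcing $\lim_{t\uparrow T_{\max}}\|\vt(t)\|_{H^2}=\infty$.

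The only step requiring care is the bootstrap to $H^4$, where one must confirm that the tame estimate in Lemma~\ref{lem:un_nonl} together with control of the lower-order norm $\|\vt(t)\|_{H^2}$ on $[0,T_{\max})$ is enough to keep $\|\vt(t)\|_{H^4}$ finite on the same interval via a linear Gronwall argument; the remainder is essentially a direct citation of \cite[Proposition~4.3.9]{CA98} and \cite[Theorem~6.1.3]{Pazy}.
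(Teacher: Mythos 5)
Your proposal reconstructs the argument behind the paper's one-sentence citation to classical semilinear theory, and it follows essentially the same route: set up a mild solution in $H^2(\R)$ via contraction (where the semigroup acts and the nonlinearity is locally Lipschitz by the algebra property), use the tame $H^4$-estimate from Lemma~\ref{lem:un_nonl} plus Gronwall to persist $H^4$-regularity, and extract the $H^2$-blow-up alternative from the fact that the local existence time depends only on $\|\vt\|_{H^2}$. This is exactly what \cite[Proposition~4.3.9]{CA98} and \cite[Theorem~6.1.3]{Pazy} package together, and the paper simply invokes them.

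Two steps are phrased a bit too casually, though neither is a serious gap. First, the bootstrap to $H^4$ presupposes $\vt(s)\in H^4$ in order to even write down the $H^4$-Duhamel estimate; the clean fix is to run a parallel contraction in $C([0,\tau],H^4)$ on a possibly shorter interval, identify the solutions by $H^2$-uniqueness, and then use the Gronwall bound to show the $H^4$ solution exists up to the full $H^2$-maximal time. Second, the $C^1$-regularity is not literally ``read off from the equation'': knowing that $\mathcal A[\phi]\vt+\widetilde{\mathcal N}(\vt)$ is continuous in $H^2$ does not establish that $\partial_t\vt$ exists. One must differentiate the Duhamel formula, using $v_0\in H^4=D(\mathcal A[\phi])$ for the homogeneous term and the fact that the convolution term lies in $H^4$ (which the bootstrap supplies) together with the time-continuity of $\widetilde{\mathcal N}(\vt(\cdot))$. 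This is precisely the content of the cited theorems, which you acknowledge at the end; rephrasing the $C^1$-step as an application of those theorems rather than a direct consequence of the equation would close the loop.
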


Ideally, one would hope to control the perturbation $\vt(t)$ over time, and prove that~\eqref{e:vtblowup} cannot occur, which implies that $\vt(t)$, and thus $\psi(t)$, are global solutions.
Naively, one might expect this to be accomplished by integrating~\eqref{e:pert_un} and bounding the perturbation $\vt(t)$ iteratively using the Duhamel formulation
\begin{align}
\vt(t)=\re^{\mathcal{A}[\phi]t}v_0+\int_0^t\re^{\mathcal{A}[\phi](t-s)}\widetilde{\mathcal{N}}(\vt(s))\de s, \label{e:intvt}
\end{align}
for $t \in [0,T_{\max})$. However, as outlined in Remark~\ref{rem:fail} below the temporal bounds on the semigroup $\re^{\mathcal{A}[\phi]t}$,
established in Section~\ref{sec:linear_est}, are too weak to close the resulting nonlinear iteration scheme. Thus, in order to obtain faster linear decay rates, we introduce a spatio-temporal phase modulation $\gamma(x,t)$ for the perturbed solution $\psi$ in the next subsection, which accounts for the most critical behavior of the linear solution operator.

\begin{remark} \label{rem:fail} \upshape
The estimates on the critical component $S_c(t)$ of the semigroup $\re^{\mathcal{A}[\phi]t}$ in Lemma~\ref{L:mod_bd} indicate that the perturbation $\vt(t)$ decays in $H^1(\R)$ at most with rate $(1+t)^{-1/4}$. Thus, in an attempt to close a nonlinear iteration scheme, it makes sense to take $t > 0$ and assume that we have indeed $\|\vt(s)\|_{H^1} \lesssim (1+s)^{-1/4}$ for $s \in [0,t)$. For the next iteration, we then need to show that the right-hand side of~\eqref{e:intvt} decays at least with rate $(1+t)^{-1/4}$. However, Lemma~\ref{L:mod_bd} and estimate~\eqref{e:un_nonlL1} are insufficient to bound the contribution
$$\int_0^t \phi' s_p(t-s) \widetilde{\mathcal N}(\vt(s)) \de s,$$
occurring on the right-hand side of~\eqref{e:intvt}, where we recall the decomposition~\eqref{e:semidecomp2} of critical component $S_c(t)$ of the semigroup. Indeed, one finds the latter to be bounded by
\begin{align*} \int_0^t (1+t-s)^{-\frac{1}{4}}(1+s)^{-\frac{1}{2}} \de s \lesssim (1+t)^{\frac{1}{4}}. \end{align*}
We conclude that the temporal bounds on the critical component $S_c(t)$ of the semigroup $\re^{\mathcal{A}[\phi]t}$, established in Lemmas~\ref{L:mod_bd} are too weak to close a nonlinear iteration scheme.
This is no surprise as the same bounds on the semigroup and nonlinearity can be obtained for the nonlinear heat equation $u_t = u_{xx} + u^2$ in which all
nonnegative, nontrivial initial data in $H^1(\R)$  blow up in finite time~\cite{FU66}.
\end{remark}

\subsection{The Modulated Perturbation} \label{sec:mod_pert}

We now introduce the modulated perturbation by taking
\begin{align}
v(x,t) = \psi(x-\gamma(x,t),t) - \phi(x),
\label{e:modulated}\end{align}
in which the spatio-temporal phase modulation $\gamma(x,t)$ satisfies $\gamma(\cdot,0)=0$, i.e., it vanishes identically at $t=0$. Substituting~\eqref{e:modulated} into the LLE~\eqref{e:LLE}, we obtain the equation
\begin{align}
\left(\partial_t-\mathcal{A}[\phi]\right)\left(v+\gamma\phi'\right)=\mathcal{N}\left(v,\gamma,\partial_t \gamma\right)
	+\left(\partial_t-\mathcal{A}[\phi]\right)\left(\gamma_x v\right), \label{e:pert_mod}
\end{align}
where
\begin{align}
\mathcal{N}(v,\gamma,\gamma_t)=\mathcal{Q}(v,\gamma)+\partial_x \mathcal{R}(v,\gamma,\gamma_t), \label{e:decompN}
\end{align}
with
\[
\mathcal{Q}(v,\gamma)=\left(1-\gamma_x\right)\mathcal{J}\left[\left(\begin{array}{cc} 3v_r^2+v_i^2 & 2v_rv_i\\ 2v_r v_i & v_r^2+3v_i^2\end{array}\right)\phi+|v|^2v\right],
\]
and
\begin{align*}
\mathcal{R}(v,\gamma,\gamma_t)=-\gamma_tv-\beta \mathcal{J}\left[\gamma_{xx}v+2\gamma_xv_x+\frac{\gamma_x^2}{1-\gamma_x}\left(\phi'+v_x\right)\right].
\end{align*}
Using the embedding $H^1(\R) \hookrightarrow L^\infty(\R)$ it is straightforward to check the following estimate on the nonlinearity $\mathcal N$ in~\eqref{e:pert_mod}.

\begin{lemma} \label{lem:mod_nonlL2}
Fix a constant $C > 0$. The inequality
\begin{align*}
\left\|\mathcal{N}(v,\gamma,\gamma_t)\right\|_{L^2} &\lesssim \left\|v\right\|_{L^2} \left\|v\right\|_{H^1} + \left\|(\gamma_x,\gamma_t)\right\|_{H^2 \times H^1} \left(\left\|v\right\|_{H^2} + \left\|\gamma_x\right\|_{L^2}\right),
\end{align*}
holds for $v \in H^2(\R)$ and $(\gamma,\gamma_t) \in H^3(\R) \times H^1(\R)$ satisfying $\|v\|_{H^1} \leq C$ and $\|\gamma\|_{H^2} \leq \frac{1}{2}$.
\end{lemma}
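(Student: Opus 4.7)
The plan is to decompose $\mathcal N = \mathcal Q + \partial_x \mathcal R$ as in~\eqref{e:decompN} and bound each contribution in $L^2$ by expanding all products, placing one factor of each monomial in $L^2$ (or $H^2$), and controlling the remaining factors in $L^\infty$ via the Banach-algebra embedding $H^1(\R)\hookrightarrow L^\infty(\R)$. Throughout, the hypothesis $\|\gamma\|_{H^2}\leq \tfrac12$ gives $\|\gamma_x\|_{L^\infty}\lesssim \|\gamma_x\|_{H^1}\leq \tfrac12$, so that $|1-\gamma_x|\geq \tfrac12$ uniformly. In particular, writing $g(y):=y^2/(1-y)$, the scalar map $g$ is smooth on $[-\tfrac12,\tfrac12]$ with $|g(y)|\lesssim y^2$ and $|g'(y)|\lesssim|y|$ there, and $\phi\in C^\infty_{\mathrm{per}}(0,T)$ together with all of its derivatives is bounded in $L^\infty$.

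For $\mathcal Q(v,\gamma)$, I would use that $(1-\gamma_x)$ is uniformly bounded in $L^\infty$, reducing matters to controlling the $L^2$-norm of the bracket. The quadratic-in-$v$ contribution satisfies $\|v^2\phi\|_{L^2}\lesssim \|\phi\|_{L^\infty}\|v\|_{L^\infty}\|v\|_{L^2}\lesssim\|v\|_{H^1}\|v\|_{L^2}$, while the cubic contribution satisfies $\||v|^2v\|_{L^2}\lesssim \|v\|_{L^\infty}^2\|v\|_{L^2}\lesssim \|v\|_{H^1}^2\|v\|_{L^2}$, which by the a priori bound $\|v\|_{H^1}\leq C$ still fits into $\|v\|_{H^1}\|v\|_{L^2}$.

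For $\partial_x\mathcal R$, a direct application of the product and chain rule yields
\[
\partial_x\mathcal R = -\gamma_{xt}v-\gamma_t v_x-\beta\mathcal J\!\left[\gamma_{xxx}v+3\gamma_{xx}v_x+2\gamma_x v_{xx}+g'(\gamma_x)\gamma_{xx}(\phi'+v_x)+g(\gamma_x)(\phi''+v_{xx})\right].
\]
Each summand is a product of $\gamma$-factors and either a derivative of $v$ or a smooth periodic function, and I would estimate each as follows: the $\gamma_t$-terms contribute $\|\gamma_t\|_{H^1}\|v\|_{H^1}$ after placing $\gamma_t$ or $\gamma_{xt}$ in $L^2$ and $v$ or $v_x$ in $L^\infty$; the terms linear in $\gamma_{xxx},\gamma_{xx},\gamma_x$ multiplying $v,v_x,v_{xx}$ respectively contribute $\|\gamma_x\|_{H^2}\|v\|_{H^2}$; the quadratic-in-$\gamma_x$ terms multiplied by $\phi'$ or $\phi''$ are bounded, using $|g(\gamma_x)|\lesssim\gamma_x^2$ and $|g'(\gamma_x)\gamma_{xx}|\lesssim|\gamma_x||\gamma_{xx}|$ together with $\|\gamma_{xx}\|_{L^\infty}\lesssim\|\gamma_x\|_{H^2}$, by $\|\gamma_x\|_{H^2}\|\gamma_x\|_{L^2}$; finally, the quadratic-in-$\gamma_x$ terms multiplied by $v_x$ or $v_{xx}$ are handled by investing one factor of $\|\gamma_x\|_{L^\infty}\leq\tfrac12$ in $L^\infty$, which yields a bound of the form $\|\gamma_x\|_{H^2}\|v\|_{H^2}$. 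Summing all contributions gives the claimed inequality.

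The main obstacle here is really bookkeeping rather than analytic depth: the delicate point is to ensure that every term is genuinely quadratic in $(v,\gamma_x,\gamma_t)$ by exploiting that $g$ vanishes to second order and $g'$ to first order at the origin, and that whenever the highest-order factor $v_{xx}$ appears it is multiplied by something containing a factor of $\gamma_x$ which can be absorbed in $L^\infty$ using the smallness of $\|\gamma_x\|_{L^\infty}$, so that only a single factor of $\|v\|_{H^2}$ and a single factor of $\|\gamma_x\|_{H^2}$ (or $\|\gamma_x\|_{L^2}$) survive on the right-hand side.
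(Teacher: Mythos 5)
Your proof is correct, and it supplies exactly the details that the paper declares ``straightforward to check'' using the embedding $H^1(\R)\hookrightarrow L^\infty(\R)$: the paper gives no explicit proof of this lemma, and your bookkeeping --- expanding $\partial_x\mathcal R$ via the product and chain rule, using that $g(y)=y^2/(1-y)$ vanishes to second order and $g'$ to first order on $[-\tfrac12,\tfrac12]$, and distributing $L^2$ and $L^\infty$ factors so that each monomial is quadratic in $(v,\gamma_x,\gamma_t)$ --- is precisely the intended argument.
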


Integrating~\eqref{e:pert_mod} yields the Duhamel formulation
\begin{align}
v(t)+\gamma(t)\phi' = \re^{\mathcal{A}[\phi]t}v_0 + \int_0^t \re^{\mathcal{A}[\phi](t-s)}\mathcal{N}(v(s),\gamma(s),\partial_t \gamma(s))\de s + \gamma_x(t)v(t), \label{e:intv}
\end{align}
where we used the property that both $\gamma(\cdot,0)$ and $\gamma_x(\cdot,0)$ are identically zero. We grouped terms that are nonlinear in $v$, $\gamma$ and their derivatives on the right-hand side of~\eqref{e:intv}, whereas the left-hand side contains all contributions that are linear in $v$, $\gamma$ and their derivatives. The key idea is to make a judicious choice for $\gamma(t)$ such that the linear term $\gamma(t)\phi'$ compensates for the most critical nonlinear contributions in~\eqref{e:intv}. For this, we recall from~\eqref{e:semidecomp1} and~\eqref{e:semidecomp2} that the semigroup $\re^{\mathcal{A}[\phi]t}$ can be decomposed as
\begin{align}
\re^{\mathcal{A}[\phi]t}= \phi' s_p(t) +\widetilde{S}(t), \label{e:semdecomp}
\end{align}
with
\[\widetilde{S}(t) := \widetilde{S}_c(t) + S_e(t).\]
By Lemmas~\ref{L:unmod_bd} and~\ref{L:mod_bd} the slowest temporal decay in~\eqref{e:semdecomp} is exhibited by $\phi' s_p(t)$. This recommends the (implicit) choice
\begin{align}
\gamma(t) = s_p(t)v_0 + \int_0^t s_p(t-s)\mathcal{N}(v(s),\gamma(s),\partial_t \gamma(s))\de s. \label{e:intgamma}
\end{align}
We use this equality as a definition for $\gamma$. Noting that the modulated perturbation $v$ can be written in terms of the unmodulated perturbation $\vt$ as
\begin{align}
v(x,t) = \vt(x-\gamma(x,t),t) + \phi(x-\gamma(x,t)) - \phi(x). \label{e:modulated2}
\end{align}
the equality~\eqref{e:intgamma} (implicitly) defines $\gamma$ as a function of the unmodulated perturbation $\vt$. The existence and uniqueness of a local solution $\gamma$, for a given $\vt$, is established in the following result.

\begin{proposition}[Local Theory for the Phase Modulation] \label{p:gamma}
For $\vt$ and $T_{\max}$ as in Proposition~\ref{p:local_unmod}, there exists a maximal time $\tau_{\max} \in (0,T_{\max}]$ such that~\eqref{e:intgamma} with $v$ given by~\eqref{e:modulated2} has a unique solution
\begin{align*}\gamma \in C\big([0,\tau_{\max}),H^4(\R)\big) \cap C^1\big([0,\tau_{\max}),H^2(\R)\big), \end{align*}
with $\gamma(0)=0$. In addition, if $\tau_{\max} < T_{\max}$, then
\begin{align*} \lim_{t \uparrow \tau_{\max}} \left\|\left(\gamma(t),\partial_t \gamma(t)\right)\right\|_{H^4 \times H^2} = \infty.\end{align*}
\end{proposition}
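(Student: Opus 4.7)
The proof will be a contraction mapping argument. For $\tau \in (0, T_{\max})$ and $R > 0$, consider the complete metric space
\[X_{\tau,R} := \bigl\{\gamma \in C([0,\tau], H^4(\R)) \cap C^1([0,\tau], H^2(\R)) : \gamma(0) = 0,\ \|\gamma\|_X \leq R\bigr\},\]
equipped with the norm $\|\gamma\|_X := \sup_{t \in [0,\tau]} \bigl(\|\gamma(t)\|_{H^4} + \|\partial_t \gamma(t)\|_{H^2}\bigr)$, and define on it the operator
\[\mathcal{T}(\gamma)(t) := s_p(t) v_0 + \int_0^t s_p(t-s)\, \mathcal{N}\bigl(v(s), \gamma(s), \partial_t \gamma(s)\bigr)\, \de s,\]
where $v$ is determined by $\gamma$ and the (already constructed) unmodulated perturbation $\vt$ through~\eqref{e:modulated2}. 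Choosing $R < 1/2$ and using the Sobolev embedding $H^2(\R) \hookrightarrow L^\infty(\R)$ guarantees that $x \mapsto x - \gamma(x,t)$ is a global $C^4$-diffeomorphism of $\R$ for every $t \in [0,\tau]$, so the composition in~\eqref{e:modulated2} is well-defined.

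Standard Moser-type composition estimates, together with smoothness of $\phi$, yield $v \in C([0,\tau], H^4(\R))$ with $\|v(t)\|_{H^4} \leq C_R(\|\vt(t)\|_{H^4} + \|\gamma(t)\|_{H^4})$, and analogous Lipschitz bounds in $\gamma$. Combined with $H^1 \hookrightarrow L^\infty$ and the explicit form of $\mathcal{Q}$ and $\mathcal{R}$ from~\eqref{e:decompN}, this implies $\|\mathcal{N}(v(s),\gamma(s),\partial_t\gamma(s))\|_{L^2} \leq C_R(1 + \|\vt\|_{L^\infty_t H^4}^2)$ uniformly on $[0,\tau]$, with matching Lipschitz dependence on $\gamma$. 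Applying Lemma~\ref{L:mod_bd} with $\ell \leq 4$, $j = 0$ and with $\ell \leq 2$, $j = 1$ — and crucially observing that $\chi(0) = 0$ forces $s_p(0) = 0$, so that $\partial_t$ may be brought under the Duhamel integral without generating a boundary term and $\partial_t \mathcal{T}(\gamma)$ is given by an explicit, rather than implicit, formula — one obtains a bound of the form
\[\|\mathcal{T}(\gamma)\|_X \leq C\|v_0\|_{L^1 \cap L^2} + C_R\, \tau,\]
with analogous Lipschitz estimates on differences $\mathcal{T}(\gamma) - \mathcal{T}(\widetilde\gamma)$. Setting $R := 2C\|v_0\|_{L^1 \cap L^2}$ (small by assumption) and then shrinking $\tau$ renders $\mathcal{T}$ both a self-map of $X_{\tau,R}$ and a strict contraction.

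Banach's fixed-point theorem then produces a unique $\gamma \in X_{\tau,R}$ solving~\eqref{e:intgamma}. A standard continuation argument extends $\gamma$ to a maximal existence time $\tau_{\max} \in (0, T_{\max}]$: if $\tau_{\max} < T_{\max}$ but $\|(\gamma(t), \partial_t\gamma(t))\|_{H^4 \times H^2}$ remained uniformly bounded as $t \uparrow \tau_{\max}$, one could restart the fixed-point argument at some $t_0 < \tau_{\max}$ sufficiently close to $\tau_{\max}$ and extend $\gamma$ past $\tau_{\max}$, contradicting maximality; this yields the stated blow-up alternative.

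The principal technical hurdle is the Moser-type composition estimate needed to control $\vt(\cdot - \gamma(\cdot,t), t)$ and $\phi \circ (\cdot - \gamma) - \phi$ in $H^4(\R)$, together with matching Lipschitz bounds in $\gamma$. These are classical but must be executed carefully since up to four spatial derivatives need to be pushed through the composition with only $H^4$-control on $\gamma$. By contrast, the infinite smoothing of $s_p$ established in Lemma~\ref{L:mod_bd} renders the linear part of the argument essentially automatic, so that all the real analytic effort is concentrated in the nonlinear composition and in the nonlinearity $\mathcal{N}$ itself.
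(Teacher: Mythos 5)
Your overall strategy --- a Banach fixed-point argument in a ball of a $C^1$-in-time space, Lipschitz estimates on the nonlinearity, a continuation argument for the blow-up alternative, and the observation that $\chi(0)=0$ forces $s_p(0)=0$ so that $\partial_t$ passes under the Duhamel integral without a boundary term --- coincides with the paper's route in Appendix~B. However, there is one genuine gap and one nontrivial detour.

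\textbf{The gap.} You set $R := 2C\|v_0\|_{L^1 \cap L^2}$ and call it ``small by assumption,'' but Proposition~\ref{p:gamma} inherits its data from Proposition~\ref{p:local_unmod}, which is stated for \emph{arbitrary} $v_0 \in H^4(\R)$; no smallness is available here. Your argument also needs $R < \frac{1}{2}$ both for the diffeomorphism and because $\mathcal{N}$ contains $\gamma_x^2/(1-\gamma_x)$, so for large $v_0$ you cannot simultaneously satisfy $R \geq C\|v_0\|$ and $R < \frac12$. The repair is exactly the observation you made in passing but did not exploit: since $\chi \equiv 0$ on $[0,1]$, not only does $s_p(0) = 0$, but $s_p(t) \equiv 0$ for all $t \in [0,1]$. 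Restricting to $\tau \leq 1$ therefore kills the linear term $s_p(t)v_0$ entirely, and the self-map estimate becomes $\|\mathcal{T}(\gamma)\|_X \leq C_R\,\tau$, so $R$ can be fixed small (say $R = 1/4$) and $\tau$ shrunk, independently of $\|v_0\|$. This is precisely the role of the cut-off $\chi$ highlighted in Remark~\ref{rem:cut-off}; the same remark is needed for the restarted fixed-point map in the continuation step, where one must keep $\|\gamma\|_{H^2} < \frac12$.

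\textbf{The detour.} You invoke Moser-type composition estimates to obtain $v(t) \in H^4$, which forces you to require that $x \mapsto x - \gamma(x,t)$ be a global diffeomorphism (so that $\vt_{xxxx}(\cdot - \gamma) \in L^2$ can be controlled by a change of variables). The paper's Lemma~\ref{L:loc_gamma} instead estimates $V(\gamma,t)$ only in $H^2$, using $H^4(\R) \hookrightarrow C_b^3(\R)$ together with the mean value theorem to obtain the Lipschitz bound $\|V(\gamma_1,t)-V(\gamma_2,t)\|_{H^2} \lesssim \|\vt(t)+\phi\|_{W^{3,\infty}}\|\gamma_1-\gamma_2\|_{H^2}$. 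This weaker $H^2$ control is all that is needed --- the nonlinearity $\mathcal{N}$ only involves $v$, $v_x$, $v_{xx}$, and the smoothing $s_p(t)\colon L^2 \to H^4$ (Lemma~\ref{L:mod_bd}) supplies the regularity of $\gamma$ --- and it sidesteps the diffeomorphism requirement altogether, since the mean-value argument makes no reference to a change of variables. Your route is not incorrect, but it carries unnecessary technical weight.

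A smaller structural difference: the paper formulates the fixed-point problem as a coupled integral system for the pair $(\gamma,\gamma_t)$ in $C([0,\tau],H^4\times H^2)$ and only afterwards identifies $\gamma_t = \partial_t\gamma$ using $s_p(0)=0$; you work directly in $C \cap C^1$. These are equivalent, and your version is fine provided the $s_p(0)=0$ observation is used, as you do.
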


We prove this proposition  in  Appendix~\ref{app:local}.
Given now the phase modulation $\gamma(t)$ in Proposition~\ref{p:gamma} and the unmodulated perturbation $\vt(t)$  in Proposition~\ref{p:local_unmod}, the modulated perturbation $v(t)$ is uniquely determined by~\eqref{e:modulated2}. More precisely, we have the following result.

\begin{corollary}[The Modulated Perturbation] \label{C:local_v}
For $\vt$ as in Proposition~\ref{p:local_unmod} and $\gamma$ and $\tau_{\max}$ given by Proposition~\ref{p:gamma}, the modulated perturbation $v$ defined by~\eqref{e:modulated2} satisfies $v \in C\big([0,\tau_{\max}),H^2(\R)\big)$. Moreover, the Duhamel formulation~\eqref{e:intv} holds for $t \in [0,\tau_{\max})$.
\end{corollary}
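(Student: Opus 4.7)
The corollary has two parts: (i) $v \in C([0,\tau_{\max}), H^2(\R))$; and (ii) the Duhamel formula~\eqref{e:intv} holds on $[0,\tau_{\max})$.

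For (i), I would decompose $v = w_1 + w_2$ where $w_1(x,t):=\vt(y(x,t),t)$ and $w_2(x,t):=\phi(y(x,t))-\phi(x)$, with $y(x,t):=x-\gamma(x,t)$. Since $\gamma(0)=0$ and $\gamma\in C([0,\tau_{\max}),H^4(\R))\hookrightarrow C([0,\tau_{\max}),W^{1,\infty}(\R))$, the construction in Proposition~\ref{p:gamma} ensures that $\|\gamma_x(t)\|_{L^\infty}<1$ on every compact subinterval of $[0,\tau_{\max})$, so that $y(\cdot,t)$ is a smooth diffeomorphism of $\R$ with uniformly bounded inverse Jacobian. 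A chain-rule computation expresses $\partial_x^k w_1$ for $k=0,1,2$ as sums of products of derivatives of $\gamma$ (up to order $k$) and derivatives of $\vt$ evaluated at $y$. Combined with a change of variables $y=y(\cdot,t)$ and the embedding $H^1(\R)\hookrightarrow L^\infty(\R)$, this yields $\|w_1(\cdot,t)\|_{H^2}\lesssim \|\vt(\cdot,t)\|_{H^2}$. For $w_2$, the $L^2$-bound follows from the mean value theorem together with the smoothness and periodicity of $\phi$, giving $\|w_2\|_{L^2}\lesssim \|\gamma\|_{L^2}\|\phi'\|_{L^\infty}$; the first two derivatives are handled analogously, yielding $\|w_2(\cdot,t)\|_{H^2}\lesssim \|\gamma(t)\|_{H^2}$.

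For continuity in $t$, I would use the telescoping identity
\[
v(t_1)-v(t_2) = \bigl[\vt(y(\cdot,t_1),t_1)-\vt(y(\cdot,t_2),t_1)\bigr] + \bigl[\vt(y(\cdot,t_2),t_1)-\vt(y(\cdot,t_2),t_2)\bigr] + \bigl[w_2(\cdot,t_1)-w_2(\cdot,t_2)\bigr].
\]
The middle bracket is bounded in $H^2$ by $\|\vt(t_1)-\vt(t_2)\|_{H^2}$ (times a constant controlled by $\|\gamma(t_2)\|_{H^4}$) via the change of variables $y=y(\cdot,t_2)$, and tends to zero by Proposition~\ref{p:local_unmod}. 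The first and third brackets are controlled by $\|\gamma(t_1)-\gamma(t_2)\|_{H^2}$ (using $\vt(\cdot,t_1)\in H^4$ and smoothness of $\phi$), and tend to zero by Proposition~\ref{p:gamma}.

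For (ii), the derivation in Section~\ref{sec:mod_pert} shows that substituting the identity $\psi(y(x,t),t)=\phi(x)+v(x,t)$ into~\eqref{e:LLE} and applying the chain rule yields the pointwise identity
\[
(\partial_t-\mathcal{A}[\phi])(v+\gamma\phi'-\gamma_x v)=\mathcal{N}(v,\gamma,\partial_t\gamma),
\]
where both sides lie in $C([0,\tau_{\max}),L^2(\R))$ thanks to part (i), Lemma~\ref{lem:mod_nonlL2}, and the regularity $\vt\in C^1([0,T_{\max}),H^2(\R))$, $\gamma\in C^1([0,\tau_{\max}),H^2(\R))$. Since $v(0)=v_0$ and $\gamma(0)=\gamma_x(0)=0$, Duhamel's principle for the $C^0$-semigroup $\re^{\mathcal{A}[\phi]t}$ on $L^2(\R)$ produces~\eqref{e:intv} after rearrangement. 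The main obstacle is the continuity portion of (i): composing a $t$-dependent $H^4$-function with a $t$-dependent diffeomorphism close to the identity requires the telescoping argument and change-of-variables bookkeeping just outlined; once this is done, the pointwise PDE and Duhamel step in (ii) are routine.
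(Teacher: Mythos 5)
Your proposal is essentially correct in spirit, and both parts are handled with the right ideas. However, the paper proves the $H^2$-regularity and continuity of $v$ via Lemma~\ref{L:loc_gamma}, which uses a pure mean-value-theorem argument: one bounds $\|V(\gamma_1,t)-V(\gamma_2,t)\|_{H^2}$ by $\|\vt(t)+\phi\|_{W^{3,\infty}}\|\gamma_1-\gamma_2\|_{H^2}$, and well-definedness then follows immediately from $V(0,t)=\vt(t)\in H^2$, while time-continuity follows from the identity $V(\gamma,t)-V(\gamma,s)=\bigl[(V(\gamma,t)-V(\gamma,s))-(V(0,t)-V(0,s))\bigr]+\bigl[V(0,t)-V(0,s)\bigr]$ together with $\vt\in C([0,T_{\max}),C_b^3)$. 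This route never invokes a change of variables, and in particular does not require $y(\cdot,t)=\mathrm{id}-\gamma(\cdot,t)$ to be a diffeomorphism. Your route instead pulls $\|w_1(\cdot,t)\|_{H^2}\lesssim\|\vt(\cdot,t)\|_{H^2}$ via a change of variables, which forces you to posit $\|\gamma_x(t)\|_{L^\infty}<1$. That claim does not actually follow from the \emph{statement} of Proposition~\ref{p:gamma}, which only asserts $\gamma\in C([0,\tau_{\max}),H^4)$ with $\gamma(0)=0$ and an $H^4\times H^2$ blow-up criterion — continuity plus $\gamma(0)=0$ gives smallness of $\gamma_x$ only near $t=0$, and the blow-up condition does not fire if $\gamma_x$ approaches $1$ while $\|\gamma\|_{H^4}$ stays bounded. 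One can rescue your argument by appealing to the structure of the nonlinearity $\mathcal R$ (singular at $\gamma_x=1$) and the construction in Appendix~\ref{app:local}, but the paper's MVT route is cleaner precisely because it sidesteps the issue, and you should either adopt it or explicitly close the gap. Part (ii) is handled the same way as in the paper (derive~\eqref{e:pert_mod} pointwise, integrate, use $\gamma(0)=\gamma_x(0)=0$), and both you and the paper gloss over the passage from the pointwise identity to the mild-solution (Duhamel) formula; a fully rigorous version would verify that $v+\gamma\phi'-\gamma_x v$ has the regularity required for the variation-of-constants representation, using $\gamma\in C^1([0,\tau_{\max}),H^2)\cap C([0,\tau_{\max}),H^4)$ and $v\in C([0,\tau_{\max}),H^2)$.
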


Subtracting~\eqref{e:intgamma} from~\eqref{e:intv} we obtain the equation for the modulated perturbation,
\begin{align}
v(t)=\widetilde{S}(t)v_0+\int_0^t\widetilde{S}(t-s)\mathcal{N}(v(s),\gamma(s),\partial_t \gamma(s)) \de s+\gamma_x(t)v(t), \label{e:intv2}
\end{align}
which holds for $t \in [0,\tau_{\max})$. Notice that those terms exhibiting the slowest temporal decay in~\eqref{e:intv} are canceled out in~\eqref{e:intv2} by our choice of $\gamma(t)$. Indeed, by Lemmas~\ref{L:unmod_bd} and~\ref{L:mod_bd}, the component $\widetilde{S}(t)$ exhibits decay at rate $(1+t)^{-3/4}$, which is faster than the (diffusive) decay at rate $(1+t)^{-1/4}$ of the full semigroup $\re^{\mathcal A[\phi] t}$. Moreover, the nonlinear residual $\mathcal{N}$ depends on derivatives of $\gamma$ only, which, exploiting that $s_p(0) = 0$, satisfy
\begin{align}
\partial_x^\ell\partial_t^j \gamma(t)=\partial_x^\ell\partial_t^js_p(t)v_0+\int_0^t \partial_x^\ell\partial_t^js_p(t-s) \mathcal{N}(v(s),\gamma(s),\partial_t \gamma(s))\de s, \label{e:intgamma2}
\end{align}
for $\ell,j\in\mathbb{N}_0$ and $t \in [0,\tau_{\max})$. By Lemma~\ref{L:mod_bd} the operators $\partial_x^\ell\partial_t^j s_p(t)$ also exhibit fast decay at rate $(1+t)^{-3/4}$ for $\ell + j \geq 1$. Therefore, one could try to close a nonlinear iteration scheme consisting of~\eqref{e:intv2} and~\eqref{e:intgamma2}. This requires control over the spatial derivatives of $v$ appearing in the nonlinearity $\mathcal{N}$, which we establish in the upcoming subsection.

\begin{remark}\label{rem:cut-off}
{ \upshape
The above analysis stresses the importance of the temporal cut-off function $\chi(t)$ in the decomposition~\eqref{e:semidecomp2} of the critical, algebraically decaying, part $S_c(t)$ of the semigroup $\re^{\mathcal A[\phi] t}$. Indeed, due to our choice of $\chi(t)$, the function $\gamma(t)$ is identically zero on $[0,1]$ so that the initial conditions of the modulated and unmodulated perturbation are compatible, cf.~\eqref{e:intvt} and~\eqref{e:intv2}. In addition, taking the temporal derivative of~\eqref{e:intgamma} yields the contribution $s_p(0)\mathcal{N}(v(t),\gamma(t),\partial_t \gamma(t))$, which vanishes due to our choice of $\chi(t)$. This is crucial for obtaining sufficient regularity of $\partial_t \gamma(t)$, cf.~Proposition~\ref{p:loc_gamma}.
We emphasize that the introduction of the cut-off function $\chi(t)$ in~\eqref{e:semidecomp2} does not influence the temporal decay rates established in Lemma~\ref{L:mod_bd}. Indeed, the decay rates of $s_p(t)$ and $\widetilde{S}_c(t)$ are determined by their behavior for large $t$ (for which $\chi(t)$ equals $1$).
}\end{remark}

\subsection{Compensating the Loss of Derivatives} \label{sec:comp_der}

Our goal now is to close the nonlinear iteration scheme consisting of~\eqref{e:intv2} and~\eqref{e:intgamma2} by exploiting the fast temporal decay exhibited by the operators $\widetilde{S}(t)$ and $\partial_x^\ell\partial_t^js_p(t)$ for $\ell + j \geq 1$. As discussed in the introduction, the main obstruction to closing the scheme is the lack of control over the spatial derivatives $v_x$ and $v_{xx}$ occurring in the nonlinearity $\mathcal{N}$ in~\eqref{e:intv2} and~\eqref{e:intgamma2}. Naively, one would hope to control $v_x$ and $v_{xx}$ through their respective integral equations. However, simply differentiating~\eqref{e:intv2}, or~\eqref{e:pert_mod}, introduces third and fourth derivatives of $v$ in the nonlinearities, and thus does not resolve the issue. In addition, we were unable to establish a nonlinear damping estimate for the modulated perturbation equation \eqref{e:pert_mod}, which would provide control over higher-order derivatives of the modulated perturbation in terms of its lower-order derivatives, cf.~Remark~\ref{rem:damping2}. Instead, we address this loss of derivatives by following the approach developed in~\cite{RS18}.

The approach in~\cite{RS18} relies on four crucial observations. The first is that no loss of derivatives arises in the semilinear equations~\eqref{e:pert_un} and~\eqref{e:intvt} for the unmodulated perturbation, as $\widetilde{\mathcal{N}}(\vt)$ does not contain any derivatives of $\vt$. The second is that, using the mean value theorem, the derivatives $v_x$ and $v_{xx}$ of the modulated perturbation can be bounded in terms of the phase modulation $\gamma$, the unmodulated perturbation $\vt$ and their derivatives. Hence, by \emph{appending} the equation~\eqref{e:intvt} for the unmodulated perturbation $\vt$ to the nonlinear iteration scheme we can establish estimates on $\vt$ and its derivatives, and thus on $v_x$ and $v_{xx}$, without loosing derivatives of $v$ or $\vt$. However, the most critical behavior of the semigroup $\re^{\mathcal{A}[\phi]t}$ is not factored out in~\eqref{e:intvt}. Consequently, the estimates on $v_x$ and $v_{xx}$ will be tame. Therefore, it is important to avoid derivatives of $v$ at points where the nonlinearity is paired with the slowest decaying parts of the semigroup. Here, the third and fourth observation come into play: all spatial derivatives of $v$ in the nonlinearity $\mathcal{N}$ are paired with a spatial or temporal derivative of $\gamma$ and the slowest, algebraically decaying parts $s_p(t)$ and $\widetilde{S}_c(t)$ of the semigroup $\re^{\mathcal A[\phi]t}$ are smoothing, cf.~Lemma~\ref{L:mod_bd}. Therefore, whenever possible, we use the integration by parts identities established in Section~\ref{sec:parts} to move derivatives off $v$ onto $\gamma$, $s_p(t)$ or $\widetilde{S}_c(t)$.

\subsubsection{Integration by Parts}

We integrate by parts to get rid of spatial derivatives of $v$ in the algebraically decaying contributions
\begin{align} \label{e:algcont}
\widetilde{S}_c(t-s)\mathcal{N}(v(s),\gamma(s),\partial_t \gamma(s))~~{\rm and}~~\partial_x^\ell\partial_t^j s_p(t-s)\mathcal{N}(v(s),\gamma(s),\partial_t \gamma(s))
\end{align}
in~\eqref{e:intv2} and~\eqref{e:intgamma2}, respectively.  To this end, we decompose $\mathcal{N}$ as in~\eqref{e:decompN}, where $\mathcal{Q}$ contains no derivatives of $v$, and where $\partial_x \mathcal{R}$ is linear in $v$, $v_x$ and $v_{xx}$ and can be written as
\begin{align*}
\partial_x \mathcal{R}(v,\gamma,\gamma_t) =\mathcal{R}_1(\gamma,\gamma_t) v_{xx} + \mathcal{R}_2(\gamma,\gamma_t) v_x + \mathcal{R}_3(\gamma,\gamma_t)v + \mathcal{R}_4(\gamma).
\end{align*}
Using the integration by parts formulas in Section~\ref{sec:parts}, we establish the following  estimates.

\begin{lemma} \label{l:mod_RQnonl}
  Fix a constant $C > 0$. For all integers  $\ell,j$ with $0\leq\ell,j \leq 4$, the inequalities
\begin{align}
\label{e:mod_RQnonl}
\begin{split}
\left\|\mathcal{Q}(v,\gamma)\right\|_{L^1} &\lesssim \left\|v\right\|_{L^2}^2,\\
\left\|\partial_x^\ell\partial_t^j s_p(t)\left(\partial_x \mathcal{R}(v,\gamma,\gamma_t)\right)\right\|_{L^2} &\lesssim (1+t)^{-\frac{1}{4}-\frac{\ell+j}{2}} \left\|(\gamma_x,\gamma_t)\right\|_{H^2 \times H^1}
\left(\|v\|_{L^2} + \left\|\gamma_x\right\|_{L^2} \right), \\
\left\|\widetilde{S}_c(t)\left(\partial_x \mathcal{R}(v,\gamma,\gamma_t)\right)\right\|_{L^2} &\lesssim (1+t)^{-\frac{3}{4}}\left\|(\gamma_x,\gamma_t)\right\|_{H^2 \times H^1} \left(\|v\|_{L^2} + \left\|\gamma_x\right\|_{L^2} \right),
\end{split}
\end{align}
hold for $t \geq 0$, $v \in H^2(\R)$ and $(\gamma,\gamma_t) \in H^3(\R) \times H^1(\R)$ satisfying $\|v\|_{H^1} \leq C$ and $\|\gamma\|_{H^3} \leq \frac{1}{2}$.
\end{lemma}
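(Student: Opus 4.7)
My plan is to reduce all three inequalities in~\eqref{e:mod_RQnonl} to direct applications of the semigroup estimates in Lemmas~\ref{L:mod_bd} and~\ref{L:mod_bd2}, after an explicit algebraic decomposition of $\mathcal{Q}$ and $\partial_x\mathcal{R}$.

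The $\mathcal{Q}$-bound is essentially immediate. The hypotheses $\|\gamma\|_{H^3}\le 1/2$ and $\|v\|_{H^1}\le C$, combined with the embeddings $H^2(\R)\hookrightarrow L^\infty(\R)$ and $H^1(\R)\hookrightarrow L^\infty(\R)$, give uniform control over $\|\gamma_x\|_{L^\infty}$ and $\|v\|_{L^\infty}$. Writing $\mathcal{Q}(v,\gamma)=(1-\gamma_x)\mathcal{J}[\text{(quadratic in }v)\cdot\phi+|v|^2v]$, Cauchy--Schwarz applied to the quadratic piece together with the bound $\||v|^3\|_{L^1}\le\|v\|_{L^\infty}\|v\|_{L^2}^2$ for the cubic piece, and the smoothness and boundedness of $\phi$, yield $\|\mathcal{Q}(v,\gamma)\|_{L^1}\lesssim\|v\|_{L^2}^2$.

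For the $\partial_x\mathcal{R}$-estimates, I first differentiate the defining formula of $\mathcal{R}$ to make the decomposition
\[
\partial_x\mathcal{R}(v,\gamma,\gamma_t)=\mathcal{R}_1(\gamma,\gamma_t)v_{xx}+\mathcal{R}_2(\gamma,\gamma_t)v_x+\mathcal{R}_3(\gamma,\gamma_t)v+\mathcal{R}_4(\gamma)
\]
explicit. A quick computation shows that $\mathcal{R}_1$ depends only on $\gamma_x$; $\mathcal{R}_2$ on $\gamma_x,\gamma_{xx}$ and $\gamma_t$; $\mathcal{R}_3$ on $\gamma_{xxx}$ and $\gamma_{xt}$; and $\mathcal{R}_4$ on $\gamma_x,\gamma_{xx}$ multiplied by the smooth periodic factors $\phi',\phi''$. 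The $L^\infty$-bound on $\gamma_x$ keeps $(1-\gamma_x)^{-1}$ and its derivatives under control, each derivative producing polynomial expressions in $\partial_x^k\gamma$ whose $L^2$-norms are absorbed into $\|\gamma_x\|_{H^2}$. I then apply the second pair of inequalities in Lemma~\ref{L:mod_bd2} to the $v_{xx}$-term with $f=\mathcal{R}_1$ and $g=v$, obtaining the stated temporal decay rates multiplied by $\|\mathcal{R}_1 v\|_{L^1}+\|\partial_x\mathcal{R}_1\cdot v\|_{L^1}+\|\partial_x^2\mathcal{R}_1\cdot v\|_{L^1}$, each of which is bounded via Cauchy--Schwarz by $\|\gamma_x\|_{H^2}\|v\|_{L^2}$. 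The $v_x$-term is treated analogously with the first pair of inequalities in Lemma~\ref{L:mod_bd2} for $f=\mathcal{R}_2$, the requisite $L^1$-norms being bounded by $(\|\gamma_x\|_{H^2}+\|\gamma_t\|_{H^1})\|v\|_{L^2}$. Finally, the $\mathcal{R}_3v$- and $\mathcal{R}_4$-contributions contain no derivatives of $v$, so Lemma~\ref{L:mod_bd} applied directly to $\partial_x^\ell\partial_t^j s_p(t)$ and $\widetilde{S}_c(t)$ in the $L^1\to L^2$-regime suffices, with $L^1$-inputs bounded respectively by $\|(\gamma_x,\gamma_t)\|_{H^2\times H^1}\|v\|_{L^2}$ and $\|(\gamma_x,\gamma_t)\|_{H^2\times H^1}\|\gamma_x\|_{L^2}$ via Cauchy--Schwarz and boundedness of $\phi',\phi''$.

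The main (and really only) obstacle is the bookkeeping: one must verify that after expanding each coefficient $\mathcal{R}_k$ together with its first two spatial derivatives (as required by Lemma~\ref{L:mod_bd2}), no $\gamma$-derivative of order greater than three and no $\gamma_t$-derivative of order greater than one appears, so that the right-hand side of~\eqref{e:mod_RQnonl} indeed controls everything. The condition $\|\gamma\|_{H^3}\le 1/2$ is precisely what keeps the rational factor $(1-\gamma_x)^{-1}$ uniformly bounded in $L^\infty$ and all relevant $L^2$-norms of $\partial_x^k\gamma$ for $k\le 3$ available. Once this verification is carried out, summing the contributions yields~\eqref{e:mod_RQnonl}.
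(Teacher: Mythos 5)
Your proposal is correct and follows essentially the same route as the paper: bound $\mathcal{Q}$ in $L^1$ directly via the Sobolev embeddings and Cauchy--Schwarz, then, for the $\partial_x\mathcal{R}$ contributions, split according to the decomposition $\partial_x\mathcal{R}=\mathcal{R}_1 v_{xx}+\mathcal{R}_2 v_x+\mathcal{R}_3 v+\mathcal{R}_4$, establish $L^2$-control of $\partial_x^\ell\mathcal{R}_k$ for $k+\ell\le 3$ (giving the required $L^1$-bounds on products with $v$ by Cauchy--Schwarz), and invoke the integration-by-parts estimates of Lemma~\ref{L:mod_bd2} for the $\mathcal{R}_1v_{xx}$ and $\mathcal{R}_2 v_x$ terms together with Lemma~\ref{L:mod_bd} in the $L^1\to L^2$ regime for the $\mathcal{R}_3 v$ and $\mathcal{R}_4$ terms. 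The only difference is that you make the dependencies of the $\mathcal{R}_k$ and the term-by-term bookkeeping explicit, whereas the paper compresses this into a single displayed estimate on $\partial_x^\ell\mathcal{R}_k$.
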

\begin{proof}
The  first inequality in~\eqref{e:mod_RQnonl} is an immediate consequence of the embedding $H^1(\R) \hookrightarrow L^\infty(\R)$. Moreover, the same embedding also yields
\begin{align*}
\left\|\partial_x^\ell \mathcal{R}_k(\gamma,\gamma_t)\right\|_{L^2}, \left\|\mathcal{R}_4(\gamma)\right\|_{L^2} &\lesssim \left\|(\gamma_x,\gamma_t)\right\|_{H^2 \times H^1},
\end{align*}
for $k = 1,2,3$, nonnegative integers $\ell$ with $k+\ell\leq3$, and any $(\gamma,\gamma_t) \in H^3(\R) \times H^1(\R)$ satisfying $\|\gamma\|_{H^3} \leq \frac{1}{2}$. Now, the last two inequalities in~\eqref{e:mod_RQnonl} follow by applying the integration by parts formulas in Lemma~\ref{L:mod_bd2}.
\end{proof}

Note that the right-hand side of~\eqref{e:mod_RQnonl} does not depend on any derivative of the modulated perturbation $v$. Thus, we have addressed the loss of derivatives in the algebraically decaying contributions~\eqref{e:algcont}.

\subsubsection{Mean Value Inequalities}

In our forthcoming analysis, we need the following inequalities on the difference between the modulated and unmodulated perturbations.
\begin{lemma}[Mean Value Inequalities] \label{l:meanvalue}
For $\vt$ and $v$ given by Proposition~\ref{p:local_unmod} and Corollary~\ref{C:local_v}, respectively, the inequalities
\begin{align}
\begin{split}\label{e:meanvalue}
\left\|v(t) - \vt(t)\right\|_{L^2} &\leq \left(\|\phi'\|_{L^\infty} + \|\vt(t)\|_{H^2}\right) \|\gamma(t)\|_{L^2},\\
\left\|v_x(t) - \vt_x(t)\right\|_{L^2} &\leq \left(\|\phi'\|_{L^\infty} + \|\vt(t)\|_{H^2}\right)\|\gamma_x(t)\|_{L^2} + \left(\|\phi''\|_{L^\infty} + \|\vt(t)\|_{H^3}\right) \|\gamma(t)\|_{L^2},\\
\left\|v_{xx}(t) - \vt_{xx}(t)\right\|_{L^2} &\leq \left(\|\phi'\|_{L^\infty} + \|\vt(t)\|_{H^2}\right) \|\gamma_{xx}(t)\|_{L^2}  + \left(\|\phi'''\|_{L^\infty} + \|\vt(t)\|_{H^4}\right) \|\gamma(t)\|_{L^2}\\
&\qquad\qquad +\, \left(\|\phi''\|_{L^\infty} + \|\vt(t)\|_{H^3}\right) \|\gamma_x(t)\|_{L^2} \left(2+\|\gamma(t)\|_{H^2}\right),\end{split}
\end{align}
hold for all $t \in [0,\tau_{\max})$.
\end{lemma}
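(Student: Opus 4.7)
The plan is to derive each of the three inequalities directly from the pointwise defining identity $v(x,t) = \vt(x-\gamma(x,t),t) + \phi(x-\gamma(x,t)) - \phi(x)$ using the fundamental theorem of calculus and the one-dimensional Sobolev embedding $\|u\|_{L^\infty(\R)} \leq \|u\|_{H^1(\R)}$, which holds with constant $1$ on the line via the Gagliardo–Nirenberg inequality $\|u\|_{L^\infty}^2 \leq \|u\|_{L^2}\|u'\|_{L^2}$. The structure is mechanical: differentiate the defining identity as many times as needed, group the terms so that each summand is either a product of $(\gamma,\gamma_x,\gamma_{xx})$ with a supremum-bounded derivative of $\phi$ or $\vt$, or a composition difference to which I can apply the fundamental theorem of calculus.

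For the first inequality, I write
\[
v(x,t) - \vt(x,t) = \bigl[\vt(x-\gamma(x,t),t) - \vt(x,t)\bigr] + \bigl[\phi(x-\gamma(x,t)) - \phi(x)\bigr] = -\gamma(x,t)\int_0^1 \bigl[\vt_x(x-s\gamma(x,t),t) + \phi'(x-s\gamma(x,t))\bigr]\,ds,
\]
then take $L^2$-norm in $x$, pull out $\gamma(x,t)$, and bound the integrand pointwise by $\|\vt_x\|_{L^\infty}+\|\phi'\|_{L^\infty} \leq \|\vt\|_{H^2}+\|\phi'\|_{L^\infty}$ via the Sobolev embedding.

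For the second inequality, I differentiate the defining identity by the chain rule to obtain $v_x = (1-\gamma_x)[\vt_x(x-\gamma,t)+\phi'(x-\gamma)] - \phi'(x)$ and rearrange as
\[
v_x - \vt_x = -\gamma_x\bigl[\vt_x(x-\gamma,t)+\phi'(x-\gamma)\bigr] + \bigl[\vt_x(x-\gamma,t)-\vt_x(x,t)\bigr] + \bigl[\phi'(x-\gamma)-\phi'(x)\bigr].
\]
The first group contributes $(\|\phi'\|_{L^\infty}+\|\vt\|_{H^2})\|\gamma_x\|_{L^2}$; the last two groups, once rewritten with the fundamental theorem of calculus as above, give $(\|\phi''\|_{L^\infty}+\|\vt\|_{H^3})\|\gamma\|_{L^2}$ after the Sobolev embedding bound $\|\vt_{xx}\|_{L^\infty}\leq \|\vt\|_{H^3}$.

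For the third inequality, I differentiate once more to get
\[
v_{xx} = -\gamma_{xx}\bigl[\vt_x(x-\gamma,t)+\phi'(x-\gamma)\bigr] + (1-\gamma_x)^2\bigl[\vt_{xx}(x-\gamma,t)+\phi''(x-\gamma)\bigr],
\]
and use the algebraic rewriting $(1-\gamma_x)^2 = 1 - \gamma_x(2-\gamma_x)$ to split off the $\vt_{xx}(x,t)+\phi''(x)$ piece:
\[
v_{xx}-\vt_{xx} = -\gamma_{xx}\bigl[\vt_x(x-\gamma,t)+\phi'(x-\gamma)\bigr] - \gamma_x(2-\gamma_x)\bigl[\vt_{xx}(x-\gamma,t)+\phi''(x-\gamma)\bigr] + \bigl[\vt_{xx}(x-\gamma,t)-\vt_{xx}(x,t)\bigr] + \bigl[\phi''(x-\gamma)-\phi''(x)\bigr].
\]
Taking $L^2$-norms, the $\gamma_{xx}$ term gives $(\|\phi'\|_{L^\infty}+\|\vt\|_{H^2})\|\gamma_{xx}\|_{L^2}$; the $\gamma_x$ term gives $(\|\phi''\|_{L^\infty}+\|\vt\|_{H^3})\|\gamma_x\|_{L^2}(2+\|\gamma_x\|_{L^\infty})$, and finally $\|\gamma_x\|_{L^\infty}\leq \|\gamma_x\|_{H^1}\leq \|\gamma\|_{H^2}$ supplies the factor $2+\|\gamma\|_{H^2}$; the last two composition differences, handled via the fundamental theorem of calculus, contribute $(\|\phi'''\|_{L^\infty}+\|\vt\|_{H^4})\|\gamma\|_{L^2}$. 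There is no real obstacle here beyond keeping the algebra tidy and matching the exact constants in the statement; the regularity asserted in Propositions~\ref{p:local_unmod} and~\ref{p:gamma} ($\vt \in H^4$, $\gamma\in H^4$) and the smallness $\|\gamma\|_{H^3}\leq 1/2$ from the local theory ensure that all pointwise manipulations and chain rule computations used above are justified on $[0,\tau_{\max})$.
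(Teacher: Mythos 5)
Your proof is correct and follows essentially the same route as the paper's: both apply the mean value theorem (equivalently, the fundamental theorem of calculus) to the composition $\psi(x-\gamma(x,t),t)=\phi(x-\gamma)+\vt(x-\gamma,t)$, differentiate via the chain rule, and close with the Sobolev embedding $H^1(\R)\hookrightarrow L^\infty(\R)$, the only cosmetic difference being that the paper keeps $\psi$ intact in its pointwise bounds and substitutes $\psi=\phi+\vt$ only at the end, whereas you split $\phi$ and $\vt$ from the start. (One small slip: your displayed formula for $v_{xx}$ omits the $-\phi''(x)$ term, though your subsequent rearrangement of $v_{xx}-\vt_{xx}$ is consistent with the corrected expression.)
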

\begin{proof}
Recall that by ~\eqref{e:modulated} we have
\begin{align}
v(x,t) - \vt(x,t) = \psi(x-\gamma(x,t),t) - \psi(x,t), \label{e:diffpert}
\end{align}
for $x \in \R$ and $t \in [0,\tau_{\max})$. By applying the mean value theorem to~\eqref{e:diffpert}  we obtain the inequalities
\begin{align}
\begin{split}
\left|v(x,t) - \vt(x,t)\right| &\leq \|\psi_x(t)\|_{L^\infty} |\gamma(x,t)|,\\
\left|v_x(x,t) - \vt_x(x,t)\right| &\leq \|\psi_x(t)\|_{L^\infty} |\gamma_x(x,t)| + \|\psi_{xx}(t)\|_{L^\infty} |\gamma(x,t)|,\\
\left|v_{xx}(x,t) - \vt_{xx}(x,t)\right| &\leq \|\psi_x(t)\|_{L^\infty} |\gamma_{xx}(x,t)| + 2\|\psi_{xx}(t)\|_{L^\infty} |\gamma_x(x,t)|\\
&\qquad + \,\|\psi_{xx}(t)\|_{L^\infty} |\gamma_x(x,t)|^2+ \|\psi_{xxx}(t)\|_{L^\infty} |\gamma(x,t)|,
\end{split} \label{e:meanvalue_pointwise}
\end{align}
for $x \in \R$ and $t \in [0,\tau_{\max})$. Substituting $\psi(t) = \phi + \vt(t)$ in the above inequalities and using the embedding $H^1(\R) \hookrightarrow L^\infty(\R)$, yields the result.
\end{proof}

These mean value inequalities connecting the unmodulated perturbation $\vt$ to the modulated perturbation $v$ allow us to append the equation~\eqref{e:intvt} for $\vt$ to the integral system consisting of the equations~\eqref{e:intv2} and~\eqref{e:intgamma2} for $v$ and $\gamma$, and obtain a nonlinear iteration scheme in
\[
\partial_x^i v(t),\ \partial_x^j \partial_t \gamma(t),\ \partial_x^\ell \gamma(t), \ \partial_x^k \vt(t), \qquad 0\leq i,j\leq 2, \ 0\leq k,\ell\leq 4.
\]
We show in the next section that this nonlinear iteration scheme closes, which yields the proof of Theorem~\ref{t:Loc_NonLinStab}.

\begin{remark}\label{rem:regularity}
\upshape
The mean value inequalities~\eqref{e:meanvalue_pointwise} provide pointwise approximations of the spatial derivatives of the modulated perturbation $v$ by those of the unmodulated perturbation $\vt$. Bounding the right-hand side of~\eqref{e:meanvalue_pointwise} requires $L^\infty$-estimates on the first, second and third spatial derivatives of the perturbed solution $\psi(t) = \phi + \vt(t)$. Hence, any nonlinear iteration scheme exploiting the mean value inequalities~\eqref{e:meanvalue_pointwise} should provide control over the $L^\infty$-norm of $\vt_x,\vt_{xx}$ and $\vt_{xxx}$. In a Hilbertian framework, as ours, such control is given by the $H^4$-norm of $\vt$, $k = 4$ being the smallest integer for which the embedding $H^k(\R) \hookrightarrow W^{3,\infty}(\R)$ holds. This explains the choice $v_0 \in H^4(\R)$ in Theorem~\ref{t:Loc_NonLinStab}. We expect that it is possible to allow for less regular initial data in Theorem~\ref{t:Loc_NonLinStab}. However, the main purpose of this paper is to introduce a working methodology to establish nonlinear stability of steady $T$-periodic waves for the LLE rather than to obtain optimal regularity with respect to localized perturbations.
\end{remark}

\section{Nonlinear Stability Analysis} \label{sec:nonlinearstab}

In this section, we establish  the proof of our main result, Theorem~\ref{t:Loc_NonLinStab}, by applying the linear estimates obtained in~Section~\ref{sec:linear_est}
to the nonlinear iteration scheme consisting of the equations~\eqref{e:intvt},~\eqref{e:intv2},~\eqref{e:intgamma2} and the inequalities~\eqref{e:meanvalue} relating $\vt$, $v$, $\gamma$.

\begin{proof}[Proof of Theorem~\ref{t:Loc_NonLinStab}]
We close a nonlinear iteration scheme, controlling the unmodulated perturbation $\vt \colon [0,T_{\max}) \to H^4(\R)$, the phase modulation $\gamma \colon [0,\tau_{\max}) \to H^4(\R)$ and the modulated perturbation $v  \colon [0,\tau_{\max}) \to H^2(\R)$, all defined in Section~\ref{sec:nonl_it}. By Propositions~\ref{p:local_unmod} and~\ref{p:gamma} and Corollary~\ref{C:local_v}, the template function $\eta \colon [0,\tau_{\max}) \to \R$ given by\footnote{For the motivation behind the choice of temporal weights in the template function $\eta(t)$, we refer to Remark~\ref{rem:choice_eta} below.}
\begin{align*}
\eta(t) &= \sup_{0\leq s\leq t} \left[(1+s)^{\frac{3}{4}}\left(\|v(s)\|_{L^2} + \left\|\partial_x \gamma(s)\right\|_{H^3} + \left\|\partial_t \gamma(s)\right\|_{H^2}\right) + (1+s)^{\frac{1}{4}}\left(\|\vt(s)\|_{L^2} + \left\|\gamma(s)\right\|_{L^2}\right)\right.\\
&\qquad\phantom{\sup_{0\leq s\leq t}} \left. + \, (1+s)^{\frac{1}{8}}\left(\|\vt_x(s)\|_{L^2} + \left\|v_x\right\|_{L^2}\right) +
\|\vt_{xx}(s)\|_{L^2} + \left\|v_{xx}(s)\right\|_{L^2}\right.\\
&\qquad\phantom{\sup_{0\leq s\leq t}} \left. + \, (1+s)^{-\frac{1}{8}}\|\vt_{xxx}(s)\|_{L^2} + (1+s)^{-\frac{1}{4}}\|\vt_{xxxx}(s)\|_{L^2}\right]
\end{align*}
is continuous, positive and monotonically increasing. Moreover, if $\tau_{\max} < \infty$, then it holds
\begin{align}
 \lim_{t \uparrow \tau_{\max}} \eta(t) = \infty. \label{e:blowupeta}
\end{align}
Our approach to closing the nonlinear iteration scheme is to prove that there exist constants $B > 0$ and $C > 1$ such that for all $t \in [0,\tau_{\max})$ with $\eta(t) \leq B$ we have
  \begin{align}
    \eta(t) \leq C\left(E_0 + \eta(t)^2\right), \label{e:etaest}
  \end{align}
with $E_0$ defined in Theorem~\ref{t:Loc_NonLinStab}. Then, provided that $E_0 < \min\{\frac{1}{4C^2},\frac{B}{2C}\}$, it follows $\eta(t) \leq 2CE_0 \leq B$, for all $t \in [0,\tau_{\max})$, by applying continuous induction. Indeed, given that $\eta(s) \leq 2CE_0$ for each $s \in [0,t)$, it follows $\eta(t) \leq C\left(E_0 + 4C^2E_0^2\right) < 2CE_0$ by estimate~\eqref{e:etaest} and continuity of $\eta$. All in all, if~\eqref{e:etaest} holds, then we have $\eta(t) \leq 2CE_0$, for all $t \in [0,\tau_{\max})$, which shows that~\eqref{e:blowupeta} cannot occur. Consequently, it holds $\tau_{\max} = \infty$ and $\eta(t) \leq 2CE_0$ for all $t \geq 0$. Upon taking $\varepsilon = \min\{\frac{1}{4C^2},\frac{B}{2C}\} > 0$ and $M = 2C$ this yields the desired result.

It remains  to prove the key estimate~\eqref{e:etaest}. To this end, take $B = \frac{1}{2}$ and assume $t \in [0,\tau_{\max})$ is such that $\eta(t) \leq B$.
We begin by bounding the modulated perturbation $v(t)$ and the phase modulation $\gamma(t)$ via the integral equations \eqref{e:intv2} and~\eqref{e:intgamma2}, respectively.  Recalling from~\eqref{e:semdecomp} that $\widetilde{S}(t)=\widetilde{S}_c(t)+S_e(t)$, we control the contributions from the operators $\widetilde{S}_c(t)$ and $S_e(t)$ in~\eqref{e:intv2} separately.  To account for the $S_e(t)$-contribution in the convolution term of~\eqref{e:intv2}, note that Lemma~\ref{lem:mod_nonlL2} implies that
\begin{align*}
\left\|\mathcal{N}\left(v(s),\gamma(s),\partial_t \gamma(s)\right)\right\|_{L^2} &\lesssim \eta(s)^2 (1+s)^{-\frac{3}{4}},
\end{align*}
for $s \in [0,t]$, where we use $\eta(t) \leq B$.  Hence, applying Lemma~\ref{L:unmod_bd} we arrive at
\begin{align} \label{e:nlest1}
\left\|\int_0^t S_e(t-s) \mathcal{N}\left(v(s),\gamma(s),\partial_t \gamma(s)\right) \de s\right\|_{L^2} \lesssim \int_0^t \frac{\eta(s)^2 \re^{-\mu(t-s)}}{(1+s)^{\frac{3}{4}}} \de s \lesssim \frac{\eta(t)^2}{\left(1+t\right)^{\frac{3}{4}}}.
\end{align}
To control the remaining terms in~\eqref{e:intv2}-\eqref{e:intgamma2} we note that Lemma~\ref{l:mod_RQnonl} implies
\begin{align*}
\left\|\mathcal{Q}\left(v(s),\gamma(s)\right)\right\|_{L^1} &\lesssim \eta(s)^2 (1+s)^{-\frac{3}{2}},\\
\left\|\partial_x^\ell\partial_t^j s_p(t-s)\left(\partial_x \mathcal{R}\left(v(s),\gamma(s),\partial_t \gamma(s)\right)\right)\right\|_{L^2} &\lesssim \eta(s)^2(1+t-s)^{-\frac{1}{4}-\frac{\ell+j}{2}} (1+s)^{-\frac{3}{2}},\\
\left\|\widetilde{S}_c(t-s)\left(\partial_x \mathcal{R}\left(v(s),\gamma(s),\partial_t \gamma(s)\right)\right)\right\|_{L^2} &\lesssim \eta(s)^2(1+t-s)^{-\frac{3}{4}}(1+s)^{-\frac{3}{2}},
\end{align*}
for $s \in [0,t]$ and $\ell,j \in \mathbb{N}_0$ with $\ell,j \leq 4$, where we use $\eta(t) \leq B$. So, applying Lemma~\ref{L:mod_bd} and recalling~\eqref{e:decompN} we establish that
\begin{align}
\begin{split}
\left\|\int_0^t s_p(t-s) \mathcal{N}\left(v(s),\gamma(s),\partial_t \gamma(s)\right) \de s\right\|_{L^2} &\lesssim \int_0^t \frac{\eta(s)^2}{(1+t-s)^{\frac{1}{4}}(1+s)^{\frac{3}{2}}} \de s \lesssim \frac{\eta(t)^2}{\left(1+t\right)^{\frac{1}{4}}},\\
\left\|\int_0^t \partial_x^\ell\partial_t^j s_p(t-s) \mathcal{N}\left(v(s),\gamma(s),\partial_t \gamma(s)\right) \de s\right\|_{L^2} &\lesssim \int_0^t \frac{\eta(s)^2}{(1+t-s)^{\frac{3}{4}}(1+s)^{\frac{3}{2}}} \de s \lesssim \frac{\eta(t)^2}{\left(1+t\right)^{\frac{3}{4}}},\\
\left\|\int_0^t \widetilde{S}_c(t-s) \mathcal{N}\left(v(s),\gamma(s),\partial_t \gamma(s)\right) \de s\right\|_{L^2} &\lesssim \int_0^t \frac{\eta(s)^2}{(1+t-s)^{\frac{3}{4}}(1+s)^{\frac{3}{2}}} \de s \lesssim \frac{\eta(t)^2}{\left(1+t\right)^{\frac{3}{4}}},
\end{split} \label{e:nlest2}
\end{align}
for $\ell,j \in \mathbb{N}_0$ with $1 \leq \ell + 2j \leq 4$. Thus, using Lemma~\ref{L:mod_bd}, the decomposition~\eqref{e:semdecomp} of $\widetilde{S}(t)$ and estimates~\eqref{e:nlest1} and~\eqref{e:nlest2}, we bound the right-hand sides of~\eqref{e:intgamma},~\eqref{e:intv2} and~\eqref{e:intgamma2} and obtain
\begin{align} \label{e:nlest3}
\|\gamma(t)\|_{L^2} \lesssim \frac{E_0 + \eta(t)^2}{(1+t)^{\frac{1}{4}}}, \qquad \|v(t)\|_{L^2}, \left\|\partial_x \gamma(t)\right\|_{H^3}, \left\|\partial_t \gamma(t)\right\|_{H^2} \lesssim \frac{E_0 + \eta(t)^2}{(1+t)^{\frac{3}{4}}}.
\end{align}

It remains now to provide control over the $L^2$-norms of $v_x$ and $v_{xx}$.  To this end, we proceed with establishing estimates on the unmodulated perturbation $\vt(t)$ and its derivatives, with the goal
of then using the mean value inequalities in Lemma~\ref{l:meanvalue} to infer control on the derivatives of $v$.
An estimate on the $L^2$-norm of $\vt(t)$ follows readily by the mean value inequalities. Indeed, combining Lemma~\ref{l:meanvalue} with~\eqref{e:nlest3} yields
\begin{align} \label{e:nlest5}
\left\|\vt(t)\right\|_{L^2} &\lesssim \|v(t)\|_{L^2} + \left(\|\phi'\|_{L^{\infty}} + \|\vt(t)\|_{H^2}\right) \|\gamma(t)\|_{L^2} \lesssim \frac{E_0 + \eta(t)^2}{(1+t)^{\frac{1}{4}}},
\end{align}
where we use $\eta(t) \leq B$. Next, we establish a bound on the derivative $\vt_{xxxx}(t)$. To this end, note that  Lemma~\ref{lem:un_nonl} implies that
\begin{align} \label{e:nlest4}
\left\|\widetilde{\mathcal N}(\vt(s))\right\|_{L^1} &\lesssim \eta(s)^2(1+s)^{-\frac{1}{2}},  \qquad \left\|\widetilde{\mathcal N}(\vt(s))\right\|_{H^4} \lesssim \eta(s)^2(1+s)^{\frac{1}{8}},
\end{align}
for $s \in [0,t]$, where we use $\eta(t) \leq B$. Thus, differentiating~\eqref{e:intvt} four times with respect to $x$, and using Lemma~\ref{L:unmod_bd}, the decomposition~\eqref{e:semidecomp1}
of the semigroup $\re^{\mathcal{A}[\phi] t}$, and the estimates~\eqref{e:nlest4} we obtain the bound
\begin{align} \label{e:nlest6}
\begin{split}
\left\|\vt_{xxxx}(t)\right\|_{L^2} &\lesssim \left(\re^{-\mu t} + (1+t)^{-\frac{3}{4}}\right)E_0 + \int_0^t \frac{\eta(s)^2 (1+s)^{\frac{1}{8}}}{\re^{\mu(t-s)}} \de s + \int_0^t \frac{\eta(s)^2}{(1+t-s)^{\frac{1}{4}}(1+s)^{\frac{1}{2}}} \de s\\
&\lesssim \left(E_0 + \eta(t)^2\right)(1+t)^{\frac{1}{4}}.
\end{split}
\end{align}
Using the Gagliardo-Nirenberg inequality
\[
\left\|\partial_x^j \vt(t)\right\|_{L^2}\lesssim \left\|\partial_x^4\vt(t)\right\|_{L^2}^{j/4}\left\|\vt(t)\right\|_{L^2}^{1-j/4},\qquad j=1,2,3,
\]
to interpolate between~\eqref{e:nlest5} and~\eqref{e:nlest6}, we readily arrive at
\begin{align} \label{e:nlest7}
\begin{split}
\left\|\vt_{x}(t)\right\|_{L^2} &\lesssim \frac{E_0 + \eta(t)^2}{(1+t)^{\frac{1}{8}}}, \quad \left\|\vt_{xx}(t)\right\|_{L^2} \lesssim E_0 + \eta(t)^2, \quad \left\|\vt_{xxx}(t)\right\|_{L^2} \lesssim \left(E_0 + \eta(t)^2\right)(1+t)^{\frac{1}{8}}.
\end{split}
\end{align}

Subsequently, we employ the mean value inequalities in Lemma~\ref{l:meanvalue} to bound the derivatives of the modulated perturbation $v(t)$ in terms of
derivatives of the unmodulated perturbation $\vt(t)$. Specifically, combining the bounds~\eqref{e:meanvalue} with the estimates~\eqref{e:nlest3} and~\eqref{e:nlest7}, we obtain
\begin{align} \label{e:nlest8}
\begin{split}
\left\|v_x(t)\right\|_{L^2} &\lesssim \left\|\vt_x(t)\right\|_{L^2} + \left(\|\phi\|_{W^{2,\infty}} + \|\vt(t)\|_{H^3}\right)\|\gamma(t)\|_{H^1} \lesssim \frac{E_0 + \eta(t)^2}{(1+t)^{\frac{1}{8}}},\\
\left\|v_{xx}(t)\right\|_{L^2} &\lesssim \left\|\vt_{xx}(t)\right\|_{L^2} + \left(\|\phi\|_{W^{3,\infty}} + \|\vt(t)\|_{H^4}\right) \|\gamma(t)\|_{H^2} \lesssim E_0 + \eta(t)^2,
\end{split}
\end{align}
where we use $\eta(t) \leq B$.

Finally, by estimates~\eqref{e:nlest3},~\eqref{e:nlest5},~\eqref{e:nlest6},~\eqref{e:nlest7} and~\eqref{e:nlest8} it follows that there exists a constant $C > 1$, which is independent of $E_0$ and $t$, such that the key inequality~\eqref{e:etaest} is satisfied, which, as discussed previously, completes the proof of Theorem~\ref{t:Loc_NonLinStab}.
\end{proof}

\begin{remark}\label{rem:choice_eta}
\upshape
The choice of temporal weights in the template function $\eta(t)$ used in the proof of Theorem~\ref{t:Loc_NonLinStab} can be motivated as follows.
First, the weights applied to the terms in $\eta(t)$ involving $\|v(t)\|_{L^2}$, $\|\vt(t)\|_{L^2}$, $\|\gamma(t)\|_{L^2}$, $\|\partial_t \gamma(t)\|_{H^2}$ and $\|\partial_x \gamma(t)\|_{H^3}$ are given by the linear theory. Indeed, Lemmas~\ref{L:unmod_bd} and~\ref{L:mod_bd} imply that the linear term $\widetilde{S}(t)v_0$ in the integral equation~\eqref{e:intv2} for $v(t)$ exhibits $\left(1+t\right)^{-3/4}$-decay, whereas the linear terms $\re^{\mathcal A[\phi] t} v_0$ and $\partial_x^\ell \partial_t^j s_p(t)$ in the integral equations~\eqref{e:intvt} and~\eqref{e:intgamma2} for $\vt(t)$ and $\gamma(t)$ exhibit decay at rates $(1+t)^{-1/4}$ and $(1+t)^{-1/4 - (\ell+j)/2}$, respectively.
Next, the temporal weight applied to the contribution $\|\vt_{xxxx}(t)\|_{L^2}$ in $\eta(t)$ arises by bounding the most critical nonlinear term in the integral equation~\eqref{e:intvt} for $\vt(t)$, which, as outlined in Remark~\ref{rem:fail}, grows at rate $(1+t)^{1/4}$. Finally, the weights applied to $\|\vt_{x}(t)\|_{L^2}, \|\vt_{xx}(t)\|_{L^2}$ and $\|\vt_{xxx}(t)\|_{L^2}$ arise by interpolation, whereas the weights applied to $\|v_x(t)\|_{L^2}$ and $\|v_{xx}(t)\|_{L^2}$ are directly linked to those applied to $\|\vt_{x}(t)\|_{L^2}$ and $\|\vt_{xx}(t)\|_{L^2}$.
In particular, while the bounds stated in Theorem~\ref{t:Loc_NonLinStab} are sharp, the above proof yields additional $L^2$-bounds on the derivatives of $v$ which are \emph{not} expected to be sharp. Indeed, their estimates rely on tame estimates on the unmodulated perturbation $\vt$.
\end{remark}

\appendix
\renewcommand*{\thesection}{\Alph{section}}

\section{Nonlinear damping estimates for the unmodulated perturbation} \label{sec:alternative}

In this subsection, we establish nonlinear damping estimates of the form~\eqref{e:damping} for the unmodulated perturbation $\vt$ exploiting the fact that it satisfies the semilinear equation~\eqref{e:pert_un}. The damping estimates yield tame bounds on the derivative of $\vt$ and, thus, provide an alternative to the Duhamel-based estimates~\eqref{e:nlest6} and~\eqref{e:nlest7} in the proof of Theorem~\ref{t:Loc_NonLinStab}.

To control the $L^2$-norm of the $j$-th derivative of the unmodulated perturbation $\vt$, it makes sense to look at the energy $E_j(t) = \|\partial_x^j \vt(t)\|_{L^2}^2, j \in \mathbb{N}$. The relevant bilinear terms in $\partial_t E_j(t)$ are   
\begin{align*} \left\langle \mathcal A[\phi] \partial_x^j \vt,\partial_x^j \vt\right\rangle_{L^2} + \left\langle \partial_x^j \vt,\mathcal A[\phi] \partial_x^j \vt\right\rangle_{L^2} = -2E_j(t) + \left\langle M[\phi] \partial_x^j \vt,\partial_x^j \vt\right\rangle_{L^2},\end{align*}
where
\begin{align*}
M[\phi] =  2\left(\begin{array}{cc} -2\phi_r\phi_i & \phi_r^2 -\phi_i^2 \\ \phi_r^2 - \phi_i^2 & 2\phi_r \phi_i\end{array}\right),
\end{align*}
corresponds to the remaining symmetric part of the linear operator $\mathcal A[\phi]$. To remove the residual symmetric term, which is currently obstructing a damping estimate, we introduce the modified energy 
\begin{align*} \widetilde{E}_j(t) = \left\|\partial_x^j \vt(t)\right\|_{L^2}^2 - \frac{1}{2\beta} \left\langle \mathcal{J} M[\phi] \partial_x^{j-1} \vt(t), \partial_x^{j-1} \vt(t)\right\rangle_{L^2}.\end{align*}
We emphasize that $\widetilde{E}_j(t)$ still provides control over the $L^2$-norm of the $j$-th derivative of the unmodulated perturbation. Indeed, using Sobolev interpolation we obtain a constant $K > 0$ such that
\begin{align} \left\|\partial_x^j \vt(t)\right\|_{L^2}^2 \leq 2\widetilde{E}_j(t) + K\left\|\vt(t)\right\|_{L^2}^2. \label{e:NLdamp1}\end{align}
Denoting
\begin{align*} 
B[\phi] := \left(\begin{array}{cc} 3\phi_{r}^2 + \phi_{i}^2 & 2\phi_{r}\phi_{i} \\
   2\phi_{r}\phi_{i} & \phi_{r}^2 + 3\phi_{i}^2\end{array}\right),
\end{align*}
we then find
\begin{align*}
\partial_t \widetilde E_j(t) = -2\widetilde E_j(t) + R_1(t) + R_2(t),
\end{align*}
where $R_1(t)$ contains all irrelevant bilinear terms
\begin{align*}
R_1(t) &= \frac{1}{\beta}\left( \Re\left\langle \mathcal{J}M[\phi]\partial_x^{j-1}\left(I + \mathcal{J}(\alpha - B[\phi]))\vt(t)\right),\partial_x^{j-1} \vt(t)\right\rangle_{L^2}-\left\langle \mathcal{J} M[\phi] \partial_x^{j-1}\vt(t), \partial_x^{j-1} \vt(t)\right\rangle_{L^2}\right)\\ 
& \qquad + \, 2\Re\left(\left\langle \mathcal{J} \left(\partial_x^j \left(B[\phi]\vt(t)\right) - B[\phi]\partial_x^j\vt(t)\right),\partial_x^j \vt(t) \right\rangle_{L^2} -  \left\langle\left(\partial_x M[\phi]\right)\partial_x^{j-1} \vt(t), \partial_x^j \vt(t)\right\rangle_{L^2}\right),
\end{align*}
and $R_2(t)$ is the nonlinear residual
\begin{align*}
R_2(t) = 2\Re\left(\left\langle \partial_x^j \mathcal{N}(\vt(t)),\partial_x^j \vt(t)\right\rangle_{L^2} - \frac{1}{2\beta}\left\langle \mathcal{J}M[\phi]\partial_x^{j-1} \mathcal{N}(\vt(t)),\partial_x^{j-1} \vt(t)\right\rangle_{L^2}\right).
\end{align*}
The irrelevant bilinear terms can be estimated with the aid of Sobolev interpolation and the Cauchy-Schwarz and Young inequalities as
\begin{align*} |R_1(t)| \leq \frac{1}{2}\left\|\partial_x^j \vt(t)\right\|_{L^2}^2 + C_1\left\|\vt(t)\right\|_{L^2}^2,\end{align*}
for some constant $C_1 > 0$. On the other hand, using Sobolev interpolation, the Cauchy-Schwarz and Young inequalities and the embedding $H^1(\R) \hookrightarrow L^\infty(\R)$, we find that the nonlinear residual enjoys the estimate
\begin{align*} |R_2(t)| \leq C_2\|\vt(t)\|_{H^j}\left(\left\|\partial_x^j\vt(t)\right\|_{L^2}^2 + \left\|\vt(t)\right\|_{L^2}^2\right),\end{align*}
for some constant $C_2 > 0$, as long as $\|\vt(t)\|_{H^j}$ is bounded. Hence, assuming $\|\vt(t)\|_{H^j}$ is sufficiently small, we obtain the desired nonlinear damping estimate
\begin{align*}
\partial_t \widetilde E_j(t) = -\widetilde E_j(t) + C\left\|\vt(t)\right\|_{L^2}^2,
\end{align*}
for some constant $C > 0$. Integrating the latter and using~\eqref{e:NLdamp1}, we arrive at
\begin{align}
\left\|\partial_x^j \vt(t)\right\|_{L^2}^2 &\leq 2\re^{-t}\widetilde{E}_j(0) + K\left\|\vt(t)\right\|_{L^2}^2 + 2C\int_0^t \re^{-(t-s)} \left\|\vt(s)\right\|_{L^2}^2 \de s. \label{e:finalNLdamp}
\end{align}
Note that, estimate~\eqref{e:finalNLdamp} for $j = 1,2,3,4$, coupled with \eqref{e:nlest5}, could be used to control the derivatives of the unmodulated perturbation in the proof of Theorem~\ref{t:Loc_NonLinStab}, replacing the estimates~\eqref{e:nlest6} and~\eqref{e:nlest7}. 

\begin{remark}\label{rem:damping2} {\upshape
While the above establishes a nonlinear damping estimate for the \emph{unmodulated} perturbation $\tilde{v}$, we emphasize again that
we were unable to establish such a nonlinear damping estimate for the \emph{modulated} perturbation $v$ by following the same strategy. 
The main reason is that the nonlinear term $\partial_x \mathcal R(v,\gamma,\gamma_t)$ in~\eqref{e:pert_mod} gives rise to terms in $\partial_t E_j(t)$, which are bilinear in $(\partial_x^{j+1} v(t),\partial_x^{j} v(t))$ and in $(\partial_x^{j+1} v(t),\partial_x^{j+1} v(t))$ (after integrating by parts). Although it turns out that the  terms, which are bilinear in $(\partial_x^{j+1} v(t),\partial_x^{j+1} v(t))$ cancel, we have not identified a reason why the same should hold for the terms, which are bilinear in $(\partial_x^{j+1} v(t),\partial_x^{j} v(t))$. More precisely, those terms are given by
\begin{align*}
-2\Re \left\langle \left(\partial_t \gamma(t)+\beta \mathcal{J}\partial_x \left(\frac{\gamma_x(t)}{1-\gamma_x(t)}\right)\right)\partial_x^{j+1} v(t),\partial_x^j v(t)\right\rangle_{L^2},
\end{align*}
and are a priori not controlled by the energy $E_j(t)$.  Whether a nonlinear damping estimate of the form \eqref{e:damping} exists for the modulated perturbation
remains an interesting open question.}
\end{remark}

\section{Local Theory for the Phase Modulation} \label{app:local}

The result in Proposition~\ref{p:gamma} is a consequence of the result in Proposition~\ref{p:loc_gamma} below. First, we prove the following preliminary result.

\begin{lemma} \label{L:loc_gamma}
For $\vt$ given by Proposition~\ref{p:local_unmod}, the mapping $V \colon H^2(\R) \times [0,T_{\max}) \to H^2(\R)$ given by
\begin{align*}V(\gamma,t)[x] &= \vt(x-\gamma(x),t) + \phi(x-\gamma(x)) - \phi(x),\end{align*}
is well-defined, continuous in $t$, and locally Lipschitz continuous in $\gamma$ (uniformly in $t$ on compact subintervals of $[0,T_{\max})$).
\end{lemma}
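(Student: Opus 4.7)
The plan is to address well-definedness, continuity in $t$, and local Lipschitz continuity in $\gamma$ in turn, using throughout the Sobolev embeddings $H^2(\R) \hookrightarrow W^{1,\infty}(\R)$ and $H^k(\R) \hookrightarrow C_0(\R)$ for $k \geq 1$. Since $\gamma \in H^2(\R)$ and $\vt(\cdot,t) \in H^4(\R)$, the functions $\gamma, \gamma'$ and $\vt, \vt_x, \vt_{xx}, \vt_{xxx}$ all lie in $L^\infty(\R)$, and $\gamma, \gamma'$ tend to zero at infinity. I write $G(x) := x - \gamma(x)$, so that $G \in C^1(\R)$ with $G'(x) = 1 - \gamma'(x)$, and I pick $R = R(\gamma) > 0$ large enough that $|\gamma'(x)| < \tfrac{1}{2}$ for $|x| > R$; then $G$ is a strictly increasing $C^1$-diffeomorphism on each of $(-\infty, -R)$ and $(R, \infty)$ onto its image.

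For well-definedness I will decompose $V(\gamma, t) = \vt(\cdot - \gamma, t) + [\phi(\cdot - \gamma) - \phi]$ and show each summand lies in $H^2(\R)$. The chain rule produces
\[
\partial_x\vt(x-\gamma(x),t) = \vt_x(G(x),t)(1-\gamma'(x)),
\]
\[
\partial_x^2\vt(x-\gamma(x),t) = \vt_{xx}(G(x),t)(1-\gamma'(x))^2 - \vt_x(G(x),t)\gamma''(x).
\]
On $\{|x| > R\}$, the change of variables $y = G(x)$ combined with $|G'| \geq \tfrac{1}{2}$ bounds the $L^2$-norms of the principal composition terms by $C(\gamma)\|\vt(\cdot,t)\|_{H^2}$; on the bounded region $[-R, R]$, the $L^\infty$-bounds on $\vt, \vt_x, \vt_{xx}$ render the integrand bounded and hence integrable on the finite interval; and the residual $\vt_x(G)\gamma''$ is controlled by $\|\vt_x\|_{L^\infty}\|\gamma''\|_{L^2}$. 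For the second summand, the identity
\[
\phi(x - \gamma(x)) - \phi(x) = -\gamma(x)\int_0^1 \phi'(x - \tau\gamma(x))\,\de\tau,
\]
combined with boundedness of $\phi^{(k)}$ for $k=0,1,2,3$ and $\gamma, \gamma', \gamma'' \in L^2(\R)$, places $\phi(\cdot - \gamma) - \phi$ in $H^2(\R)$ without any smallness restriction on $\gamma$.

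For continuity in $t$, only the first summand depends on $t$, and applying the composition estimate above to $\vt(\cdot, t_1) - \vt(\cdot, t_2)$ yields
\[
\left\|V(\gamma, t_1) - V(\gamma, t_2)\right\|_{H^2} \leq C(\gamma)\left\|\vt(\cdot, t_1) - \vt(\cdot, t_2)\right\|_{H^4},
\]
which tends to zero as $t_1 \to t_2$ by Proposition~\ref{p:local_unmod}. The $H^4$-norm on the right is what is needed so that $\vt_{xx} \in L^\infty$ via $H^3 \hookrightarrow L^\infty$ dominates the bounded-region contribution.

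For local Lipschitz continuity in $\gamma$, I fix $\gamma_0 \in H^2(\R)$, a small $r > 0$, and a compact subinterval $[0, T] \subset [0, T_{\max})$, and take $\gamma_1, \gamma_2 \in B(\gamma_0, r) \subset H^2(\R)$. With $\gamma_\tau := \tau\gamma_1 + (1-\tau)\gamma_2$ and $\psi := \phi + \vt$, the pointwise identity
\[
V(\gamma_1, t)[x] - V(\gamma_2, t)[x] = -\bigl(\gamma_1(x) - \gamma_2(x)\bigr)\int_0^1 \psi_x\bigl(x - \gamma_\tau(x), t\bigr)\,\de\tau
\]
reduces the Lipschitz estimate to composition bounds of the first step applied integrand-by-integrand. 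The main obstacle lies in differentiating twice in $x$: the worst-case terms are $\psi_{xxx}(\cdot - \gamma_\tau)(\gamma_1 - \gamma_2)$ and $\psi_x(\cdot - \gamma_\tau)\partial_x^2(\gamma_1 - \gamma_2)$, whose $L^2$-control requires $\psi_{xxx} \in L^\infty$. This is exactly what the hypothesis $\vt(\cdot, t) \in H^4(\R)$ supplies, and it is the technical reason for the regularity requirement $v_0 \in H^4(\R)$ in Theorem~\ref{t:Loc_NonLinStab}. Combining these ingredients gives
\[
\left\|V(\gamma_1, t) - V(\gamma_2, t)\right\|_{H^2} \leq C\bigl(\|\gamma_0\|_{H^2}, r, \sup_{t \in [0, T]}\|\vt(\cdot, t)\|_{H^4}\bigr)\left\|\gamma_1 - \gamma_2\right\|_{H^2},
\]
uniformly for $t \in [0, T]$, which is the desired local Lipschitz bound.
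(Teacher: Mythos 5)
Your proof is correct, but organizes the three properties differently from the paper and uses a different technique for one of them. The paper's proof proves the Lipschitz estimate \emph{first}, via the mean value theorem together with the embedding $H^4(\R)\hookrightarrow C^3_b(\R)$ (so that $\|\vt(t)+\phi\|_{W^{3,\infty}}$ is finite and continuous in $t$), obtaining
\[
\|V(\gamma_1,t)-V(\gamma_2,t)\|_{H^2}\;\lesssim\;\|\vt(t)+\phi\|_{W^{3,\infty}}\,\|\gamma_1-\gamma_2\|_{H^2}.
\]
It then deduces well-definedness essentially for free by taking $\gamma_2=0$ and noting $V(0,t)=\vt(t)\in H^2(\R)$, and proves continuity in $t$ by an analogous mean-value comparison of $V(\gamma,t)-V(\gamma,s)$ against $V(0,t)-V(0,s)$. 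Your argument instead establishes well-definedness directly through a chain-rule computation combined with a change of variables on the tails $\{|x|>R\}$ (where $|\gamma'|<\tfrac12$) and $L^\infty$-bounds on the bounded region $[-R,R]$; this is valid but more labor than needed, since the paper's ``Lipschitz-then-evaluate-at-$\gamma_2=0$'' step sidesteps the entire diffeomorphism discussion. Your Lipschitz and continuity steps, by contrast, follow essentially the same mean-value-type route as the paper and correctly identify $\psi_{xxx}\in L^\infty$ (hence $v_0\in H^4$) as the regularity bottleneck. One small imprecision: you write ``$H^3\hookrightarrow L^\infty$'' when controlling $\vt_{xx}$; what you mean is $H^1(\R)\hookrightarrow L^\infty(\R)$ applied to $\vt_{xx}$ (equivalently $H^3(\R)\hookrightarrow W^{2,\infty}(\R)$), and for the continuity-in-$t$ bound the $H^3$-norm would actually suffice, though bounding by $H^4$ is of course harmless.
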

\begin{proof}
First, we note the embedding $H^4(\R) \hookrightarrow C_{b}^3(\R)$ implies that
\begin{align} \vt \in C\big([0,T_{\max}),C_b^3(\R)\big), \label{e:regv3}\end{align}
where $C_b^3(\R)$ denotes the space of $3$-times differentiable functions, whose derivatives are continuous and bounded. Therefore, the mean value theorem yields
\begin{align}\label{e:estV1}
\begin{split}
\|V(\gamma_1,t) - V(\gamma_2,t)\|_{H^2} &\lesssim \|\vt(t) + \phi\|_{W^{3,\infty}} \|\gamma_1 - \gamma_2\|_{H^2},
\end{split}
\end{align}
for $\gamma_{1,2} \in H^2(\R)$ and $t \in [0,T_{\max})$. Taking $\gamma_2 = 0$ in~\eqref{e:estV1} and noting that $V(0,t) = \vt(t) \in H^2(\R)$, shows that $V$ is well-defined. Moreover,~\eqref{e:regv3} and~\eqref{e:estV1} yield Lipschitz continuity of $V$ in $\gamma$ (uniformly in $t$ on compact subintervals of $[0,T_{\max})$).

Similarly as in~\eqref{e:estV1}, we employ the mean value theorem and~\eqref{e:regv3} to obtain
\begin{align*}
\|V(\gamma,t) - V(\gamma,s)\|_{H^{2}} &\lesssim \left\|\left(V(\gamma,t) - V(\gamma,s)\right) - \left(V(0,t) - V(0,s)\right)\right\|_{H^{2}} + \|V(0,t) - V(0,s)\|_{H^2}\\
 &\lesssim \|\vt(t) - \vt(s)\|_{W^{3,\infty}} \|\gamma\|_{H^2} + \|\vt(t) - \vt(s)\|_{H^2},
\end{align*}
for $\gamma \in H^2(\R)$ and $s,t \in [0,T_{\max})$. Continuity of $V$ with respect to $t$ now follows by~\eqref{e:regv} and~\eqref{e:regv3}.
\end{proof}

\begin{proposition} \label{p:loc_gamma}
For $\vt$ given by Proposition~\ref{p:local_unmod}, let $V \colon H^2(\R) \times [0,T_{\max}) \to H^2(\R)$ be the mapping in Lemma~\ref{L:loc_gamma}. Then, there exists a maximal time $\tau_{\max} \in (0,T_{\max}]$ such that the integral system
\begin{align} \label{e:intsysgamma}
\begin{split}
\gamma(t) &= s_p(t)v_0 + \int_0^t s_p(t-s)\mathcal{N}(V(\gamma(s),s),\gamma(s),\gamma_t(s))\de s,\\
\gamma_t(t) &= \partial_t s_p(t)v_0 + \int_0^t \partial_t s_p(t-s)\mathcal{N}(V(\gamma(s),s),\gamma(s),\gamma_t(s))\de s,
\end{split}
\end{align}
has a unique solution
\begin{align*} (\gamma,\gamma_t) \in C\big([0,\tau_{\max}),H^4(\R) \times H^2(\R)\big).\end{align*}
In addition, if $\tau_{\max} < T_{\max}$, then
\begin{align} \label{e:gammablowup2}
\lim_{t \uparrow \tau_{\max}} \left\|\left(\gamma,\gamma_t\right)\right\|_{H^4 \times H^2} = \infty,
\end{align}
holds. Finally,  $\gamma \in C^1\big([0,\tau_{\max}),H^2(\R)\big)$ and $\partial_t \gamma(t) = \gamma_t(t)$ for $t \in [0,\tau_{\max})$.
\end{proposition}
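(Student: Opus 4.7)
The plan is to establish Proposition~\ref{p:loc_gamma} via a Picard fixed-point argument in a suitable Banach space. For $\tau \in (0, T_{\max})$, set $Y_\tau := C([0,\tau], H^4(\R) \times H^2(\R))$ with the sup-norm, and define the nonlinear mapping $\Psi_\tau \colon Y_\tau \to Y_\tau$ whose two components are the right-hand sides of~\eqref{e:intsysgamma}, with $v(s)$ replaced everywhere by $V(\gamma(s), s)$ via Lemma~\ref{L:loc_gamma}. The first step is to verify that $\Psi_\tau$ maps a suitable ball $\mathcal{B}_R \subset Y_\tau$ into itself: applying Lemma~\ref{L:mod_bd} (and its Parseval-based proof) with $\ell+j \leq 4$ yields uniform-in-$t$ bounds $\|s_p(t)v_0\|_{H^4} + \|\partial_t s_p(t) v_0\|_{H^2} \lesssim \|v_0\|_{L^1 \cap L^2}$, controlling the linear terms by $E_0$. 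For the convolution terms, Lemma~\ref{L:loc_gamma} guarantees that $V(\gamma(s),s) \in H^2(\R)$ with norm controlled by $\|\vt\|_{C([0,\tau],H^2)} + L R$ for $(\gamma, \gamma_t) \in \mathcal{B}_R$, and Lemma~\ref{lem:mod_nonlL2} then bounds $\|\mathcal{N}(V(\gamma(s),s), \gamma(s), \gamma_t(s))\|_{L^2}$ uniformly on $[0,\tau]$ by a function of $R$. Since $s_p(t-s) \colon L^2 \to H^4$ and $\partial_t s_p(t-s) \colon L^2 \to H^2$ are uniformly bounded in $t - s \geq 0$ (again by Lemma~\ref{L:mod_bd}), the convolution contributions are controlled by $C(R)\tau$, so $\Psi_\tau(\mathcal{B}_R) \subseteq \mathcal{B}_R$ for $\tau$ sufficiently small relative to $R$ and $E_0$.

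The second step is to establish the contraction property. Given $(\gamma^i, \gamma_t^i) \in \mathcal{B}_R$ for $i=1,2$, Lemma~\ref{L:loc_gamma} furnishes the estimate $\|V(\gamma^1(s),s) - V(\gamma^2(s),s)\|_{H^2} \lesssim \|\gamma^1(s) - \gamma^2(s)\|_{H^2}$ uniformly on compact subintervals of $[0,T_{\max})$. Combined with a Lipschitz variant of Lemma~\ref{lem:mod_nonlL2}, derivable by expanding the polynomial structure of $\mathcal{N}$ and invoking the embedding $H^1(\R) \hookrightarrow L^\infty(\R)$, this controls the difference of the two nonlinear residuals in $L^2$ by the $Y_\tau$-norm of the difference of the arguments. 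Re-running the semigroup estimates yields a Lipschitz constant for $\Psi_\tau$ proportional to $\tau$, so for sufficiently small $\tau$ the map $\Psi_\tau$ is a strict contraction on $\mathcal{B}_R$. Banach's fixed-point theorem then produces a unique solution $(\gamma, \gamma_t) \in Y_\tau$ to~\eqref{e:intsysgamma}.

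The third step is a continuation argument to reach the maximal existence time. One iterates the contraction starting from any $t^\ast$ at which the solution has been constructed, observing that the length of the local existence interval depends only on $\|(\gamma(t^\ast), \gamma_t(t^\ast))\|_{H^4 \times H^2}$, $\|v_0\|_{L^1 \cap L^2}$, and a bound on $\|\vt\|_{C([0,t^\ast+1],H^4)}$ provided by Proposition~\ref{p:local_unmod}. Defining $\tau_{\max} \in (0,T_{\max}]$ as the supremum of existence times, either $\tau_{\max} = T_{\max}$, or else $\|(\gamma(t), \gamma_t(t))\|_{H^4 \times H^2}$ must blow up as $t \uparrow \tau_{\max}$; otherwise the uniform lower bound on the local existence interval would permit extension past $\tau_{\max}$, contradicting maximality and establishing~\eqref{e:gammablowup2}.

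It remains to identify $\partial_t \gamma$ with $\gamma_t$. Formally differentiating the first equation of~\eqref{e:intsysgamma}, the Leibniz rule produces the boundary contribution $s_p(0)\mathcal{N}(V(\gamma(t),t), \gamma(t), \gamma_t(t))$ together with the convolution against $\partial_t s_p(t-s)$. Crucially, because the cut-off function $\chi(t)$ vanishes identically on $[0,1]$, the operator $s_p(0)$ is zero, so the boundary term drops out and the right-hand side coincides with the defining expression for $\gamma_t(t)$. Justifying the differentiation under the integral as a map into $H^2(\R)$ follows from continuity of the integrand in the $H^2$-norm (secured via Lemma~\ref{L:mod_bd} with $j=1$, $\ell \leq 2$) together with dominated convergence, yielding $\gamma \in C^1([0,\tau_{\max}), H^2(\R))$ with $\partial_t \gamma = \gamma_t$. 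The main obstacle I expect is the careful bookkeeping in the Lipschitz estimates for the composition $V(\gamma(s),s)$ and its propagation through the polynomial nonlinearity $\mathcal{N}$, since one must simultaneously track the $H^4 \times H^2$ regularity needed to close the fixed-point argument while drawing on the $C_b^3$-regularity of $\vt + \phi$ implicit in Lemma~\ref{L:loc_gamma}; once this bookkeeping is in place, the remainder is a routine application of the contraction principle.
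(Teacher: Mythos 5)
Your proposal follows essentially the same route as the paper: a contraction mapping on $C([0,\tau], H^4\times H^2)$ built from the uniform boundedness of $s_p(t)\colon L^2\to H^4$ and $\partial_t s_p(t)\colon L^2\to H^2$ (Lemma~\ref{L:mod_bd}), the local Lipschitz continuity of $\gamma\mapsto V(\gamma,t)$ (Lemma~\ref{L:loc_gamma}) composed with the quadratic structure of $\mathcal N$ (Lemma~\ref{lem:mod_nonlL2}), followed by a continuation argument and the observation that $s_p(0)=0$ (from the cut-off $\chi$) kills the boundary term when differentiating the Duhamel integral. The paper simply cites the standard abstract results~\cite[Prop.~4.3.3]{CA98},~\cite[Thm.~6.1.4]{Pazy} for the first part; you spell them out, which is fine.

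One imprecision worth flagging in your continuation step: because~\eqref{e:intsysgamma} is a Volterra integral equation with memory and not an ODE, you cannot restart the iteration from the pointwise data $(\gamma(t^\ast),\gamma_t(t^\ast))$ alone. The extension map has to carry the history integral $\int_0^{t^\ast}s_p(t-s)\mathcal N(\cdots)\,\de s$, so the length of the extension interval depends on $\sup_{[0,t^\ast]}\|(\gamma,\gamma_t)\|_{H^4\times H^2}$ (and on $\|\vt\|_{C([0,t^\ast+\delta],H^4)}$), not merely on the norm at $t^\ast$. The paper's $\Psi_{t_0}$ is defined precisely to include this history term. Your final conclusion --- a uniform lower bound on the existence interval whenever $(\gamma,\gamma_t)$ remains bounded, hence the blow-up alternative~\eqref{e:gammablowup2} --- is still correct, but should be phrased in terms of the sup-norm over $[0,t^\ast]$.
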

\begin{proof}
First, the result in Lemma~\ref{L:mod_bd} implies that the operators $s_p(t) \colon L^2(\R) \to H^4(\R)$ and $\partial_t s_p(t) \colon L^2(\R) \to H^2(\R)$ are $t$-uniformly bounded and strongly continuous on $[0,\infty)$.
Next, recall that the nonlinearity $\mathcal{N}$ can be decomposed as in~\eqref{e:decompN}, where $\mathcal{Q}$ contains no derivatives of $v$ and $\partial_x \mathcal{R}$ is linear in $v$, $v_x$ and $v_{xx}$. Then, it follows from Lemmas~\ref{lem:mod_nonlL2} and~\ref{L:loc_gamma} that the nonlinear map $N \colon H^4(\R) \times H^2(\R) \times [0,T_{\max}) \to L^2(\R)$ given by
$$N(\gamma,\gamma_t,t) = \mathcal{N}(V(\gamma,t),\gamma,\gamma_t),$$
is well-defined, continuous in $t$, and locally Lipschitz continuous in $(\gamma,\gamma_t)$ (uniformly in $t$ on compact subintervals of $[0,\tau_{\max})$), where we used the inequalities
$$\left\|\partial_x^\ell f \cdot \partial_x^k g\right\|_{L^2} \leq \|f\|_{H^2}\|g\|_{H^4}, \qquad  0\leq k \leq 3, \ 0\leq l \leq 2,$$
to bound the $L^2$-norm of products for functions $f \in H^2(\R)$ and $g \in  H^4(\R)$.

Standard arguments, see for instance~\cite[Proposition 4.3.3]{CA98} or~\cite[Theorem 6.1.4]{Pazy}, now imply that there exist constants $R > 0$ and $\tau \in (0,T_{\max})$ such that $\Psi \colon C\big([0,\tau],B(R)\big) \to C\big([0,\tau],B(R)\big)$ given by
$$\Psi(\gamma,\gamma_t)[t] = \begin{pmatrix} s_p(t)v_0 \\ \partial_t s_p(t)v_0\end{pmatrix} + \int_0^t \begin{pmatrix} s_p(t-s)\mathcal{N}(V(\gamma(s),s),\gamma(s),\gamma_t(s)) \\
\partial_t s_p(t-s)\mathcal{N}(V(\gamma(s),s),\gamma(s),\gamma_t(s))\end{pmatrix} \de s,$$
is a well-defined contraction mapping, where $B(R)$ is the closed ball centered at the origin in $H^4(\R) \times H^2(\R)$ of radius $R$. Hence, by the Banach fixed point theorem, $\Psi$ admits a unique fixed point, which yields a unique solution $(\gamma,\gamma_t) \in C\big([0,\tau], H^4(\R) \times H^2(\R)\big)$ to~\eqref{e:intsysgamma}.
Letting $\tau_{\max} \in (0,T_{\max}]$ be the supremum of all such $\tau$, we obtain a maximally defined solution $(\gamma,\gamma_t) \in C\big([0,\tau_{\max}), H^4(\R) \times H^2(\R)\big)$ to~\eqref{e:intsysgamma}.

  Next, assume by contradiction that $\tau_{\max} < T_{\max}$ and~\eqref{e:gammablowup2} does not hold. Take $t_0 \in [0,\tau_{\max})$. Similarly as before, one proves that there exist constants $M,\delta > 0$, which are independent of $t_0$, such that $\Psi_{t_0} \colon C\big([t_0,t_0+\delta],B(M)\big) \to C\big([t_0,t_0+\delta],B(M)\big)$ given by
\begin{align*}
\Psi_{t_0}(\widetilde{\gamma},\widetilde{\gamma}_t)[t] &= \begin{pmatrix} s_p(t)v_0 \\ \partial_t s_p(t)v_0\end{pmatrix} + \int_0^{t_0} \begin{pmatrix} s_p(t-s)\mathcal{N}(V(\gamma(s),s),\gamma(s),\gamma_t(s)) \\
\partial_t s_p(t-s)\mathcal{N}(V(\gamma(s),s),\gamma(s),\gamma_t(s))\end{pmatrix} \de s\\
& \qquad + \int_{t_0}^t \begin{pmatrix} s_p(t-s)\mathcal{N}(V(\widetilde{\gamma}(s),s),\widetilde{\gamma}(s),\widetilde{\gamma}_t(s)) \\
\partial_t s_p(t-s)\mathcal{N}(V(\widetilde{\gamma}(s),s),\widetilde{\gamma}(s),\widetilde{\gamma}_t(s))\end{pmatrix} \de s,
\end{align*}
is a well-defined contraction mapping, which admits a unique fixed point $(\widetilde\gamma,\widetilde\gamma_t) \in C\big([t_0,t_0+\delta], H^4(\R) \times H^2(\R)\big)$. Setting $t_0 := \tau_{\max} - \delta/2$, it readily follows that $(\check{\gamma},\check{\gamma}_t) \in C\big([0,\tau_{\max} + \delta/2],H^4(\R) \times H^2(\R)\big)$ given by
\begin{align*}
 (\check{\gamma}(t),\check{\gamma}(t)) = \begin{cases} (\gamma(t),\gamma_t(t)), & t \in [0,\tau_{\max} - \frac{\delta}{2}], \\ (\widetilde\gamma(t),\widetilde\gamma_t(t)), & t \in [\tau_{\max} - \frac{\delta}{2},\tau_{\max} + \frac{\delta}{2}],
                     \end{cases}
\end{align*}
solves~\eqref{e:intsysgamma}, which contradicts the maximality of $\tau_{\max}$. We conclude that if $\tau_{\max} < T_{\max}$, then~\eqref{e:gammablowup2} must hold.

Finally, Lemma~\ref{L:mod_bd} readily implies that $\gamma(t)$ is differentiable on $[0,\tau_{\max})$ with $\partial_t \gamma(t) = \gamma_t(t)$, where we use $s_p(0) = 0$. This completes the proof.
\end{proof}

\bibliographystyle{abbrv}
\bibliography{LLE}

\end{document}